\newtheorem{theorem}{Theorem}[section]
\newtheorem{proposition}[theorem]{Proposition}
\newtheorem{corollary}[theorem]{Corollary}
\newtheorem{lemma}[theorem]{Lemma}
\newtheorem{remark}[theorem]{Remark}
\newtheorem{example}[theorem]{Example}
\numberwithin{equation}{section}
\newcommand\Sc{\c{S}}
\newcommand{\Id}{\mathrm{Id}}
\newcommand{\ab}{\mathrm{ab}}
\newcommand{\tr}{\mathrm{tr}}
\newcommand{\Q}{\mathbb{Q}}
\newcommand{\Z}{\mathbb{Z}}
\newcommand{\PSL}{\mathrm{PSL}}
\newcommand{\Stab}{\mathrm{Stab}}
\newcommand{\SL}{\mathrm{SL}}
\newcommand{\PGL}{\mathrm{PGL}}
\renewcommand{\qed}{ $\sqcup\!\!\!\!\sqcap$}
\begin{document}
\title{Embedding closed totally geodesic surfaces in Bianchi orbifolds}
\author{Junehyuk Jung and Alan W. Reid}

\address{Department of Mathematics, Texas A\&M University, College Station, TX 77845 USA}

\email{junehyuk@math.tamu.edu}

\address{Department of Mathematics, Rice University, Houston, TX 77005 USA}

\email{alan.reid@rice.edu}

\thanks{The first author is partially supported by Sloan Research Fellowship and by NSF grant DMS-1900993. The second author is partially supported by NSF grant DMS-1812397. The first author wishes to thank Matthew Young for many helpful discussions and Rice University for its hospitality whilst working on this paper. The second author wishes to thank Matthew Stover for useful conversations, and both authors wish to thank Peter Sarnak for countless helpful conversations on topics related to this paper.}

\begin{abstract}

We study embedding of closed totally geodesic hyperbolic 2-orbifolds in the Bianchi orbifolds $\mathbb{H}^3/\PSL(2,\mathcal{O}_d)$. Our main result  shows that there is a constant $c$ such that for $d$ large enough there
are at least $cd$ closed embedded totally geodesic hyperbolic 2-orbifolds. Moreover we provide a list which conjecturally consists of those $d$ for which $\mathbb{H}^3/\PSL(2,\mathcal{O}_d)$ does not contain a closed embedded totally geodesic hyperbolic 2-orbifold.
\end{abstract}

\maketitle

\section{Introduction}
\label{intro}

For a square-free integer $d>0$, let $\mathcal{O}_d$ denote the ring of integers of $\mathbb{Q}(\sqrt{-d})$, and let $\Gamma_d$ be the Bianchi group $\PSL(2,\mathcal{O}_d)$.  The Bianchi groups are arithmetic subgroups of $\PSL(2,\mathbb{C})$,
and their commensurability classes provide the totality of commensurability classes of non-cocompact arithmetic lattices in $\PSL(2,\mathbb{C})$.
The Bianchi groups and Bianchi orbifolds $\Omega_d = \mathbb{H}^3/\Gamma_d$ have been studied repeatedly over the years through their connections to number theory and automorphic forms, as well as in the context of the geometry and topology of $3$-dimensional manifolds (we discuss more of these connections in the context of our results below in \S \ref{context}).

This paper is concerned with splittings of Bianchi groups and certain subgroups of finite index that arise from {\em closed} embedded totally geodesic surfaces. Note that for convenience, throughout this paper, we will use the term surface to mean
a quotient of $\mathbb{H}^2$ by a discrete group of isometries, that may have torsion and may contain orientation-reversing elements.

The study of embedded totally geodesic surfaces in $\Omega_d$ again has a long history (see \S \ref{context}), but perhaps most pertinent to the results of this paper is the result of Frohman and Fine \cite{FF} that says $\Gamma_d$ splits as an HNN-extension over $\PSL(2,\mathbb{Z})$ for all $d\neq 1,3$.
In the case of $d=1$,  $\Gamma_1$ splits as free product with amalgmation, with the amalgamated subgroup also being a copy of  $\PSL(2,\mathbb{Z})$ (see \cite{FF}).
Moreover, in all these cases, this splitting is geometric in the sense that it comes from an embedded totally geodesic copy of $\mathbb{H}^2/\PSL(2,\mathbb{Z})$ in $\Omega_d$. It is known that $\Gamma_3$ has Property FA \cite{Fi} and so does not admit any non-trivial splitting.

In addition, it is also shown in \cite{FF} that for $d\neq 1,2,3,7,11$, $\PSL(2,\mathcal{O}_d)$ splits as a free product with amalgamatation, where the
amalgamating subgroup is {\em isomorphic} to a cocompact Fuchsian group of signature $(0,2,2,3,3)$. This subgroup arises from a $2$-dimensional orbifold embedded in $\Omega_d$ obtained from combining two copies of
$\mathbb{H}^2/\PSL(2,\mathbb{Z})$
and produces an accidental parabolic element. In particular the $2$-orbifold is not totally geodesic.

Our first main result is in some sense a combination of these two results. Note, as is well-known (and we describe in detail in \S \ref{parameter}), $\Omega_d$ contains infinitely many {\em immersed} closed totally geodesic surfaces.

\begin{theorem}\label{main}
Let $d$ be as above. There exists a constant $c>0$ such that for all sufficiently large $d$, the Bianchi orbifold $\Omega_d$ contains at least $cd$ and at most $\ll_\epsilon d^{1+\epsilon} $  embedded closed totally geodesic surfaces.\footnote{Here and elsewhere, $A=O_\phi(B)$, or $A\ll_\phi B$ means that there exists a constant $C(\phi)$ depending only on $\phi$ such that $|A| \leq C(\phi) B$.} When $d$ is a prime $\equiv 3\pmod{4}$, the upper bound can be improved to $\ll d$, and so there are infinitely many $d$ where the lower bound is sharp in the order of magnitude. For $d \in \mathcal{E}_1$, where $\mathcal{E}_1$ is given in the appendix, $\Omega_d$ does not contain any embedded closed totally geodesic surface.
\end{theorem}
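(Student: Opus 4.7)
The plan is to use the arithmetic parameterization of closed totally geodesic surfaces in $\Omega_d$ set up in \S\ref{parameter}. An immersed closed totally geodesic surface in $\Omega_d$ corresponds to an indefinite quaternion division algebra $B/\mathbb{Q}$ admitting an embedding into $M_2(\mathbb{Q}(\sqrt{-d}))$, together with an Eichler order and a $\Gamma_d$-orbit of covering plane $P \subset \mathbb{H}^3$; local theory shows that the embeddability of $B$ is equivalent to each prime dividing $\mathrm{disc}(B)$ being inert or ramified in $\mathbb{Q}(\sqrt{-d})$. Covering planes for a given $B$ can be encoded by integral Hermitian forms over $\mathcal{O}_d$ up to $\Gamma_d$-equivalence. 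The immersed surface associated to $P$ is embedded precisely when $\gamma P \cap P \in \{P, \emptyset\}$ for every $\gamma \in \Gamma_d$, which translates into a concrete arithmetic non-intersection condition on the associated Hermitian form.

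For the lower bound of at least $cd$ embedded surfaces, I would combine contributions from many admissible $B$ with the growth of $\Gamma_d$-orbits within each. The number of admissible quaternion algebras $B$ with $\mathrm{disc}(B)$ below a suitable bound depends on how many primes are non-split in $\mathbb{Q}(\sqrt{-d})$ (which, via Chebotarev-type estimates, contributes polynomially in $d$), and within each admissible $B$ the number of $\Gamma_d$-orbits of covering planes grows via class-group-type counts on $\mathcal{O}_d$. The non-intersection condition of the embedding criterion is automatic (or easily verified) for planes with sufficiently large minimal translation length, and one shows that a positive proportion of the constructed family meets this requirement, producing at least $cd$ distinct embedded surfaces for all sufficiently large $d$.

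For the upper bound $\ll_\epsilon d^{1+\epsilon}$, an embedded totally geodesic surface has area essentially bounded by $d$ via an injectivity-radius and volume comparison internal to $\Omega_d$, forcing $\mathrm{disc}(B) \ll d$. Summing over admissible $B$ with $\mathrm{disc}(B) \ll d$ together with the number of $\Gamma_d$-orbits of covering planes per $B$ yields the bound $d^{1+\epsilon}$ from standard class-number estimates. When $d \equiv 3 \pmod 4$ is prime, the sparser splitting of rational primes in $\mathbb{Q}(\sqrt{-d})$ eliminates the divisor-type factor and improves the bound to $\ll d$. For the exceptional set $\mathcal{E}_1$, I would enumerate the (finite) list of candidate Hermitian forms whose discriminant satisfies the embeddability local conditions and verify case-by-case that each immersed candidate fails the non-intersection criterion. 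The main technical obstacle is expected to be the lower bound: extracting the full linear-in-$d$ count, rather than the $d/\log d$ obtained by merely counting admissible discriminants, requires careful use of the class-group contribution within each $B$ together with a uniform verification of the embedding criterion across the constructed family.
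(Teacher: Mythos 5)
Your high-level framework (parameterize immersed surfaces by arithmetic data, then count those satisfying an embedding criterion) matches the paper's, but both halves of the count have genuine gaps. For the lower bound, the assertion that the non-intersection condition is ``automatic (or easily verified) for planes with sufficiently large minimal translation length'' is the crux of the whole theorem and is not substantiated; embeddedness is a strong and rare condition (the paper shows only $\ll_\epsilon d^{1+\epsilon}$ of the infinitely many immersed surfaces are embedded), and nothing about translation lengths forces $|\mathrm{Tr}(\gamma^*A\gamma A^{-1})|\geq 2$ for \emph{all} $\gamma\in\Gamma_d$. The paper's mechanism is a congruence trick: take $A=\bigl(\begin{smallmatrix} d & m\sqrt{-d}\\ -m\sqrt{-d} & dc\end{smallmatrix}\bigr)$, so that $Q_A$ only represents multiples of $d$ and $\mathrm{Tr}(\gamma^*A\gamma\tilde A)\equiv -2D\pmod{d^2}$ with $4D<d^2$, which forces the trace condition for every $\gamma$. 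You also omit the closedness constraint entirely: one must produce $\gg d$ admissible discriminants that are squares mod $d$ but \emph{not} of the form $\alpha^2+d\beta^2$ (Lemma \ref{straight_line}), and establishing that this set is large enough is a serious analytic input in the paper (Bourgain--Lindenstrauss on small prime quadratic non-residues, Theorem \ref{lem:bou}, plus P\'olya--Vinogradov and inclusion--exclusion). Your ``Chebotarev-type estimates'' and ``class-group-type counts'' do not engage with either of these points.

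For the upper bound, the claim that embeddedness forces discriminant $\ll d$ is false: the embedded surfaces constructed for the lower bound have discriminant $D=d(m^2-dc)$, which ranges up to $d^2/4$. The correct constraint (Corollary \ref{para:cor} combined with Theorem \ref{upper}) is $D\ll \gcd(d,D)^2\log^4(D/\gcd(d,D))$, and the $d^{1+\epsilon}$ bound comes from summing $e\log^4 d\cdot 4^{\omega(e)}$ over $e\mid d$; the $\gcd$ refinement is essential and your volume heuristic misses it. Moreover, the injectivity-radius/equidistribution argument (the paper's ``folklore proof'') yields only finiteness, not a quantitative count — making it effective is precisely what the Hermitian-form minimum argument (local-global for quaternary forms plus the Jacobsthal function) replaces. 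Finally, for $\mathcal{E}_1$ your proposed case-by-case intersection check presupposes a finite list of candidates, which again requires the discriminant bound you have not correctly established; the paper instead invokes Vulakh's parametrization (valid since the class group has no element of order $4$ for $d\in\mathcal{E}_1$) to reduce to checking that the set $\mathcal{D}$ of \eqref{D} is empty.
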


In the appendix we describe a larger finite set $\mathcal{E} \supset \mathcal{E}_1$ that conjecturally provides the complete list of those $d$ for which $\Omega_d$ does not contain a closed embedded totally geodesic surface. As we discuss in \S \ref{context}, for some small values of $d$, there are topological obstructions to the existence of closed embedded totally geodesic surfaces in $\Omega_d$.

We now refine the discussion to certain subgroups of $\Gamma_d$.  Recall that a subgroup $\Gamma<\Gamma_d$ is called a {\em congruence subgroup} if there
exists an ideal $I\subset \mathcal{O}_d$ so that $\Gamma$ contains the
{\em principal congruence group}:
$$\Gamma_d(I)=\ker\{\Gamma_d\rightarrow \PSL(2,\mathcal{O}_d/I)\},$$
\noindent where $\PSL(2,\mathcal{O}_d/I) = \SL(2,\mathcal{O}_d/I)/\{\pm \Id\}$.
The largest ideal $I$ for which $\Gamma_d(I)<\Gamma$ is called the {\em level}
of $\Gamma$. A manifold $M$ is called {\em congruence} (resp. {\em principal congruence}) if $M$ is isometric to a manifold $\mathbb{H}^3/\Gamma$ where $\Gamma_d(I) < \Gamma < \Gamma_d$ (resp. $\Gamma=\Gamma_d(I)$)
for some ideal $I$.

As in the case of the Bianchi groups, the existence of embedded totally geodesic surfaces in congruence manifolds has been of some interest (see \S \ref{congruence} for more). However, it has become apparent that understanding embeddings of totally geodesic surfaces in congruence manifolds (for small values of $d$ in particular) is subtle, and depends on many things, amongst which is
the splitting behavior of rational primes in $\mathbb{Q}(\sqrt{-d})$ (see \S \ref{congruence}). To that end our next two results are further examples of this (see \S \ref{parameter}
for the definition of discriminant of a totally geodesic surface).

\begin{theorem}\label{main2}
Assume that $p=\mathcal{P}\overline{\mathcal{P}}$ is a rational prime split to $\mathbb{Z}[i]$ and $I\in \{\mathcal{P}, \overline{\mathcal{P}}\}$. Then
$\mathbb{H}^3/\Gamma_1(I)$ contains no closed embedded totally geodesic surfaces. The same statement holds for the prime ideal $<1+i>$.
\end{theorem}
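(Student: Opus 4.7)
The plan is to show that for every totally geodesic plane $H\subset\mathbb{H}^3$ whose $\Gamma_1$-stabilizer $F$ is cocompact Fuchsian, there exists $\gamma\in\Gamma_1(I)\setminus F$ with $\gamma H\cap H$ a transverse geodesic; this rules out $H/(F\cap\Gamma_1(I))$ being an embedded closed totally geodesic surface in $\mathbb{H}^3/\Gamma_1(I)$. A first observation is that because $\Gamma_1(I)$ is normal in $\Gamma_1$, this obstruction is invariant under the $\Gamma_1$-action on the set of planes: if $\gamma_0\in\Gamma_1(I)\setminus F$ obstructs $H$, then $g\gamma_0g^{-1}\in\Gamma_1(I)\setminus gFg^{-1}$ obstructs $gH$ for any $g\in\Gamma_1$. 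It therefore suffices to produce the obstructing element on a single convenient representative of each $\Gamma_1$-orbit of such planes.

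Using the parameterization of cocompact totally geodesic planes by integral Hermitian triples $(a,b,c)\in\mathbb{Z}\times\mathcal{O}_1\times\mathbb{Z}$ with $D=|b|^2-ac>0$ non-square (recalled in \S\ref{parameter}), I would work with a representative of each orbit whose $|a|$ is as small as possible, reducing via translations $T_w=\begin{pmatrix}1&w\\0&1\end{pmatrix}$ and the inversion $S=\begin{pmatrix}0&-1\\1&0\end{pmatrix}$; the classical reduction theory of indefinite binary Hermitian forms ensures that each orbit contains a representative with $|a|\le C\sqrt D$ for an absolute constant $C$, and in favorable cases the canonical representative is $(1,0,-D)$, for which $H$ is the hemisphere $|z|^2+y^2=D$ of Euclidean radius $\sqrt D$. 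The natural candidate for the obstructing element is then a parabolic translation $T_w\in\Gamma_1(I)$ with $w\in I\setminus\{0\}$: it lies outside $F$ and sends $H$ to the hemisphere $|z-w|^2+y^2=D$, which meets $H$ transversely precisely when $0<|w|<2\sqrt D$.

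For $I=\langle 1+i\rangle$ the element $w=1+i\in I$ has $|w|=\sqrt 2<2\sqrt D$ for every admissible $D\ge 2$, so the parabolic construction uniformly finishes the proof in the ramified case. For $I=\mathcal P$ with $N(\mathcal P)=p$, the shortest nonzero element of $I$ has length $\sqrt p$, so the parabolic argument succeeds when $D>p/4$ but fails in the regime of small discriminants $D\le p/4$ with large $p$, and this is the main obstacle. To handle this range I would exploit the splitting hypothesis: because $p$ splits in $\mathbb{Z}[i]$, the rational quaternion algebra $B/\mathbb{Q}$ underlying the Fuchsian group $F$ must be unramified at $p$ (otherwise $B\otimes\mathbb{Q}(i)$ would fail to split at the place of $\mathbb{Q}(i)$ above $p$, contradicting the fact that $F$ embeds into $\PSL(2,\mathbb{Z}[i])$), so $B\otimes\mathbb{Q}_p\cong M_2(\mathbb{Q}_p)$. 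This local matrix-algebra structure supplies elements of $F$ with prescribed reduction modulo $\mathcal{P}$, which combined with parabolic elements attached to other cusps of $\Gamma_1(\mathcal{P})$ (conjugates of translations at $\infty$ by coset representatives of $\Gamma_1/\Gamma_1(\mathcal{P})\cong\PSL(2,\mathbb{F}_p)$) should allow one to manufacture an element of $F\cdot\Gamma_1(\mathcal{P})\setminus F$ carrying $H$ to a transversely intersecting plane. Verifying that this interplay between the local splitting at $p$ and the cusp geometry of $\Gamma_1(\mathcal{P})$ produces an obstruction uniformly in $D$ is the step I expect to require the most care.
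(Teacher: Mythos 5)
Your overall strategy is the same as the paper's: produce $\gamma\in\Gamma_1(I)$ with $\gamma H\cap H$ a transverse geodesic, using parabolic translations when the circle's radius is large relative to $\sqrt{N(I)}$, and using the arithmetic of the Fuchsian stabilizer modulo $\mathcal{P}$ otherwise. However, there are two concrete gaps. First, the parabolic step is only as good as your control on the representative circle, and the generic reduction bound $|a|\le C\sqrt{D}$ you fall back on is useless here: it only gives radius $\sqrt{D}/|a|\ge 1/C$, so the condition $\sqrt{p}<2\sqrt{D}/|a|$ fails for all large $p$. What is actually needed (and what the paper uses, via the Maclachlan--Reid classification recalled as Theorem \ref{classi}) is that every $\Gamma_1$-orbit of circles has a representative with $a\in\{1,2\}$ --- equivalently, every primitive integral binary Hermitian form over $\mathbb{Z}[i]$ represents $1$ or $2$ (Remark \ref{hermitian_represent}). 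This sharp fact is special to $\mathbb{Z}[i]$ and is not a consequence of general reduction theory; without it your ``favorable cases'' hedge leaves even the $\langle 1+i\rangle$ case and the regime $D>p/4$ unproved. (With it, your translation-by-$(1+i)$ argument does give a clean alternative to the paper's topological proof of the ramified case via the Borromean rings and Menasco.) Note also that for the $a=2$ representatives the radius is $\sqrt{D}/2$, so your threshold for the parabolic step is really $D>p$, not $D>p/4$.

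Second, the complementary regime is only sketched, and the sketch elides the real difficulty. Knowing $B\otimes\mathbb{Q}_p\cong M_2(\mathbb{Q}_p)$ is not enough to get ``elements of $F$ with prescribed reduction modulo $\mathcal{P}$'': $F$ is the unit group of a specific order, and surjectivity of $F\to\PSL(2,\mathbb{Z}[i]/\mathcal{P})\cong\PSL(2,\mathbb{F}_p)$ requires that order to be maximal at $p$ plus strong approximation; this is exactly the paper's Lemma \ref{lem22}, valid when $p\nmid 2D$, and it genuinely fails in general when $p\mid D$. Your case division happens to be consistent with this (if the parabolic step fails then $D\le p$, and $D=p$ is excluded because $p$ is a sum of two squares so the surface would be non-compact by Lemma \ref{straight_line}, whence $p\nmid D$), but you have not observed this, and it is the linchpin making the two cases mesh. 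Finally, the mechanism for producing the obstructing element is not ``parabolics at other cusps'' but the double coset: since the image of $F$ in $\PSL(2,\mathbb{F}_p)$ is everything, $F\,T_1\,F$ meets $\Gamma_1(\mathcal{P})$, and any $f_1T_1f_2$ in this coset sends $H$ to $f_1T_1H$, which meets $H$ transversely because $T_1H$ meets $H$ (the radius exceeds $1/2$ by the classification, Lemma \ref{lem12}). As written, the proposal identifies the right two-pronged attack but leaves both prongs resting on unproved inputs.
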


\begin{theorem}\label{main3}
Let $n \in \mathbb{Z}$, then all immersed totally geodesic surfaces of discriminant $\leq \frac{n}{4}$ in $\mathbb{H}^3/\Gamma_d(n)$ are embedded.
\end{theorem}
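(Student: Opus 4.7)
The plan is to use the parametrization of immersed closed totally geodesic surfaces in $\Omega_d$ by $\Gamma_d$-orbits of integral Hermitian forms set up in \S \ref{parameter}. Each such surface corresponds to a Hermitian matrix $F=\begin{pmatrix} a & b \\ \bar b & c \end{pmatrix}$ with $a,c\in\Z$, $b\in\mathcal{O}_d$, and $Q(F):=|b|^{2}-ac>0$; the matrix $F$ cuts out a totally geodesic plane $P_F\subset\H^{3}$ whose $\Gamma_d$-stabilizer uniformizes the surface. The lifted surface in $\H^{3}/\Gamma_d(n)$ is embedded if and only if no two distinct $\Gamma_d(n)$-translates of $P_F$ meet inside $\H^{3}$.

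Suppose for contradiction that an immersed surface with discriminant $D\leq n/4$ is not embedded in $\H^{3}/\Gamma_d(n)$: then some $\gamma\in\Gamma_d(n)$ satisfies $\gamma P_F\neq P_F$ and $\gamma P_F\cap P_F\neq\emptyset$. Since $\pm\gamma$ act identically on Hermitian forms, we may lift $\gamma$ to $\SL(2,\mathcal{O}_d)$ with $\gamma\equiv I\pmod{n\mathcal{O}_d}$ and write $\gamma=I+nE$ for some $E\in M_{2}(\mathcal{O}_d)$. A direct calculation then gives $F':=(\gamma^{-1})^{*}F\gamma^{-1}=F+nG$ with $G=-(E^{*}F+FE)+nE^{*}FE$; an entry-by-entry check, using that $\mathrm{tr}_{K/\Q}$ and $N_{K/\Q}$ both map $\mathcal{O}_d$ into $\Z$ for $K=\Q(\sqrt{-d})$, shows that $G$ is Hermitian with diagonal entries in $\Z$ and off-diagonal entries in $\mathcal{O}_d$; in particular $Q(G):=|g_{12}|^{2}-g_{11}g_{22}\in\Z$.

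To finish, combine the invariance $Q(F')=Q(F)=Q$ with the polarization identity $Q(F+nG)=Q+2n\langle F,G\rangle+n^{2}Q(G)$ to get $\langle F,G\rangle=-\tfrac{n}{2}Q(G)$, and then the transverse intersection condition $\langle F,F'\rangle^{2}<Q(F)Q(F')$ for the two distinct spacelike vectors $F,F'$ whose associated planes meet inside $\H^{3}$. Substituting $\langle F,F'\rangle=Q+n\langle F,G\rangle=Q-\tfrac{n^{2}}{2}Q(G)$ gives the sandwich $0<Q(G)<4Q/n^{2}$. Under the discriminant normalization of \S \ref{parameter}, the hypothesis $D\leq n/4$ forces $Q(G)<1$, contradicting $Q(G)\in\Z_{>0}$ and proving the theorem. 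The main obstacle is reconciling the constant in the elementary bound $Q(G)<4Q/n^{2}$ with the paper's stated discriminant inequality $D\leq n/4$: this depends on the exact definition of ``discriminant'' adopted in \S \ref{parameter}, and may require a sharper integrality or divisibility statement for $Q(G)$ than the one used here.
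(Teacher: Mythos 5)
Your proof is correct, and your closing worry about the constant is unfounded: under $D\le n/4$ your sandwich gives $0<Q(G)<4D/n^{2}\le 1/n\le 1$, which already contradicts $Q(G)\in\Z_{>0}$. The overall strategy coincides with the paper's --- both arguments rest on the intersection criterion of Lemma \ref{lem:tr}/Corollary \ref{cor_embed} together with the fact that the relevant pairing is a rational integer constrained by a congruence --- but your key step is genuinely sharper. The paper simply observes that $\gamma^{*}A\gamma\equiv A\pmod{n}$ entrywise, hence $\mathrm{Tr}(\gamma^{*}A\gamma\tilde{A})\equiv\mathrm{Tr}(A\tilde{A})=-2D\pmod{n}$, and then $4D\le n$ pins this integer outside the open interval $(-2D,2D)$. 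You instead compute the deviation exactly: writing $F'=F+nG$ and exploiting $Q(F')=Q(F)$ through the polarization identity, you get $\langle F,F'\rangle=D-\tfrac{n^{2}}{2}Q(G)$ with $Q(G)\in\Z$, i.e.\ the relevant trace equals $-2D+n^{2}Q(G)$. This buys a strictly stronger theorem: every immersed totally geodesic surface of discriminant $\le n^{2}/4$ in $\H^{3}/\Gamma_d(n)$ is embedded, a threshold consistent with --- and essentially forced by --- the parabolic computation of Lemma \ref{lem:para} and Corollary \ref{para:cor}, so the improvement is not suspicious. One harmless slip: your displayed formula $G=-(E^{*}F+FE)+nE^{*}FE$ presumes $\gamma^{-1}=I-nE$, whereas in $\SL_2$ one has $\gamma^{-1}=I+n(\mathrm{tr}(E)I-E)$; this does not matter, since the only properties you use are that $G=(F'-F)/n$ is Hermitian with entries in $\mathcal{O}_d$, hence has integer diagonal entries and integer $Q(G)$.
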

We note that taking $n$ large enough in Theorem \ref{main3} will yield closed, embedded, orientable surfaces in $\mathbb{H}^3/\Gamma_d(n)$.

\section{Further context and consequences}
\label{context}

In this subsection we discuss in more detail some of the remarks in \S \ref{intro} about totally geodesic surfaces in Bianchi orbifolds and their finite covers.

\subsection{Finiteness}
 For any finite area hyperbolic surface of genus $g$ with $c$ cusps,
 it is easy to see that there are infinitely many simple closed geodesics
 apart from the case when $g=0$ and $c=3$. Counting such geodesics is a good deal more delicate, however, it is known that (see \cite{rivin,MR2415399}) the number of simple closed geodesics of length $<L$ is
\[
\sim L^{6g-6+2b+2c}.
\]

Turning to dimension $3$ the situation is very different, many finite volume hyperbolic $3$-manifolds do not contain any closed {\em immersed} totally geodesic surfaces (see for example \cite[Chapter 9]{MR2}), and when such a manifold does contain closed embedded totally geodesic surfaces, there are at most finitely many (see \cite{Hass} which proves something slightly stronger for a class of surfaces that contain the totally geodesic ones). For convenience we present below
a folklore proof of this for finite volume cusped hyperbolic $3$-manifolds.  Thus a natural follow-up problem
(addressed herein for Bianchi orbifolds) in the case where there are closed embedded totally geodesic surfaces is to count them.

\begin{proof}[Folklore proof]
Let $M=\mathbb{H}^3/\Gamma$ be a cusped finite volume hyperbolic $3$ manifold, for which is normalized so that $\infty$ is the fixed point of a parabolic element of $\Gamma$. Suppose that $M$ contains a closed embedded totally geodesic surface $S$, which upon lifting to $\mathbb{H}^3$ provides a collection of embedded and disjoint hyperbolic planes. Let $H$ be one such plane, and assume that it is bounded by the circle
in $\partial\mathbb{H}^3$ given by:
\[
\mathcal{C}~:~|z-z_0|^2=r^2.
\]
Since $S$ is embedded it follows that $\mathcal{C}\cap \gamma\mathcal{C}=\emptyset$ for all parabolic elements $\gamma$ fixing $\infty$.  Choosing
$\gamma=\begin{pmatrix}1 & x \\ 0 &1\end{pmatrix} \in \Gamma$ for which $|x|$ is minimal (and not zero), it follows that
$r < \frac{|x|}{2}$. Hence,  the image in $M$ of the horoball neighborhood $\{(z,r)\in \mathbb{H}^3~:~r>\frac{|x|}{2}\}$ does not intersect any embedded closed totally geodesic surface.

Now if there were infinitely many closed embedded totally geodesic surfaces in $M$, then Ratner's theorem implies that they must equidistribute as the area $\to \infty$. This is a contradiction to the existence of the forbidden region.
\end{proof}

\subsection{Cuspidal Cohomology Problem}
\label{CCP}

Let $\Gamma$ be a non-cocompact Kleinian group acting on ${\mathbb H}^3$ with
finite co-volume, and set $X={\mathbb H}^3/\Gamma$.
Let $\mathcal{U}(\Gamma)$ denote the subgroup of $\Gamma$ generated by
parabolic elements of $\Gamma$. Note that $\mathcal{U}(\Gamma)$ is
a normal subgroup of $\Gamma$, and we define:
$$V(X)=V(\Gamma) = (\Gamma/\mathcal{U}(\Gamma))^{\ab} \otimes_{\Z} {\Q}.$$
Now $V(X)$ can be identified as a subspace of $H_1(X, \mathbb{Q})$ which defines the (degree $1$) cuspidal (co)homology of $X$ (or $\Gamma$). Alternatively this is the subspace of $H_1(X,\mathbb{Q})$ {\em not} arising from
homology coming from the cusps. We denote the dimension of $V(X)$ by $r(X)$ or $r(\Gamma)$.  Note that if $S\hookrightarrow X$ is a closed embedded orientable surface that is non-separating in $X$, then duality provides a class in $V(X)$.

In the setting of  the Bianchi groups, {\em The Cuspidal Cohomology Problem} posed in the $1980$'s asked
which Bianchi groups have $r(\Gamma_d) = 0$.
Building on a construction of Zimmert \cite{Zim} using embedded non-compact totally geodesic surfaces, work of many people (see for example \cite{GS0}, \cite{GSAnnalen}, \cite{Roh}), finally led to the solution of the
Cuspidal Cohomology Problem by Vogtmann \cite{Vo} who determined the list of all values of $d$ with $r(\Gamma_d)=0$, namely:

\begin{theorem}[Vogtmann]
\label{cuspsummary}
$r(\Gamma_d)=0$ if and only if
$$d\in\{1,2,3,5,6,7,11,15,19,23,31,39,47,71\}.$$\end{theorem}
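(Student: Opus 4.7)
The plan is to split the statement into the two directions it asserts: showing $r(\Gamma_d) = 0$ for each of the fifteen listed values, and producing a nonzero class in $V(\Gamma_d)$ for every other squarefree $d > 0$. The first direction is essentially a finite computation: for each of the fifteen small $d$, the Bianchi--Humbert fundamental polyhedron for $\Gamma_d$ acting on $\mathbb{H}^3$ is known explicitly, and its face-pairing and edge-cycle data yield a finite presentation of $\Gamma_d$. Abelianising and then killing the normal closure of the parabolic generators (one set of such generators from each cusp) produces a finitely presented abelian group whose rank is $r(\Gamma_d)$, and for each of the fifteen values one checks case-by-case that this rank is $0$.

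The second direction is the substantive one, and I would attack it geometrically. Following Zimmert, I would look for non-compact, orientable, properly embedded, totally geodesic surfaces in $\Omega_d$, each of which dualises to a class in $H^1(\Omega_d;\mathbb{Q})$, and then control which of these classes actually survive in the cuspidal quotient $V(\Gamma_d)$. Concretely, for a positive integer $n$ satisfying suitable arithmetic conditions against $d$ (coprimality and certain quadratic residue/splitting conditions of the sort appearing in \cite{Zim}), the stabiliser in $\Gamma_d$ of an appropriate geodesic hyperplane $H_n \subset \mathbb{H}^3$ arising from a binary Hermitian form is a Fuchsian group $F_n$, and $H_n/F_n$ embeds in $\Omega_d$ as a finite-area totally geodesic surface $S_n$ whose cusps are a subset of those of $\Omega_d$. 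The surface $S_n$ provides a relative $2$-cycle, hence (by Lefschetz duality for manifolds with cusps) a class in $H^1(\Omega_d;\mathbb{Q})$; the key claim is that by detecting $S_n$ with a transverse dual loop that has zero intersection number with every cusp cross-section, the class lies in $V(\Gamma_d)$ and is nonzero, and that different admissible $n$ yield linearly independent classes. This reduces the problem to showing that the Zimmert set $Z(d)$ of admissible $n$ is non-empty for every $d$ outside the fifteen exceptional values.

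The main obstacle is precisely this last step at the boundary of the exceptional list. For $d$ large the count $|Z(d)|$ tends to infinity by an elementary sieve, so the nontriviality of $V(\Gamma_d)$ for those $d$ is automatic; the difficulty is to show that the (relatively small) construction still produces a nonzero cuspidal class for every $d$ that lies outside $\{1,2,3,5,6,7,11,15,19,23,31,39,47,71\}$ but is itself small. Zimmert's original surfaces handle the bulk of the cases, with subsequent refinements by Grunewald--Schwermer \cite{GS0,GSAnnalen} and Rohlfs \cite{Roh} widening the construction and pushing the threshold lower, and Vogtmann's \cite{Vo} final contribution is to dispose of the remaining borderline values either by strengthening the geometric construction or by a direct cohomological computation on the Bianchi cell complex. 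In my write-up I would reduce the problem to these two steps (the finite computation for the listed $d$, and the Zimmert-set nonemptiness for the rest), and flag the finite borderline verification as the step requiring real care.
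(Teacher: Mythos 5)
This statement is quoted in the paper as Theorem~\ref{cuspsummary} with an attribution to Vogtmann \cite{Vo}; the paper supplies no proof of its own, only the historical outline in \S\ref{CCP} (Zimmert's surfaces, then \cite{GS0}, \cite{GSAnnalen}, \cite{Roh}, then Vogtmann's determination of the list). Your sketch follows that same historical route, so there is no divergence of approach to report --- but as a proof it has two genuine gaps beyond the finite computations you explicitly defer.

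First, the cuspidality of the class dual to a non-compact properly embedded totally geodesic surface $S_n$ is the crux, not a footnote. The homomorphism $\Gamma_d\to\mathbb{Z}$ given by intersection number with $S_n$ descends to $(\Gamma_d/\mathcal{U}(\Gamma_d))^{\ab}$ only if every parabolic element of $\Gamma_d$ has zero intersection number with $S_n$; since the ends of $S_n$ run out the cusps of $\Omega_d$, a cusp cross-section (a torus or Klein-bottle quotient) meets $S_n$ in a nonempty collection of closed curves, and a parabolic transverse to those curves will have nonzero intersection number. Zimmert's arithmetic conditions on $n$ are engineered precisely so that the boundary curves of $S_n$ in each cusp are null-homologous in the cusp cross-section (or so that a suitable linear combination of surfaces kills the boundary contribution); saying you will ``detect $S_n$ with a transverse dual loop that has zero intersection number with every cusp cross-section'' names the required conclusion without supplying the mechanism, and this is where the construction can fail for specific $d$. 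Second, the reduction of the entire non-exceptional range to ``$Z(d)\neq\emptyset$'' is not correct as stated: the surface construction and its refinements do not by themselves dispose of all $d$ outside the fifteen listed values, and Vogtmann's contribution is not a strengthening of the Zimmert-set criterion but an explicit computation of $H_*(\Gamma_d;\mathbb{Q})$ via a $\Gamma_d$-equivariant deformation retract of $\mathbb{H}^3$ onto a $2$-dimensional spine, carried out for all $d$ up to the range needed to meet the asymptotic lower bounds of \cite{Roh}. Both directions of the ``if and only if'' therefore ultimately rest on that explicit homology computation, which your plan acknowledges only for the fifteen listed $d$ and not for the small non-exceptional $d$ where the geometric construction is inconclusive.
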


\noindent We note that all the values of $d$ appearing in Theorem \ref{cuspsummary} appear in the set $\mathcal{E}$ of the appendix. In particular, for those $d\neq 39$ appearing in Theorem \ref{cuspsummary},
Theorem \ref{main} applies to show that $\Omega_d$ does not contain a closed embedded totally geodesic surface.

\subsection{Link complements}
\label{links}
One consequence of Theorem \ref{cuspsummary} is that it provides a list of those $d$ for which $\Omega_d$ {\em could have} a finite cover homeomorphic to a link complement in $S^3$.  Many examples of arithmetic link complements in $S^3$
were known prior to \cite{Vo} (see for example \cite{Ba0}, \cite{Ba1}, \cite{Ha}, and \cite{Th}
to name a few), and in \cite{Ba1} it was shown that for
every $d$ as in Theorem \ref{cuspsummary} there does exist an arithmetic link complement covering $\Omega_d$.  Hence we have
the following result \cite{Ba1}:

\begin{theorem}[Baker]
\label{arithlinks}
$\Omega_d$ is covered by an arithmetic link complement in $S^3$ if and only if
$$d\in\{1,2,3,5,6,7,11,15,19,23,31,39,47,71\}.$$\end{theorem}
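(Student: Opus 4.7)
\emph{Proof proposal.} The statement has two directions, and these have very different flavors. The ``only if'' direction is a short cohomological consequence of Vogtmann's Theorem \ref{cuspsummary}, while the ``if'' direction is a case-by-case geometric construction.

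For the forward direction, the plan is as follows. Suppose $L = S^3 \setminus K$ is a link complement that finitely covers $\Omega_d$ via a map $\pi: L \to \Omega_d$. Since $H_1(L,\mathbb{Q})$ is generated by meridians of the components of $K$ and every meridian is peripheral, $V(L) = 0$. Next, the pullback $\pi^*: H^1(\Omega_d,\mathbb{Q}) \to H^1(L,\mathbb{Q})$ is injective via the standard transfer relation $\pi_* \pi^* = \deg(\pi)\cdot \mathrm{id}$, and since every cusp of $L$ sits over a cusp of $\Omega_d$, pullback sends cuspidal classes to cuspidal classes. Hence $V(\Omega_d)$ injects into $V(L) = 0$, so $r(\Omega_d) = 0$, and Theorem \ref{cuspsummary} places $d$ in the given list of fourteen values.

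For the reverse direction, one must exhibit, for each of the fourteen $d$, an arithmetic link complement in $S^3$ covering $\Omega_d$. A handful of these are classical: the figure-eight knot complement is arithmetic with invariant trace field $\mathbb{Q}(\sqrt{-3})$ and covers $\Omega_3$; the Whitehead link and the Borromean rings cover $\Omega_1$; and chain-link constructions yield examples for several other small $d$. Note that since finite covers of an arithmetic orbifold lie in its commensurability class, arithmeticity of the covering link complement is automatic once the covering is established; the true task is to produce the link. The strategy is to start from a well-chosen torsion-free finite-index subgroup of $\Gamma_d$ (e.g.\ a suitable principal congruence subgroup, whose existence as a manifold cover with trivial cuspidal cohomology is compatible with $r(\Omega_d)=0$) and then to certify that the resulting cusped hyperbolic manifold is in fact a link complement in $S^3$.

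The principal obstacle lies in this last certification, particularly for the larger values $d \in \{15,19,23,31,39,47,71\}$ where no classical link is immediately available. Here one has to identify an explicit Dehn surgery or handle decomposition on the chosen cover, typically by combining combinatorial information from a fundamental domain with a system of compressing disks, or by Kirby-calculus manipulations, to check that the compactified underlying $3$-manifold is $S^3$. There is no uniform shortcut, and matching each $d$ individually to a link whose complement is commensurable with $\Gamma_d$ is what occupies the bulk of Baker's argument in \cite{Ba1}.
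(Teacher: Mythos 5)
Your proposal matches the paper's treatment of this statement: the paper presents it as a quoted result, with the ``only if'' direction following from Vogtmann's Theorem \ref{cuspsummary} together with the observation that a link complement in $S^3$ has vanishing cuspidal cohomology (precisely your meridian/transfer argument, which is correct), and the ``if'' direction deferred entirely to Baker's case-by-case constructions in \cite{Ba1}. Like the paper, you correctly identify that the substantive content is Baker's explicit exhibition of a covering link complement for each of the fourteen values of $d$, which neither you nor the paper reproves.
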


\noindent It was shown in \cite{Ha} that for $d=1,2,3,7,11$, there are examples of alternating links whose complements $S^3\setminus L$ cover $\Omega_d$. In particular, in combination with work of Menasco \cite{Men}, such link complements
cannot contain closed embedded totally geodesic surfaces, giving an alternate proof of Theorem \ref{main} in these cases.

\subsection{Congruence subgroups}
\label{congruence}

Of particular interest in connection with the theory of automorphic forms has been the existence of (or lack thereof) non-trivial cuspidal cohomology for the congruence groups $\Gamma_{d,0}(\mathcal{P})$ where $\mathcal{P}$
is a prime ideal of $\mathcal{O}_d$
for those $d$ given in Theorem \ref{cuspsummary}.  Here $\Gamma_{d,0}(\mathcal{P})$ is the congruence subgroup corresponding to the preimage of the Borel subgroup of $\PSL(2,\mathcal{O}_d/\mathcal{P})$.

Much of this interest is motivated by comparison with the classical connections between elliptic curves over $\mathbb{Q}$ and non-trivial cuspidal cohomology classes on congruence subgroups of $\PSL(2,\mathbb{Z})$.
In the setting of the Bianchi groups and the groups $\Gamma_{d,0}(\mathcal{P})$ the non-vanishing of the cuspidal cohomology is conjecturally connected to elliptic curves and other abelian varieties defined over imaginary quadratic fields.  We will not describe this in any detail and refer the reader to \cite{Cr}, \cite{Sen1} and \cite{Sen2}.

Now when $\mathcal{P}=p\mathcal{O}_d$ is an inert prime, then a good deal is known about the existence of non-trivial cuspidal cohomology for the congruence groups $\Gamma_{d,0}(\mathcal{P})$
(see for example \cite{GSAnnalen} and \cite{Roh}), and in particular how embedded totally geodesic surfaces can be used to construct cuspidal cohomology for which $r(\Gamma_{d,0}(\mathcal{P})) = r(\mathcal{P})$ grows with $p$.

On the other hand it is unknown whether there are infinitely many rational primes
$p=\mathcal{P}\overline{\mathcal{P}}$ split to $\mathcal{O}_d$ (in what follows we call such a prime $\mathcal{P}$ a degree one prime) for which $r(\mathcal{P})\neq 0$ or $r(\mathcal{P})=0$.

For example, when $d=1$, the work of \cite{Sen1} (which builds on the work much earlier work of \cite{Cr} and \cite{GHM}) shows that for degree one primes $\mathcal{P}$ with $N(\mathcal{P}) \leq 45000$, $r(\mathcal{P})=0$ for $2061$ of these, and only $260$
have $r(\mathcal{P}) >0$, and of these $177$ have $r(\mathcal{P}) =1$. The smallest degree one prime $\mathcal{P}$ for which $r(\mathcal{P}) >0$ occurs when $N(\mathcal{P})=137$ and in this case $r(\mathcal{P})=1$.
The smallest degree one prime $\mathcal{P}$ for which $r(\mathcal{P}) >1$ occurs when $N(\mathcal{P})=433$ and $r(\mathcal{P})=2$ (see \cite{GHM} and \cite{Sen1}).

It is an immediate consequence of Theorem \ref{main2} that in the cases when $r(\mathcal{P}) >0$, none of the non-trivial classes can be dual to a closed embedded totally geodesic surface.

\section{Preparation}
\label{prep}

\subsection{Parametrization of totally geodesic surfaces}
\label{parameter}
We begin by recalling some preliminary facts regarding immersed totally geodesic surfaces \cite{MR1} and \cite[Chapter 9.6]{MR2}.
If $S\hookrightarrow \Omega_d$ is an immersed totally geodesic surface, then the lifts to $\mathbb{H}^3$ consist of a collection of hyperbolic planes, each of which bounds
a circle (or a straight-line) in $\mathbb{C}\cup \{\infty\} \cong \partial\mathbb{H}^3$. The subgroup of $\Gamma_d$ defined by
\[
\Stab(\mathcal{C},\Gamma_d) = \{\gamma\in \Gamma_d : \gamma \mathcal{C} =  \mathcal{C}\}
\]
is conjugate to an arithmetic group of isometries of $\mathbb{H}^2$, containing the (maximal) arithmetic Fuchsian subgroup (of index at most $2$)
\[
\Stab^+(\mathcal{C},\Gamma_d) = \{\gamma\in \Stab(\mathcal{C},\Gamma_d) :  \gamma~\hbox{preserves components of}~\mathbb{C}\setminus \mathcal{C}\}.
\]
Now the equation of such a circle or straight-line has
the form (see \cite[Chapter 9.6]{MR2})
\[
\mathcal{C}~:~a|z|^2+Bz+\bar{B}\bar{z}+c=0
\]
with $a,c \in \mathbb{Z}$ and $B \in \mathcal{O}_d$, and vice versa. Two such circles (or straight-lines) $\mathcal{C}$ and $\mathcal{C}'$ are said to be {\em equivalent} if there exists $\gamma\in\Gamma_d$ such that $\gamma \mathcal{C}=\mathcal{C}'$.
Define $D=|B|^2-ac$ to be the {\em discriminant} of $\mathcal{C}$. This is preserved by the action of  $\Gamma_d$, and hence equivalent circles have the same discriminant (see \cite{MR1}).  If $\Gamma <\Gamma_d$ is a subgroup of finite index, then a $\Gamma$-equivalence class of circles and straight-lines can be associated to a totally geodesic surface in $\mathbb{H}^3/\Gamma$ and vice versa. Hence
we can also refer to the discriminant of  the associated totally geodesic surface.

When $a\neq0$, $\mathcal{C}$ is a circle centered at $-\overline{B}/a$, with radius $\sqrt{D}/a$.  We will be mainly interested in closed totally geodesic surfaces, and to that end we note:

\begin{lemma}
\label{straight_line}
Suppose that $S$ is a totally geodesic surface immersed in $\Omega_d$ associated to the circle $\mathcal{C}$.
\begin{enumerate}
\item If $S$ is closed, then $\gamma(\mathcal{C})$ is a circle (i.e. not a straight-line) for every $\gamma\in \Gamma_d$.
\item If $S$ is non-compact, then the discriminant $D$ of $S$ has the form $\alpha^2+d\beta^2$ for $\alpha,\beta \in \mathbb{Q}$.
\end{enumerate}
\end{lemma}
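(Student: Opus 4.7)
The plan is to relate both parts of the lemma to the existence of parabolic elements in the arithmetic Fuchsian group $\Stab^+(\mathcal{C},\Gamma_d)$, using the standard dichotomy that a finite covolume Fuchsian group is cocompact (so $S$ is closed) if and only if it contains no parabolic element. Throughout I will exploit the fact that the parabolic fixed points of $\Gamma_d$ are precisely $\mathbb{Q}(\sqrt{-d})\cup\{\infty\}$.

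For part (1) I prove the contrapositive: if some $\gamma\mathcal{C}$ is a straight-line then $S$ is non-compact. Write $\gamma\mathcal{C}$ in the form $Bz+\bar B\bar z+c=0$ with $B\in\mathcal{O}_d\setminus\{0\}$ and $c\in\mathbb{Z}$. Consider $\tau(z)=z+\sqrt{-d}\,\bar B$, which lies in $\Gamma_d$ because $\sqrt{-d}\in\mathcal{O}_d$ for every square-free $d>0$ and $\mathcal{O}_d$ is closed under complex conjugation. The straight-line $\gamma\mathcal{C}$ has tangent direction $i\bar B$, and the translation vector $\sqrt{-d}\,\bar B=\sqrt{d}\cdot i\bar B$ is a positive real multiple of $i\bar B$, so $\tau$ preserves $\gamma\mathcal{C}$, its bounding hyperbolic plane, and the two half-planes it separates. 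Thus $\tau$ is a nontrivial parabolic in $\Stab^+(\gamma\mathcal{C},\Gamma_d)=\gamma\,\Stab^+(\mathcal{C},\Gamma_d)\gamma^{-1}$. This forces the Fuchsian group to be non-cocompact, so $S$ is non-compact, contradicting the hypothesis.

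For part (2), suppose $S$ is non-compact. Then $\Stab^+(\mathcal{C},\Gamma_d)$ is a non-cocompact finite covolume Fuchsian group and so contains a parabolic element of $\Gamma_d$, whose fixed point $z_0$ necessarily lies on $\mathcal{C}$ and is a parabolic fixed point of $\Gamma_d$. Hence $z_0\in\mathbb{Q}(\sqrt{-d})\cup\{\infty\}$. If $z_0=\infty$ then $\mathcal{C}$ is a straight-line ($a=0$) and, writing $B=e+f\sqrt{-d}$ with $e,f\in\mathbb{Q}$, directly $D=|B|^2=e^2+df^2$. Otherwise, write $z_0=p+q\sqrt{-d}$ with $p,q\in\mathbb{Q}$ and substitute into $a|z_0|^2+Bz_0+\bar B\bar z_0+c=0$; solving for $c$ and completing the square in $D=|B|^2-ac$ yields
\[
D=(e+ap)^2+d(f-aq)^2,
\]
so $D=\alpha^2+d\beta^2$ with $\alpha=e+ap$, $\beta=f-aq$ in $\mathbb{Q}$.

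The main obstacle is the construction of the explicit parabolic $\tau$ in part (1): the candidate $\sqrt{-d}\,\bar B$ works uniformly in $d$ because $\sqrt{-d}$ always lies in $\mathcal{O}_d$ (including when $d\equiv 3\pmod 4$, where $\sqrt{-d}=2\omega-1$ for $\omega=(1+\sqrt{-d})/2$), and because multiplication by $\sqrt{-d}$ realizes a $90^\circ$ rotation, scaled by $\sqrt d$, carrying $\bar B$ onto the direction of the line. Once this translation is in hand, part (1) is immediate from the Fuchsian compactness criterion, and part (2) reduces to the routine algebraic identity above.
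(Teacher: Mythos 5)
Your proof is correct, but it follows a genuinely different route from the paper's. The paper runs both parts through the invariant quaternion algebra of $\Stab^+(\mathcal{C},\Gamma_d)$, whose Hilbert symbol is $\bigl(\frac{-d,\,D}{\mathbb{Q}}\bigr)$: for a straight-line $D=|B|^2=b_1^2+db_2^2$, and the explicit rational point $x=b_2/b_1$, $y=1/b_1$ on $-dx^2+Dy^2=1$ shows the algebra is $M(2,\mathbb{Q})$, hence the Fuchsian group is non-cocompact (part (1)); conversely non-compactness forces the algebra to split, yielding a rational point on the same conic and hence $D=(1/y)^2+d(x/y)^2$ (part (2)). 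You instead argue directly with parabolics: for (1) you exhibit the explicit translation $z\mapsto z+\sqrt{-d}\,\bar B$ in $\Stab^+(\gamma\mathcal{C},\Gamma_d)$ (the direction check $\sqrt{-d}\,\bar B=\sqrt{d}\cdot i\bar B$ is right, and $\sqrt{-d}\,\bar B\in\mathcal{O}_d$ always), and for (2) you use that parabolic fixed points of $\Gamma_d$ lie in $\mathbb{Q}(\sqrt{-d})\cup\{\infty\}$ together with a direct completion of the square, which I verified: $D=|B|^2-ac=(e+ap)^2+d(f-aq)^2$ when $z_0=p+q\sqrt{-d}$ lies on $\mathcal{C}$. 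Your argument is more elementary and geometric, avoiding the quaternion-algebra splitting criterion entirely; its only implicit steps are standard (a finite-coarea Fuchsian group is cocompact iff it has no parabolics, and an element of $\PSL(2,\mathbb{C})$ preserving a hyperbolic plane and restricting to a parabolic of that plane is itself parabolic, so its fixed point is a cusp of $\Gamma_d$). The paper's approach buys uniformity — one splitting criterion handles both directions and ties into the arithmetic framework used elsewhere in the paper — while yours makes the source of non-compactness (an explicit cusp on the circle) completely concrete.
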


\begin{proof} Suppose that $\mathcal{C}$ is a straight-line given by $Bz+\bar{B}\bar{z}+c=0$, so that $D=|B|^2$.
From \cite[Chapter 9.6]{MR2} the invariant quaternion algebra associated to $\Stab^+(\mathcal{C},\Gamma_d)$ has Hilbert Symbol
$$\biggl ({\frac{-d,D}{\mathbb{Q}}}\biggr ) = \biggl ({\frac{-d,|B|^2}{\mathbb{Q}}}\biggr ).$$
We claim that this quaternion algebra is isomorphic to $M(2,\mathbb{Q})$ from which the result follows. To see this, after possibly clearing denominators from $B$, we can assume that the numerator of the Hilbert Symbol
has the form $(-d,b_1^2+db_2^2)$ with $b_1,b_2\in\mathbb{Z}$.

If $b_1=0$,
the quaternion algebra is isomorphic to one with Hilbert Symbol $\biggl ({\frac{-d,d}{\mathbb{Q}}}\biggr )$, and this is isomorphic to $M(2,\mathbb{Q})$. Thus we can assume that $b_1\neq 0$, but then note that
$-dx^2+(b_1^2+db_2^2)y^2=1$ has a solution for $x,y\in\mathbb{Q}$, namely $x=b_2/b_1$, $y=1/b_1$. As is well-known (see \cite[Chapter 2.3]{MR2} for instance) this is equivalent to the quaternion algebra being isomorphic to $M(2,\mathbb{Q})$.
This proves the first part.\\[\baselineskip]
For the second part, by assumption the invariant quaternion algebra associated to $\Stab^+(\mathcal{C},\Gamma_d)$ is isomorphic to $M(2,\mathbb{Q})$. Again using \cite[Chapter 2.3]{MR2}, we deduce that $-dx^2+Dy^2=1$. The required description of $D$ follows from this. \end{proof}


It will also be convenient to have the following interpretation of these circles and the action of $\Gamma_d$.  To each circle or straight-line $\mathcal{C}$ with equation as above, we can associate a Hermitian matrix
\[
A=\begin{pmatrix}a & B \\ \bar{B} & c\end{pmatrix}
\]
and a binary Hermitian form
\[
Q_A(x,y)=\begin{pmatrix} x & y\end{pmatrix}A\begin{pmatrix} \bar{x} \\ \bar{y}\end{pmatrix},
\]
so that
\[
\mathcal{C}~:~ Q_A(z,1)=0.
\]

We can define an action of $\gamma \in \Gamma_d$ on the set of such matrices $A$ as above by:
\[
\gamma^* A \gamma,
\]
where $*$ denotes conjugate-transpose. In terms of the circles, this action sends $\mathcal{C}$ to $\gamma^{-1} \mathcal{C}$.

Note $D=|B|^2-ac=-\det(A)>0$ which we also refer to as the discriminant of the Hermitian form. We say that $A$ or $(a,B,c)$ is {\em primitive}, if no rational integer divides $\gcd(a,B,c)$.

\subsection{Detecting intersection via a trace inequality}
\label{intersection}
The following simple observation regarding the intersection of two circles or straight-lines in $\mathbb{C}\cup \{\infty\}$ plays an important role in our analysis.

\begin{lemma}\label{lem:tr}
Two circles $a|z|^2+Bz+\overline{Bz}+c=0$ and $a'|z|^2+B'z+\overline{B'z}+c'=0$ (straight-lines, if either $a$ or $a'$ is zero) of the same discriminant $D$ intersect nontrivially if and only if
\[
|\mathrm{Tr}(A(A')^{-1})| <2,
\]
where
\[
A = \begin{pmatrix} a & B \\ \bar{B} &c \end{pmatrix},~\hbox{and }~A' = \begin{pmatrix} a' & B' \\ \bar{B'} &c' \end{pmatrix}.
\]
\end{lemma}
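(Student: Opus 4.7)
The plan is to unpack the trace condition into a concrete inequality and then verify it case-by-case using elementary Euclidean geometry of circles and lines in $\mathbb{C}$.

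First I would rewrite the trace. Since $\det A' = -D$, we have $(A')^{-1} = -\frac{1}{D}\,\mathrm{adj}(A')$, and a direct two-by-two computation yields
\[
\mathrm{Tr}\bigl(A(A')^{-1}\bigr) = -\frac{T}{D}, \qquad T := ac' + a'c - 2\,\mathrm{Re}(B\overline{B'}).
\]
So the lemma reduces to: the two circles (or lines) intersect nontrivially in $\mathbb{C}\cup\{\infty\}$ iff $|T| < 2D$. I would then split into the three possibilities for $(a,a')$.

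In the main case $a,a' \neq 0$, the circle $\mathcal{C}$ has center $z_1 = -\bar{B}/a$ and radius $r_1 = \sqrt{D}/|a|$, and similarly for $\mathcal{C}'$. Two circles intersect at two points iff $\bigl||z_1 - z_2|^2 - r_1^2 - r_2^2\bigr| < 2 r_1 r_2$. The key computation is to expand $(aa')^2|z_1-z_2|^2 = |a\bar{B'} - a'\bar{B}|^2$, substitute $|B|^2 = D + ac$ and $|B'|^2 = D + a'c'$, and observe the cancellation
\[
(aa')^2\bigl(|z_1-z_2|^2 - r_1^2 - r_2^2\bigr) = D(a^2 + a'^2) + aa'T - D(a^2 + a'^2) = aa'T,
\]
so the intersection criterion becomes exactly $|T| < 2D$. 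In the mixed case (say $a=0$, so $\mathcal{C}$ is the line $Bz+\bar{B}\bar{z}+c=0$ and $D = |B|^2$), I compute the distance from the center of $\mathcal{C}'$ to $\mathcal{C}$ and demand it be strictly less than $r_2$; the result collapses to the same inequality $|T| < 2D$. In the remaining case $a=a'=0$, both curves pass through $\infty$, so they intersect nontrivially iff they cross in $\mathbb{C}$, i.e., iff $B, B'$ are not real multiples of each other; since $|B|=|B'|=\sqrt{D}$, Cauchy--Schwarz gives $|\mathrm{Re}(B\overline{B'})| < D$, which is once again $|T|<2D$.

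The only genuine obstacle is the algebraic bookkeeping in Case 1: one has to notice the cancellation of the $D(a^2 + a'^2)$ terms, which is what makes the identification with the trace clean. Everything else is routine. Assembling the three cases completes the proof.
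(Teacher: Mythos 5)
Your proposal is correct and follows essentially the same route as the paper: reduce the trace to the explicit quantity $ac'+a'c-2\,\mathrm{Re}(B\overline{B'})$ divided by $-D$, then verify the inequality $|T|<2D$ case-by-case (two circles, circle and line, two lines) via elementary Euclidean geometry. Your use of the squared criterion $\bigl||z_1-z_2|^2-r_1^2-r_2^2\bigr|<2r_1r_2$ is just the squared form of the paper's ``distance between centers lies strictly between $|r_1-r_2|$ and $r_1+r_2$,'' and the cancellation you highlight is exactly the ``manipulation'' the paper leaves implicit.
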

\begin{proof}
First assume that $a,a'\neq 0$. Elementary geometry shows that two circles intersect if and only if the distance between the centers of the two circles is greater than the difference of radii and less than the sum of radii. This, together with the descriptions of centers and
radii given above yields.
\[
\left|\frac{\sqrt{D}}{|a|}-\frac{\sqrt{D}}{|a'|}\right| < \left|\frac{B}{a}-\frac{B'}{a'}\right|<\left|\frac{\sqrt{D}}{|a|}+\frac{\sqrt{D}}{|a'|}\right|.
\]
After some manipulation, we see that this is equivalent to
\[
|ac'+ca'-(B'\overline{B}+\overline{B}'B)|<2D.
\]
The statement follows from the trace identity:
\begin{equation}\label{identity}
\mathrm{Tr}(A(A')^{-1}) = -(\frac{ac'+ca'-(B'\overline{B}+\overline{B}'B)}{D}).
\end{equation}
Now assume that $a'=0$, and set $B=B_1+B_2i$, $B'=B_1'+B_2'i$.
Then nontrivial intersection between the circle $a|z|^2+Bz+\bar{B}\bar{z}+c=0$ and the straight-line $B'z+\bar{B'}\bar{z}+c'=2B_1'x-2B_2' y+c'=0$ occurs if and only if the distance from the center of the circle to the line is strictly less than the radius of the circle, i.e.
\[
\frac{|2B_1'\cdot\frac{-B_1}{a}-2B_2'\cdot \frac{B_2}{a}+c'|}{\sqrt{4B_1'^2 + 4B_2'^2}} < \frac{\sqrt{D}}{|a|}.
\]
If $a=a'=0$, then a non-trivial intersection occurs if and only if the two lines are not parallel:
\[
|B_1B_1'+B_2B_2'| < B_1^2+B_2^2.
\]
As above, after some manipulation, we see that these are both equivalent to
\[
|ac'+ca'-(B'\overline{B}+\overline{B}'B)|<2D.
\]
The proof  in both cases is completed using the trace identity \ref{identity}.
\end{proof}

\noindent With this lemma in hand, we can formulate a criterion for an immersed (closed) totally geodesic surface to be embedded.

\begin{corollary}
\label{cor_embed}
Let $\Gamma<\Gamma_d$ be a subgroup of finite index, and  $A$ be a Hermitian matrix associated to an immersed (closed) totally geodesic surface $\mathcal{S} \subset \mathbb{H}^3/\Gamma$. Then $\mathcal{S}$ is embedded in
$\mathbb{H}^3/\Gamma$ if and only if
\begin{equation}\label{cond}
\left|\mathrm{Tr}(\gamma^*A\gamma A^{-1})\right| \geq 2
\end{equation}
for any $\gamma \in \Gamma$.
\end{corollary}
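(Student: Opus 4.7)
The plan is to reduce the embeddedness of $\mathcal{S}$ to a pairwise (dis)jointness condition on lifted circles, and then apply Lemma \ref{lem:tr} directly. First I would observe that a totally geodesic surface $\mathcal{S} \subset \mathbb{H}^3/\Gamma$ is embedded if and only if its preimage in $\mathbb{H}^3$ is a disjoint union of hyperbolic planes, which translates in the boundary picture to the statement that for every $\gamma \in \Gamma$, either $\gamma \mathcal{C} = \mathcal{C}$ or $\gamma \mathcal{C} \cap \mathcal{C} = \emptyset$. This reduces the embeddedness question to comparing $\mathcal{C}$ with each of its $\Gamma$-translates individually.

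Next I would translate the $\Gamma$-action into the Hermitian matrix picture set up in \S \ref{parameter}. Since $A \mapsto \gamma^* A \gamma$ sends the circle $\mathcal{C}$ to $\gamma^{-1}\mathcal{C}$, substituting $\gamma \mapsto \gamma^{-1}$ (which leaves the universal quantifier over $\Gamma$ unchanged) lets me compare $\gamma\mathcal{C}$ with $\mathcal{C}$ by comparing $\gamma^* A \gamma$ with $A$. Because the discriminant is $\Gamma$-invariant, both matrices define circles of the same discriminant $D$, so Lemma \ref{lem:tr} applies: the two circles intersect nontrivially if and only if $|\mathrm{Tr}(\gamma^* A \gamma A^{-1})| < 2$, hence they are disjoint if and only if $|\mathrm{Tr}(\gamma^* A \gamma A^{-1})| \geq 2$.

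The one subtlety, which I expect to be the only real (minor) obstacle, is the case $\gamma\mathcal{C} = \mathcal{C}$, which Lemma \ref{lem:tr} formally excludes since it concerns two distinct circles. Here I would argue directly: if $\gamma$ stabilizes $\mathcal{C}$ then $\gamma^* A \gamma = \mu A$ for some $\mu \in \mathbb{R}^\times$, since the Hermitian form associated to $\mathcal{C}$ is determined up to a real scalar. Taking determinants and using that after lifting to $\SL(2,\mathcal{O}_d)$ one has $|\det\gamma|^2 = 1$, this gives $\mu^2 = 1$, hence $\mu = \pm 1$ and $\mathrm{Tr}(\gamma^* A \gamma A^{-1}) = 2\mu = \pm 2$, so the inequality $|\mathrm{Tr}| \geq 2$ is automatically satisfied (with equality) whenever $\gamma$ stabilizes $\mathcal{C}$. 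Combining the three mutually exclusive possibilities — $\gamma$ stabilizes $\mathcal{C}$; $\gamma\mathcal{C} \neq \mathcal{C}$ and disjoint from $\mathcal{C}$; $\gamma\mathcal{C} \neq \mathcal{C}$ and intersecting $\mathcal{C}$ — shows that embeddedness of $\mathcal{S}$ is exactly equivalent to \eqref{cond} holding for every $\gamma \in \Gamma$.
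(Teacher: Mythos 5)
Your proposal is correct and follows essentially the same route as the paper, which simply applies Lemma \ref{lem:tr} with the pair $\gamma^*A\gamma$ and $A$. The extra care you take with the stabilizer case $\gamma\mathcal{C}=\mathcal{C}$ (where $\gamma^*A\gamma=\pm A$ forces the trace to equal $\pm 2$) is a detail the paper leaves implicit, but it does not change the argument.
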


\begin{proof}
Take $A=\gamma^*A\gamma$ and $A' = A$ in Lemma \ref{lem:tr}.
\end{proof}

Since we are mainly interested in closed surfaces, Lemma \ref{straight_line}
shows that in this case we can always assume that $a\neq 0$ for  all circles under consideration.\\[\baselineskip]
\noindent The proof of Theorem \ref{main3}  is now a quick application of Corollary \ref{cor_embed}.

\begin{proof}[Proof of Theorem \ref{main3}] Let $A=\begin{pmatrix} a & B \\ \bar{B} &c \end{pmatrix}$ be of discriminant $D$, set
$\tilde{A}=\begin{pmatrix}c & -B\\ -\bar{B} & a\end{pmatrix}$, so that $\tilde{A} = -D.A^{-1}$.

Then for any $\gamma \in \Gamma(n)$, we have
\[
\mathrm{Tr}\left(\gamma^* A \gamma \tilde{A} \right) \equiv \mathrm{Tr}\left(A  \tilde{A} \right) \pmod{n}.
\]
Note that $\mathrm{Tr}\left(A  \tilde{A} \right) = -2D$, and so putting this together along with $4D\leq n$,  we deduce that
\[
\left|\mathrm{Tr}\left(\gamma^* A \gamma A^{-1} \right) \right| \geq 2.
\]
Hence, $A$ corresponds to an embedded totally geodesic surface by Corollary \ref{cor_embed}.
\end{proof}

\begin{remark} An important feature of the proof of Theorem \ref{main3} is that the operation $*$ preserves $\Gamma(n)$.  In particular, this applies in the case when $n=p$ where $p\in \mathbb{Z}$ is inert to $\mathbb{Q}(\sqrt{-d})$.
On the other hand, this is not the case, for example, when $n$ is replaced by a prime $\mathcal{P} \subset \mathcal{O}_d$ with
$p=\mathcal{P}\overline{\mathcal{P}}$ a rational prime split to $\mathbb{Q}(\sqrt{-d})$.\end{remark}

We also note the following refinement of the trace condition \eqref{cond} to a more useful form
when $\gamma$ is parabolic. This will be used in \S \ref{picard}.

\begin{lemma}\label{lem:para}
For $\gamma = \begin{pmatrix}1-txy & ty^2 \\ -tx^2 & 1+txy\end{pmatrix}$, we have
\begin{equation}\label{para}
\mathrm{Tr}(\gamma^*A\gamma A^{-1}) = 2-\frac{|t|^2 Q_A(-\bar{x},\bar{y})^2 }{D},
\end{equation}
where  $Q_A(x,y) = \begin{pmatrix}x&y\end{pmatrix}A\begin{pmatrix}\bar{x}\\\bar{y}\end{pmatrix}$, and $D$ is the discriminant of $A$.
\end{lemma}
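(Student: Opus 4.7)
The plan is to exploit the rank-one nilpotent structure of $\gamma - I$.

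\textbf{Step 1 (Nilpotent factorization).} Write $\gamma = I + tN$ where
\[
N = \begin{pmatrix} -xy & y^2 \\ -x^2 & xy \end{pmatrix}
= u v^T, \qquad u = \begin{pmatrix} y \\ x \end{pmatrix},\ \ v = \begin{pmatrix} -x \\ y \end{pmatrix}.
\]
Since $v^T u = -xy+xy = 0$, one has $N^2 = u(v^T u)v^T = 0$. Taking conjugate-transpose gives $\gamma^* = I + \bar t N^*$ with $N^* = \bar v\,\bar u^T$.

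\textbf{Step 2 (Expand and take the trace).} Expand
\[
\gamma^* A \gamma A^{-1} = I + t\, A N A^{-1} + \bar t\, N^* + |t|^2\, N^* A N A^{-1}.
\]
The linear-in-$t$ contributions vanish under the trace: $\mathrm{Tr}(ANA^{-1}) = \mathrm{Tr}(N) = 0$ because $N$ is nilpotent, and similarly $\mathrm{Tr}(N^*)=0$.

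\textbf{Step 3 (Reduce the quadratic term).} The rank-one factorization collapses the quadratic piece to a product of two scalars:
\[
N^* A N = \bar v\, (\bar u^T A u)\, v^T = (\bar u^T A u)\, \bar v v^T,
\]
so that
\[
\mathrm{Tr}(N^* A N A^{-1}) = (\bar u^T A u)\,\mathrm{Tr}(\bar v v^T A^{-1}) = (\bar u^T A u)(v^T A^{-1} \bar v).
\]

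\textbf{Step 4 (Evaluate the two scalars).} A direct entry-wise computation shows $\bar u^T A u$ agrees with the appropriate evaluation of the Hermitian form $Q_A$ (namely $a|y|^2 + c|x|^2 + B x\bar y + \bar B\bar x y$). Using $A^{-1} = -\tilde A / D$ with $\tilde A = \begin{pmatrix} c & -B \\ -\bar B & a \end{pmatrix}$ from the proof of Theorem \ref{main3}, a short matrix multiplication yields the key identity $v^T \tilde A \bar v = \bar u^T A u$, whence $v^T A^{-1}\bar v = -(\bar u^T A u)/D$. Assembling the pieces gives
\[
\mathrm{Tr}(\gamma^* A \gamma A^{-1}) = 2 - \frac{|t|^2 (\bar u^T A u)^2}{D},
\]
which is the stated identity after identifying $\bar u^T A u$ with the relevant value of $Q_A$.

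The only genuine calculation is the compact identity $v^T \tilde A \bar v = \bar u^T A u$ in Step 4 (a two-line verification), and keeping track of the complex conjugations consistently. Everything else is formal manipulation of rank-one operators, with the cancellation of the linear-in-$t$ terms being the conceptual reason the formula has no ``cross term'' between $t$ and $\bar t$.
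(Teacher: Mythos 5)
Your argument is the same one the paper uses: write $\gamma=I+tN$ with $N$ rank one and nilpotent, observe that the terms linear in $t$ and $\bar t$ are traceless, and collapse $\mathrm{Tr}(N^*ANA^{-1})$ into a product of two scalar values of the Hermitian forms attached to $A$ and $A^{-1}$. All of your steps check out, and your Step 4 identity $v^T\tilde A\bar v=\bar u^TAu$ is a clean way to finish.

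One point deserves to be made explicit rather than hidden in the phrase ``the relevant value of $Q_A$.'' What your computation actually produces in the numerator is $(\bar u^TAu)^2$ with
\[
\bar u^TAu \;=\; a|y|^2+c|x|^2+Bx\bar y+\bar B\bar x y \;=\; Q_A(\bar y,\bar x),
\]
and $Q_A(\bar y,\bar x)$ is \emph{not} equal to the $Q_A(-\bar x,\bar y)$ appearing in the statement (already for $B=0$, $x=1$, $y=0$ one value is $c$ and the other is $a$; a direct check with $A=\mathrm{diag}(a,c)$ and $\gamma=\left(\begin{smallmatrix}1&0\\-1&1\end{smallmatrix}\right)$ gives $\mathrm{Tr}(\gamma^*A\gamma A^{-1})=2+c/a=2-Q_A(\bar y,\bar x)^2/D$, confirming your version). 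So your derivation is correct and the identity as printed has its arguments permuted; the paper's own proof commits the same slip when it substitutes $v=(-x,y)$, $w=(y,x)$ into its general formula $\mathrm{Tr}(B^*A_1BA_2)=Q_{A_1}(\bar w_1,\bar w_2)Q_{A_2}(v_1,v_2)$ but then writes $Q_A(-\bar x,\bar y)Q_{A^{-1}}(y,x)$. The discrepancy is harmless for Corollary \ref{para:cor} and the later applications, since there one only needs that \emph{some} choice of $x,y\in\mathcal O_d$ makes the squared quantity equal to the minimum nonzero value of $Q_A$ on $\mathcal O_d^2$, and $(x,y)\mapsto(\bar y,\bar x)$ ranges over all of $\mathcal O_d^2$ just as $(x,y)\mapsto(-\bar x,\bar y)$ does. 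Still, since you are asserting the stated equation verbatim, you should either correct the right-hand side to $2-|t|^2Q_A(\bar y,\bar x)^2/D$ or note explicitly that the two forms of the statement are interchangeable for all subsequent uses.
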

\begin{proof}
We note that for $B$ with $\mathrm{Tr}(B)=0$,
\begin{align*}
\mathrm{Tr} \left((I+B)^* A (I+B) A^{-1}\right) &=\mathrm{Tr} \left((I+B)^*\right)+ \mathrm{Tr} \left((I+B)^* A B A^{-1}\right)  \\
&= 2+ \mathrm{Tr} \left( A B A^{-1}\right)  +  \mathrm{Tr} \left(B^* A B A^{-1}\right) \\
&= 2+\mathrm{Tr} \left(B^* A B A^{-1}\right).
\end{align*}
Consider a matrix $B=v\otimes w = \begin{pmatrix}v_1 w_1 & v_2 w_1 \\ v_1 w_2 & v_2 w_2\end{pmatrix}$, then
\begin{multline*}
\mathrm{Tr} \left(B^* A_1 B A_2\right) = \mathrm{Tr} \left(\begin{pmatrix}|v_1|^2Q_{A_1}(\bar{w_1},\bar{w_2})& \bar{v_1}v_2 Q_{A_1}(\bar{w_1},\bar{w_2})\\ v_1\bar{v_2}Q_{A_1}(\bar{w_1},\bar{w_2})&|v_2|^2Q_{A_1}(\bar{w_1},\bar{w_2})\end{pmatrix}\begin{pmatrix}a_2&B_2\\\bar{B_2}&c_2\end{pmatrix}\right)\\
= Q_{A_1}(\bar{w_1},\bar{w_2}) \left(a_2 |v_1|^2 + \bar{v_1}v_2 \bar{B_2} + v_1\bar{v_2} B_2 + c_2 |v_2|^2 \right) = Q_{A_1}(\bar{w_1},\bar{w_2}) Q_{A_2}(v_1,v_2).
\end{multline*}
 Therefore if we take $P=I+ t\begin{pmatrix}-x\\y\end{pmatrix} \otimes \begin{pmatrix}y\\x\end{pmatrix} $, then we have
\[
\mathrm{Tr} \left(P^* A P A^{-1}\right) = 2+|t|^2 Q_A(-\bar{x},\bar{y}) Q_{A^{-1}}(y,x) = 2+\frac{|t|^2 Q_A(-\bar{x},\bar{y})^2 }{\det A}.
\]
\end{proof}
As a consequence of the Lemma, we bound the discriminant of the embedded totally geodesic surface in terms of the least non-zero integer represented by the corresponding binary hermitian form.
\begin{corollary}\label{para:cor}
With the same notation from Lemma \ref{lem:para}, if $A$ corresponds to an embedded totally geodesic surface in $\Omega_d$, then
\[
4D \leq \min_{x,y\in \mathcal{O}_d~:~Q_A(x,y) \neq 0} Q_A(x,y)^2.
\]
\end{corollary}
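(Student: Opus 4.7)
The idea is to apply the embedding criterion of Corollary \ref{cor_embed} along the one-parameter family of parabolic elements handled by Lemma \ref{lem:para}, and then read off the inequality from the explicit trace formula \eqref{para}. Given any $(x_0,y_0)\in\mathcal{O}_d^2$ with $Q_A(x_0,y_0)\neq 0$, I would set $(x,y)=(-\bar{x_0},\bar{y_0})$ and $t=1$, forming
\[
\gamma=\begin{pmatrix}1-xy & y^2 \\ -x^2 & 1+xy\end{pmatrix}.
\]
Since $x,y\in\mathcal{O}_d$, this lies in $\SL(2,\mathcal{O}_d)$, and because $(x,y)\neq(0,0)$ it represents a non-trivial parabolic element of $\Gamma_d$.

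Plugging $\gamma$ into Lemma \ref{lem:para} gives
\[
\mathrm{Tr}(\gamma^*A\gamma A^{-1})=2-\frac{Q_A(-\bar x,\bar y)^2}{D}=2-\frac{Q_A(x_0,y_0)^2}{D}.
\]
The right-hand side is real and strictly less than $2$, since $D>0$ and $Q_A(x_0,y_0)\neq 0$. The embedding condition $|\mathrm{Tr}(\gamma^*A\gamma A^{-1})|\geq 2$ from Corollary \ref{cor_embed} therefore cannot be satisfied on the positive branch, and we are forced into $\mathrm{Tr}\leq -2$, which rearranges to $Q_A(x_0,y_0)^2\geq 4D$. Minimizing over all admissible $(x_0,y_0)$ yields the claimed inequality.

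The only point worth highlighting is the choice $(x,y)=(-\bar{x_0},\bar{y_0})$: Lemma \ref{lem:para} expresses the trace in terms of $Q_A(-\bar x,\bar y)$, not $Q_A(x,y)$, so to control $Q_A(x_0,y_0)$ directly we must pre-apply the involution $(x,y)\mapsto(-\bar x,\bar y)$ on $\mathcal{O}_d^2$, which of course is a bijection and hence does not shrink the set over which we minimize. Beyond this small bookkeeping step there is no real obstacle; the corollary is essentially a direct specialization of Corollary \ref{cor_embed} to the parabolic family produced by Lemma \ref{lem:para}, and no further input is needed.
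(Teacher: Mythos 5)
Your proof is correct and follows essentially the same route as the paper: apply the embedding criterion of Corollary \ref{cor_embed} to the parabolic elements of Lemma \ref{lem:para}, note that the trace is real and strictly less than $2$, and conclude it must be $\leq -2$, which rearranges to $Q_A(x_0,y_0)^2 \geq 4D$. Your explicit bookkeeping with the involution $(x,y)\mapsto(-\bar{x},\bar{y})$ and the choice $t=1$ just makes precise what the paper leaves to "rearranging and substituting."
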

\begin{proof}
Because $A$ corresponds to an embedded totally geodesic surface,
\[
\left|\mathrm{Tr} \left(P^* A P A^{-1}\right)\right| \geq 2
\]
for any choice of $P\in \Gamma_d$. If $P$ is a parabolic element, Lemma \ref{lem:para} then implies
\[
2-\frac{|t|^2 Q_A(-\bar{x},\bar{y})^2 }{D} \leq -2
\]
if $Q_A(-\bar{x},\bar{y})\neq 0$. The result then follows by rearranging and substituting.
\end{proof}
\subsection{Orientability}
\label{orientable}
As stated in \S \ref{intro}, we need to allow for the possibility that the surfaces constructed are non-orientable. In the following discussion,  we will assume that $\mathcal{C}$ is a circle (i.e. not a straight-line) associated to a surface $S$ immersed in $\Gamma_d$.

That the surface $S$ is non-orientable happens precisely when $[\Stab(\mathcal{C},\Gamma_d):\Stab^+(\mathcal{C},\Gamma_d)]=2$ which in turn occurs if and only there exists $\gamma\in \Stab(\mathcal{C},\Gamma_d)$ that interchanges the components of  $\mathbb{C}\setminus \mathcal{C}$. In this case, $\gamma$ acts as an orientation-reversing element on the hyperbolic plane spanned by $\mathcal{C}$.

In order to determine whether $[\Stab(\mathcal{C},\Gamma_d):\Stab^+(\mathcal{C},\Gamma_d)]=1$ or $2$, we first characterize elements in $\Stab(\mathcal{C},\Gamma_d)$.
\begin{lemma}\label{fuchsian}
$\gamma = \begin{pmatrix}x& y \\ z& w\end{pmatrix}\in \PSL(2,\mathbb{C})$ fixes $a|z|^2+Bz+\bar{B}\bar{z}+c=0$ if and only if
\begin{equation}\label{orient}
\left\{\begin{array}{rl}
\pm \left(w-\frac{B}{a}z\right)&= \bar{x} + \frac{\bar{B}}{a}\bar{z}\\
\pm D\frac{\bar{z}}{a} &= - B\left(x+\frac{B}{a}z\right)  + ay +Bw\\
\pm 1&= \left|x+\frac{B}{a}z\right|^2 - D\left|\frac{z}{a}\right|^2,
\end{array}\right.
\end{equation}
where $D=|B|^2-ac$.
\end{lemma}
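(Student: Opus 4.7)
The plan is to reduce the circle-preservation condition to a single Hermitian matrix identity and then read the three scalar equations off component-wise. Recall from \S\ref{parameter} that the action $A\mapsto \gamma^*A\gamma$ on Hermitian matrices corresponds, at the level of circles, to $\mathcal{C}\mapsto \gamma^{-1}\mathcal{C}$. Hence $\gamma$ stabilizes $\mathcal{C}$ if and only if $\gamma^*A\gamma = \lambda A$ for some nonzero real scalar $\lambda$ (the scalar is real because both sides are Hermitian and $\det A = -D\neq 0$). Taking determinants of both sides and using $\det\gamma = 1$ forces $\lambda^2 = 1$, so $\lambda = \epsilon \in\{+1,-1\}$; the sign $\epsilon=+1$ corresponds to the orientation-preserving subgroup $\Stab^+(\mathcal{C},\Gamma_d)$, and $\epsilon=-1$ to elements swapping the two sides of $\mathcal{C}$. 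This accounts for the $\pm$ that appears in each line of \eqref{orient}.

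Next I will rewrite the identity as $A\gamma = \epsilon (\gamma^*)^{-1}A$, and use $\det\gamma = 1$ to substitute
\[
(\gamma^*)^{-1} = \begin{pmatrix}\bar w & -\bar z \\ -\bar y & \bar x\end{pmatrix}.
\]
The $(1,1)$ entry of this equation reads $ax+Bz = \epsilon(a\bar w - \bar B\bar z)$; dividing through by $a$ and taking complex conjugates yields the first equation of \eqref{orient}. The $(1,2)$ entry reads $ay+Bw = \epsilon(B\bar w - c\bar z)$; inserting the value of $\bar w$ obtained from the first equation and collapsing the resulting $\bar z$-contributions via $|B|^2 - ac = D$ produces the second equation.

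The third equation comes instead from the $(1,1)$ entry of the original identity $\gamma^*A\gamma = \epsilon A$, which reads
\[
a|x|^2 + B\bar x z + \bar B x\bar z + c|z|^2 = \epsilon a.
\]
Dividing by $a$ and completing the square in the first three terms gives exactly $|x + \tfrac{B}{a}z|^2 - D|\tfrac{z}{a}|^2 = \epsilon$. For the converse direction I will verify that the three equations together with $xw-yz = 1$ are enough to recover the $(2,2)$ entry of $\gamma^*A\gamma = \epsilon A$: the first equation determines $\bar w$, the second determines $Bw$, and the unit determinant relation eliminates the remaining cross terms, so the Hermitian identity holds in full.

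The main obstacle is the algebraic bookkeeping in the $(1,2)$ step: after the substitution for $\bar w$ there are two separate $\bar z$-contributions, one coming from $B\bar w$ and one from $-\epsilon c\bar z$, which must telescope to the clean coefficient $\epsilon D/a$. This is precisely where the definition $D = |B|^2-ac$ does the essential work, and it is the only delicate calculation in what is otherwise a routine component-wise expansion.
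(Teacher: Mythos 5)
Your argument is correct and reaches \eqref{orient}, but by a genuinely different route from the paper's. The paper conjugates the circle by $T=\begin{pmatrix}a & B\\ 0 & 1\end{pmatrix}$ to the origin-centered circle $|z|^2=D$, writes down the stabilizer $F=\left\{\begin{pmatrix}\alpha & D\beta\\ \pm\bar\beta & \pm\bar\alpha\end{pmatrix}\right\}$ of that standard circle explicitly, and reads the three equations off the entries of $T\gamma T^{-1}$ (so the $\pm$ and the constraint $|\alpha|^2-D|\beta|^2=\pm1$ come from membership in $F$). You instead encode the condition as the projective identity $\gamma^*A\gamma=\epsilon A$ with $\epsilon=\pm1$ forced by determinants, and extract the equations from the first row of $A\gamma=\epsilon(\gamma^*)^{-1}A$ together with the $(1,1)$ entry of $\gamma^*A\gamma=\epsilon A$; I checked all three component computations and they reproduce \eqref{orient} exactly, with $D=|B|^2-ac$ entering precisely where you say it does. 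Your version has the advantage of making the meaning of the sign completely transparent ($\epsilon=-1$ swaps the components of $\mathbb{C}\setminus\mathcal{C}$), which is what Lemma \ref{lem:orient} needs; the paper's version buys the explicit $(\alpha,\beta)$ parametrization of the stabilizer. Two caveats. First, your converse is the only thin spot: from the vanishing of the first row of $N:=A\gamma-\epsilon(\gamma^*)^{-1}A$ and $(\gamma^*N)_{11}=0$ you get $\bar z N_{21}=0$, which kills $N_{21}$ only when $z\neq0$; the case $z=0$ needs one extra line (e.g.\ Hermitian symmetry of $\gamma^*N$ gives $N_{21}=0$, and comparing $\det(\gamma^*A\gamma)$ with $\det A$ kills $N_{22}$). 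Second, both your proof and the paper's rest on the assertion from \S\ref{parameter} that $A\mapsto\gamma^*A\gamma$ moves the circle by $\gamma^{-1}$; a direct computation with $Q_A(z,1)$ shows it moves it by $\bar\gamma^{\,-1}$, so \eqref{orient} as written characterizes $\overline{\Stab(\mathcal{C},\cdot)}=\Stab(\bar{\mathcal{C}},\cdot)$ rather than $\Stab(\mathcal{C},\cdot)$. This is a conjugation convention inherited from the paper, harmless for every application (since $\Gamma_d$ and the relevant family of circles are closed under complex conjugation), and not a defect of your argument relative to the paper's; your explicit set-up just makes it easier to spot.
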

\begin{proof}
Let $A = \begin{pmatrix}a & B \\ \bar{B} & c\end{pmatrix}$ and let $T = \begin{pmatrix}a & B \\ 0 & 1\end{pmatrix}$. Then $\left(T^{-1}\right)^*A T^{-1}$ corresponds to a circle of radius $D$ centered at origin, and one can check that the set of elements in $\PSL(2,\mathbb{C})$ fixing it is given by
\[
F=\left\{\begin{pmatrix}\alpha & D\beta \\ \pm \bar{\beta} & \pm \bar{\alpha}\end{pmatrix}\in \PSL(2,\mathbb{C})\right\}.
\]
Now note that $\gamma$ fixes $A$ if and only if $T\gamma T^{-1}$ fixes $\left(T^{-1}\right)^*A T^{-1}$, which happens if and only if $T\gamma T^{-1} \in F$. Because
\[
T\gamma T^{-1} = \begin{pmatrix}x+\frac{B}{a}z &- B\left(x+\frac{B}{a}z\right)  + ay +Bw\\ \frac{z}{a} & w-\frac{B}{a}z \end{pmatrix},
\]
we see that \eqref{orient} is a necessary and sufficient condition of $T\gamma T^{-1}$ being an element of $F$.
\end{proof}
Note that $[\Stab(\mathcal{C},\Gamma_d):\Stab^+(\mathcal{C},\Gamma_d)]=2$ if and only if one can find an element in $\Stab(\mathcal{C},\Gamma_d)\backslash \Stab^+(\mathcal{C},\Gamma_d)$. We state this as follows:
\begin{lemma}\label{lem:orient}
A closed totally geodesic surface in $\mathbb{H}^3/\Gamma$
corresponding to $a|z|^2+Bz+\bar{B}\bar{z}+c=0$ is non-orientable, if and only if there exists
$\gamma \in \Gamma$ that solves \eqref{orient} with $\pm$ replaced by $-$.
\end{lemma}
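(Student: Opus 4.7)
The plan is to derive Lemma~\ref{lem:orient} directly from Lemma~\ref{fuchsian} by matching the sign choice in \eqref{orient} with the geometric question of whether a stabilizer of $\mathcal{C}$ preserves or swaps the two components of $\mathbb{C}\setminus\mathcal{C}$. From \S\ref{orientable}, non-orientability is equivalent to $[\Stab(\mathcal{C},\Gamma):\Stab^+(\mathcal{C},\Gamma)]=2$, and by Lemma~\ref{fuchsian} every element of $\Gamma$ that fixes $\mathcal{C}$ satisfies \eqref{orient} with a consistent choice of sign throughout. The task therefore reduces to identifying the ``$-$'' sign in \eqref{orient} with the side-swapping elements of $\Stab(\mathcal{C},\Gamma)$.

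For this identification, I follow the reduction in the proof of Lemma~\ref{fuchsian} and conjugate by $T=\begin{pmatrix}a & B\\ 0 & 1\end{pmatrix}$, which transports $\mathcal{C}$ to the circle $|z|^2=D$ centered at the origin. The full stabilizer of this model circle in $\PSL(2,\mathbb{C})$ is the set
\[
F=\left\{\begin{pmatrix}\alpha & D\beta\\ \varepsilon\bar{\beta} & \varepsilon\bar{\alpha}\end{pmatrix} : \varepsilon\in\{\pm 1\},\ |\alpha|^2-D|\beta|^2=\varepsilon\right\},
\]
and a short direct computation yields
\[
|\gamma z|^2 - D \;=\; \frac{\varepsilon\,(|z|^2-D)}{|\bar{\beta}z+\bar{\alpha}|^2}
\]
for $\gamma\in F$. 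Hence elements with $\varepsilon=+1$ preserve each of $\{|z|^2<D\}$ and $\{|z|^2>D\}$, while elements with $\varepsilon=-1$ interchange them. Since conjugation by $T$ is a M\"obius transformation that sends the two sides of $\mathcal{C}$ bijectively onto the two sides of $\{|z|^2=D\}$, and since the sign $\varepsilon$ in $F$ corresponds to the sign in \eqref{orient} (this is precisely how \eqref{orient} was derived in the proof of Lemma~\ref{fuchsian}), the dichotomy transfers back to $\mathcal{C}$ itself.

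Combining these ingredients finishes the proof: a $\gamma\in\Gamma$ solves \eqref{orient} with the ``$-$'' sign if and only if $\gamma\in\Stab(\mathcal{C},\Gamma)\setminus\Stab^+(\mathcal{C},\Gamma)$, and the existence of such a $\gamma$ is by definition the non-orientability of the surface. There is no real obstacle in this argument; the only genuine content is the short side-preservation calculation displayed above, together with the bookkeeping needed to confirm that $\varepsilon$ tracks the $\pm$ appearing in \eqref{orient} as one conjugates back through $T$.
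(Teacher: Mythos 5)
Your argument is correct and follows the same route the paper intends: the paper states Lemma \ref{lem:orient} as an immediate consequence of Lemma \ref{fuchsian} and the definition of $\Stab^+(\mathcal{C},\Gamma)$, leaving implicit exactly the verification you supply, namely that after conjugating by $T$ the sign $\varepsilon$ in $F$ governs whether the two components of the complement of $|z|^2=D$ are preserved or swapped. Your displayed identity $|\gamma z|^2-D=\varepsilon(|z|^2-D)/|\bar{\beta}z+\bar{\alpha}|^2$ checks out, so nothing is missing.
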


\begin{example} It is straightforward to construct many circles for which $\Stab(\mathcal{C},\Gamma_d)$ contains orientation-reversing elements. For example, let $k>0$ be an integer, and for any $d$ the element
\[
T_k = \begin{pmatrix}1+k\sqrt{-d} & (dk^2+2) \\ -1& -(1-k\sqrt{-d}) \end{pmatrix}
\]
acts as an orientation-reversing element on the hyperbolic plane spanning the circle centered at the origin and discriminant $D=(dk^2+2)$. Note that in this case $\tr(T_k)=2k\sqrt{-d}$ is imaginary but $\tr(T_k^2)=-4k^2d-2\in \mathbb{R}$, and this easliy establishes that $T_k$ acts orientation reversing.

One still needs to arrange for the corresponding surface to be closed, but this can also be arranged; e.g. $d=1$, $k=2$ gives $D=6$ and the quaternion algebra $\biggl ({\frac{-1,6}{\mathbb{Q}}}\biggr )$ is ramified at $2$ and $3$, hence the
surface group is cocompact.\end{example}

\section{Number theoretic lemmas}
\subsection{On quadratic residues}
Before proving our results about embedding totally geodesic surfaces, we prove some number theoretic lemmas that will be used in what follows. We first recall an estimate concerning small prime quadratic non-residues modulo $d$ from \cite{BouLin}.
\begin{theorem}\cite[{Theorem 5.1}]{BouLin}\label{lem:bou}
For any $\epsilon>0$ there is $\alpha>0$ so that for every large enough integer $D$ which is not a perfect square, and $N\geq |D|^{1/4+\epsilon}$ one has that the set $P$ of primes $N^\alpha \leq p\leq N$ with $\left(\frac{D}{p}\right)=-1$ satisfy
\[
\sum_{p\in P} \frac{1}{p} > \frac{1}{2}-\epsilon.
\]
\end{theorem}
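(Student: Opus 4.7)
Set $\chi(n) = \left(\frac{D}{n}\right)$; since $D$ is not a perfect square, $\chi$ is a non-principal real Dirichlet character of conductor $O(|D|)$. Using $\mathbf{1}_{\chi(p)=-1} = \tfrac{1}{2}(1 - \chi(p))$ for $p \nmid 2D$, I would write
\[
\sum_{p \in P} \frac{1}{p} \;=\; \frac{1}{2}\sum_{N^\alpha \le p \le N} \frac{1}{p} \;-\; \frac{1}{2}\sum_{N^\alpha \le p \le N} \frac{\chi(p)}{p} \;+\; o(1),
\]
where the $o(1)$ absorbs the contribution of primes dividing $2D$ (at most $O(\log\log|D|/\log|D|)$). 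Mertens's theorem gives $\sum_{N^\alpha \le p \le N} 1/p = \log(1/\alpha) + o(1)$, so choosing $\alpha$ slightly smaller than $e^{-(1-2\epsilon)}$ makes the main term exceed $\tfrac{1}{2} - \tfrac{\epsilon}{2}$. The proof then reduces to showing that the oscillatory sum $E(N) := \sum_{N^\alpha \le p \le N} \chi(p)/p$ is at most $\epsilon/2$ in absolute value once $D$ is sufficiently large.

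To bound $E(N)$, I would apply partial summation and estimate $S(x) := \sum_{p \le x} \chi(p)$ over $x \in [N^\alpha, N]$. For $x \ge |D|^{1/4+\epsilon'}$ the Burgess inequality
\[
\Bigl|\sum_{n \le x} \chi(n)\Bigr| \;\ll_r\; x^{1-1/r}\, |D|^{(r+1)/(4r^2) + o(1)}
\]
combined with a Vaughan or Heath--Brown decomposition (to convert character sums into character sums over primes) delivers a power-saving $|S(x)| \ll x^{1-\delta}$ for some $\delta = \delta(r, \epsilon') > 0$. This matches the hypothesis $N \ge |D|^{1/4+\epsilon}$ of the theorem: it places the upper half of the integration interval squarely in the Burgess regime, so its contribution to $E(N)$ after partial summation is $o(1)$.

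\textbf{Main obstacle.} The delicate range is $x \in [N^\alpha, |D|^{1/4+\epsilon'}]$, where Burgess gives nothing and the trivial estimate $\sum 1/p = O(1)$ is too crude to undercut the main term $\tfrac{1}{2}\log(1/\alpha)$. Extracting genuine cancellation here is the heart of the theorem. A naive route is to write the missing partial sum as $\log L(1,\chi)$ minus the already-controlled tails and invoke Siegel's lower bound $L(1,\chi) \gg_\epsilon |D|^{-\epsilon}$ against Landau's upper bound, yielding only $|\log L(1,\chi)| \ll_\epsilon \epsilon \log|D|$, which is marginal. The more robust strategy, and presumably the one used in \cite{BouLin}, combines a dyadic decomposition of the short range with mean-square / large-sieve estimates averaging over a well-chosen family of characters before specialising back to $\chi$. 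The main technical point is making the bound uniform in $D$, and the interplay between the Burgess cutoff $|D|^{1/4+\epsilon}$ and the lower cutoff $N^\alpha$ is precisely what forces $\alpha$ to be a strictly positive absolute constant in the conclusion.
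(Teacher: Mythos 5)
First, a point of reference: the paper gives no proof of this statement --- it is imported verbatim from Bourgain--Lindenstrauss \cite{BouLin} and used as a black box --- so your attempt has to be measured against the argument in \cite{BouLin}, which is of a completely different nature from what you propose. Your plan reduces the theorem to showing that $E(N)=\sum_{N^\alpha\le p\le N}\chi(p)/p$ is $O(\epsilon)$, i.e.\ to essentially full cancellation of the real character $\chi$ over \emph{primes} down to height $N^\alpha$. With your choice $\alpha$ near $e^{-1}$ and $N=|D|^{1/4+\epsilon}$, this requires cancellation in $\sum_{p\le x}\chi(p)$ for $x$ as small as $|D|^{(1/4+\epsilon)/e}\approx|D|^{0.09}$. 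Nothing remotely like this is known: Burgess controls $\sum_{n\le x}\chi(n)$ over \emph{integers} only, and only for $x\ge|D|^{1/4+\epsilon}$; even the bare existence of a prime non-residue below $|D|^{1/(4\sqrt{e})}\approx|D|^{0.152}$ is the current record, so for all we can prove every prime in $[N^\alpha,|D|^{0.15}]$ is a residue and $E(N)$ is bounded away from $0$. Neither of your proposed rescues closes this: $\log L(1,\chi)$ controls the prime sum only up to $O(\epsilon\log|D|)$ where you need $O(1)$, and a large-sieve bound averaged over a family of characters cannot be specialised back to the single character $\chi$ without losing the entire saving. You have correctly located the obstacle, but the obstacle is fatal to this route: if it could be overcome one would obtain equidistribution of residues and non-residues among primes in $[N^\alpha,N]$, a statement far stronger than the theorem and wide open.

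The point you are missing is that the theorem is a \emph{one-sided} statement and is proved by a Vinogradov-type sieve argument that never estimates a character sum over primes. The only analytic input is Burgess for $\sum_{n\le x}\chi(n)$ over integers, applied in its legitimate range $x\ge|D|^{1/4+\epsilon}$ (also to sums sieved by small moduli, which stay in that range after a fundamental-lemma truncation). This shows that, among integers $n\le N$ coprime to the conductor and free of prime factors below $N^\alpha$, asymptotically half satisfy $\chi(n)=-1$; the $N^\alpha$-smooth integers that are discarded have relative density $\rho(1/\alpha)<\epsilon$ once $\alpha$ is small. Every surviving $n$ with $\chi(n)=-1$ must have a prime factor $p$ with $\chi(p)=-1$, and by construction that prime lies in $[N^\alpha,N]$; the union bound $\#\{n\}\le\sum_{p\in P}N/p$ (with the coprimality condition carried through) then converts the lower bound on the count of such $n$ directly into $\sum_{p\in P}1/p>1/2-\epsilon$. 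In other words, the theorem asserts only that the non-residue primes in $[N^\alpha,N]$ cannot be too sparse to account for the non-residue integers produced by Burgess; it neither asserts nor requires any cancellation in $\sum\chi(p)/p$, which is exactly the quantity your approach is forced to control and cannot.
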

We will need a variation of Theorem \ref{lem:bou}.
\begin{corollary}\label{crit}
There exist constants $\alpha, \beta>0$ such that for all sufficiently large square free $d$, there is a collection $P_d$ of primes  $d^\alpha \leq p \leq d^{1/4-\beta}$ with $\left(\frac{-d}{p}\right)=-1$ so that
\[
\frac{3}{4}> \sum_{p\in P_d} \frac{1}{p} > \frac{1}{4}.
\]
\end{corollary}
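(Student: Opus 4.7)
The plan is to deduce the corollary from Theorem \ref{lem:bou} by a harvest-and-prune procedure: extract a large supply of primes $p$ with $\left(\frac{-d}{p}\right) = -1$ in a window slightly wider than needed, cut off the primes above $d^{1/4-\beta}$ using Mertens's theorem, and finally shrink the resulting set further if its reciprocal sum overshoots $3/4$.

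Since $d$ is a positive squarefree integer, $D = -d$ is never a rational square, so Theorem \ref{lem:bou} applies. Fix small $\epsilon, \beta > 0$ (to be pinned down at the end) and set $N = d^{1/4+\epsilon}$. Theorem \ref{lem:bou} yields some $\alpha_0 = \alpha_0(\epsilon) > 0$ and a set $P$ of primes $p \in [d^{\alpha_0(1/4+\epsilon)}, d^{1/4+\epsilon}]$ with $\left(\frac{-d}{p}\right) = -1$ and $\sum_{p \in P} 1/p > 1/2 - \epsilon$. By Mertens's theorem,
\[
\sum_{d^{1/4-\beta} \leq p \leq d^{1/4+\epsilon}} \frac{1}{p} \;=\; \log \frac{1/4+\epsilon}{1/4-\beta} + o(1) \quad \text{as } d \to \infty,
\]
so if $\epsilon$ and $\beta$ are chosen small enough that this logarithm is at most $1/8$, the truncated set $P' := P \cap [d^{\alpha}, d^{1/4-\beta}]$ with $\alpha := \alpha_0 (1/4+\epsilon)$ still satisfies $\sum_{p \in P'} 1/p > 1/2 - \epsilon - 1/8 - o(1) > 1/4$ for all sufficiently large $d$.

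It remains to prune so that the sum drops below $3/4$ without falling past $1/4$. If $\sum_{p \in P'} 1/p < 3/4$ already, take $P_d = P'$. Otherwise, enumerate $P' = \{p_1 < \cdots < p_K\}$ and let $S_k := \sum_{i \leq k} 1/p_i$. Let $k^*$ be the largest index with $S_{k^*} < 3/4$; this is well-defined since $S_0 = 0$ and $S_K \geq 3/4$. Then $S_{k^*+1} \geq 3/4$, so
\[
S_{k^*} \;\geq\; \frac{3}{4} - \frac{1}{p_{k^*+1}} \;\geq\; \frac{3}{4} - d^{-\alpha} \;>\; \frac{1}{4}
\]
once $d$ is large enough that $d^{-\alpha} < 1/2$. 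Setting $P_d = \{p_1, \dots, p_{k^*}\}$ completes the proof.

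All the analytic depth is packaged inside Theorem \ref{lem:bou}; the remainder is bookkeeping. The only genuine care required is the simultaneous choice of $\epsilon$ and $\beta$ small enough that the Mertens loss from the trimming step is dominated by the margin $1/2 - 1/4 = 1/4$ supplied by the Bourgain--Lindenstrauss lower bound, so that something still survives below the cutoff $d^{1/4-\beta}$. This is the one step where the constants have to be matched, but it is a routine balancing once $\epsilon, \beta$ are taken comparably small.
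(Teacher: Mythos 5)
Your proof is correct and follows essentially the same route as the paper: apply Theorem \ref{lem:bou} with $N=d^{1/4+\epsilon}$, use Mertens' theorem to discard the primes above $d^{1/4-\beta}$ at small cost, and then truncate the partial sums using the fact that each term $1/p\leq d^{-\alpha}$ is small to land in the interval $(1/4,3/4)$. The only cosmetic difference is that the paper selects the first partial sum exceeding $1/4$ while you select the last one below $3/4$; both arguments are equivalent.
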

\begin{proof}
For a given $\epsilon>0$, let $N=d^{1/4+\epsilon}$ and let $\alpha(\epsilon)$ be the constant implied by Theorem \ref{lem:bou}. Then we have
\[
\sum_{p\in P} \frac{1}{p} - \sum_{\substack{p\in P \\ p<d^{1/4-\epsilon}}} \frac{1}{p} < \log \log d^{1/4+\epsilon} - \log \log d^{1/4-\epsilon}+o(1) = \log \frac{1/4+\epsilon}{1/4-\epsilon}+o(1) = O(\epsilon),
\]
where we used Mertens' second theorem in the first inequality and $o(1)$ is absorbed into $O(\epsilon)$ by taking $d$ sufficiently large. Therefore we may take $\epsilon>0$ sufficiently small (yet fixed) so that
\[
\sum_{\substack{p\in P \\ p<d^{1/4-\epsilon}}} \frac{1}{p} > \frac{1}{4}
\]
for all sufficiently large $d$. We let $\beta$ be this choice of $\epsilon$ and let $\alpha=\alpha(\beta)$. Now let $p_1<p_2 < \ldots < p_n <d^{1/4-\beta}$ be the complete set of primes in $\{p\in P ~:~ p<d^{1/4-\beta}\}$. Then the partial sums
\[
S_k=\sum_{i=1}^k \frac{1}{p_i}
\]
satisfy
\[
S_{k+1}-S_k = \frac{1}{p_{k+1}} < d^{-\alpha}
\]
which we may assume to be less than $\frac{1}{4}$ under the assumption of $d$ being sufficiently large. Also we have $S_1<1/2$ and $S_n > 1/4$. Therefore there exists $1<  k\leq n$ such that $1/4<S_k <3/4$. We take
\[
P_D = \{p_1, \ldots, p_k\},
\]
to complete the proof.
\end{proof}
The main purpose of this section is to estimate a weighted summation over the following set
\begin{equation}\label{D}
\mathcal{D}=\{0<r<\frac{d}{4}~:~ r \text{ is a square modulo }d,~r\neq \alpha^2 + d\beta^2~\forall \alpha,\beta \in \mathbb{Q} \},
\end{equation}
where $d$ is a square free integer. This is because, every embedded totally geodesic surface we construct has a discriminant which is a square modulo $d$, while the discriminant not being of the form $\alpha^2 + d\beta^2 $ with $\alpha,\beta \in \mathbb{Q}$ is a necessary and sufficient condition for the surface being closed (Lemma \ref{straight_line}). The main theorem of the section is the following. We use the notation $\omega(n)$ to be the number of distinct prime divisors of $n$:
\begin{theorem}\label{lem2}
For all sufficiently large $d$, we have
\[
\sum_{r\in \mathcal{D}} 2^{\omega(d)-\omega(\gcd(d,r))} \gg d.
\]
\end{theorem}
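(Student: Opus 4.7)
The plan is to exploit a local obstruction at an inert prime. If $p$ is inert in $\mathbb{Q}(\sqrt{-d})$, i.e., $\left(\frac{-d}{p}\right) = -1$, then $\mathbb{Q}_p(\sqrt{-d})/\mathbb{Q}_p$ is unramified, so the image of the local norm map consists of elements of $\mathbb{Q}_p^\times$ of even $p$-adic valuation. Consequently, if $r \in \mathbb{Q}^\times$ is of the form $\alpha^2 + d\beta^2 = N(\alpha + \beta \sqrt{-d})$ with $\alpha, \beta \in \mathbb{Q}$, then $v_p(r)$ must be even for every inert $p$. In particular, if some inert $p$ satisfies $p \| r$ (meaning $v_p(r) = 1$), then $r$ is not of the form $\alpha^2 + d\beta^2$. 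Taking $P_d$ from Corollary \ref{crit}, the theorem reduces to producing many $s \in \mathbb{Z}/d\mathbb{Z}$ for which $r(s) := s^2 \bmod d \in (0, d/4)$ and $p \| r(s)$ for some $p \in P_d$.

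The key observation is that, since for squarefree $d$ the weight $w(r) := 2^{\omega(d) - \omega(\gcd(d, r))}$ equals the number of square roots of $r$ in $\mathbb{Z}/d\mathbb{Z}$ when $r$ is a QR modulo $d$,
\begin{equation*}
\sum_{r \in \mathcal{D}} w(r) = \#\{s \in \mathbb{Z}/d\mathbb{Z} : s^2 \bmod d \in \mathcal{D}\}.
\end{equation*}
It therefore suffices to lower bound the right-hand side.

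For each $p \in P_d$ set $A_p = \#\{s \in [0, d) : r(s) \in (0, d/4) \text{ and } p \| r(s)\}$, and for $p \neq q \in P_d$ define $A_{p, q}$ analogously with both divisibility conditions imposed. Using the factorization $w(r) = \prod_{q \mid d}(1 + \chi_q(r))$ (where $\chi_q$ is the Legendre symbol mod $q$ with $\chi_q(0) = 1$) and parameterizing by $r = p u$ with $u \in (0, d/(4p))$ and $p \nmid u$, the quantity $A_p = \sum_u w(pu)$ unfolds into a sum of character sums indexed by subsets $T \subseteq \{q \mid d\}$. The trivial term ($T = \emptyset$) contributes $(d/(4p))(1 - 1/p)$, while each non-principal character sum is $O(\sqrt{d} \log d)$ by Polya-Vinogradov. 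Since there are $2^{\omega(d)} = d^{o(1)}$ subsets and $p \leq d^{1/4 - \beta}$, the main term dominates the total error, giving $A_p = (1 + o(1))\, d/(4p)$; the same argument yields $A_{p, q} = (1 + o(1))\, d/(4 p q)$.

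Setting $Y(s) = \#\{p \in P_d : r(s) \in (0, d/4) \text{ and } p \| r(s)\}$, the Cauchy-Schwarz inequality gives
\begin{equation*}
\#\{s : Y(s) \geq 1\} \geq \frac{\left(\sum_s Y(s)\right)^2}{\sum_s Y(s)^2} = \frac{\left(\sum_{p \in P_d} A_p\right)^2}{\sum_{p, q \in P_d} A_{p, q}}.
\end{equation*}
Writing $S := \sum_{p \in P_d} 1/p > 1/4$, the estimates above yield $\sum_p A_p = (1 + o(1))(d/4) S$ and $\sum_{p, q} A_{p, q} = (1 + o(1))(d/4)(S + S^2)$, so $\#\{s : Y(s) \geq 1\} \geq (1 + o(1))(d/4)\, S/(1 + S) \gg d$, which completes the proof.

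The main obstacle is the uniform character-sum analysis of $A_p$ and $A_{p, q}$: one must verify that the error, accumulated over the $2^{\omega(d)}$ non-principal characters appearing in the expansion of $\prod_{q \mid d}(1 + \chi_q(\cdot))$ together with the short-interval constraint $r(s) \in (0, d/4)$ and the extra divisibility conditions, remains negligible compared to the main terms of size $d/p$ and $d/(p q)$. The bounds $p, q \leq d^{1/4 - \beta}$ from Corollary \ref{crit}, combined with the standard estimate $2^{\omega(d)} = d^{o(1)}$, provide just enough room to apply Polya-Vinogradov.
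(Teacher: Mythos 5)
Your proposal is correct and follows essentially the same route as the paper: the same obstruction (an inert prime exactly dividing $r$ forces $r\neq\alpha^2+d\beta^2$, which the paper derives from Legendre's theorem and you derive from the local norm map), the same identification of the weight $2^{\omega(d)-\omega(\gcd(d,r))}$ with $\prod_{p\mid d}(1+\chi_p(r))$, the same Polya--Vinogradov estimates for $A_p$ and $A_{p,q}$ exploiting $p,q\leq d^{1/4-\beta}$ from Corollary \ref{crit}, and the same moment data at the end. The only deviation is cosmetic: you finish with Cauchy--Schwarz, obtaining $\gg d\cdot S/(1+S)$, where the paper uses a two-term inclusion--exclusion giving $\gg d(S-S^2)$; your variant has the minor advantage of not needing the upper bound $S<3/4$ from Corollary \ref{crit}.
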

\begin{proof}
To estimate the first summation, we set $f(r)=2^{\omega(d)-\omega(\gcd(d,r))}$ if $r$ is a square modulo $d$ and $0$ otherwise. One can check that $f$ has the following closed form:
\[
f(r)=\prod_{p|d,~p:\text{ odd}} \left(\left(\frac{r}{p}\right)+1\right),
\]
where $\left(\frac{\cdot}{\cdot}\right)$ is the Jacobi symbol. Now we set $\mathrm{P}$ to be the set of odd primes $p$ such that
\[
\left(\frac{-d}{p}\right) = -1.
\]
Note that Legendre's theorem implies that $r$ is of the form $\alpha^2+d\beta^2$ if and only if
\begin{itemize}
\item $r$ is a square modulo $d$, and
\item $-d$ is a square modulo the squarefree part of $r$.
\end{itemize}
Therefore we have
\begin{equation}\label{g:lower}
\sum_{r\in \mathcal{D}} 2^{\omega(d)-\omega((d,r))} \geq \sum_{\substack{0<r<d/4\\ \text{the square free part of }r \text{ has a factor in }\mathrm{P}}} f(r).
\end{equation}
To estimate the summation on the right hand side, we first let $q$ be a squarefree integer whose prime factors are all in $\mathrm{P}$, and let $r=qm$ with $\gcd(m,q)=1$. Then
\[
\sum_{\substack{0<r=mq < \frac{d}{4}\\ \gcd(m,q)=1}} f(r) =  \sum_{\substack{0<m < \frac{d}{4q}\\ \gcd(m,q)=1}} \sum_{e|d,~\gcd(e,2)=1} \left(\frac{q}{e}\right)\left(\frac{m}{e}\right)
 = \left\lfloor\frac{d}{4q}\right\rfloor  +  \sum_{\substack{e|d,~e>1\\ \gcd(e,2)=1}} \left(\frac{q}{e}\right) \sum_{\substack{0<m < \frac{d}{4q}\\ \gcd(m,q)=1}}\left(\frac{m}{e}\right).
\]
Inputting the Polya-Vinogradov inequality \cite{dav},
\[
\left|\sum_{r=N}^{N+M} \chi(r)\right| <\sqrt{q_\chi}\log q_\chi,
\]
where $q_\chi$ is the conductor of $\chi$, we see that for $e>1$,
\begin{multline*}
\left|\sum_{\substack{0<m < \frac{d}{4q}\\ \gcd(m,q)=1}}\left(\frac{m}{e}\right)\right| =\left| \sum_{f|q}\mu(f)\sum_{0<mf < \frac{d}{4q}}\left(\frac{mf}{e}\right)\right|=\left| \sum_{f|q}\mu(f)\left(\frac{f}{e}\right)\sum_{0<m < \frac{d}{4fq}}\left(\frac{m}{e}\right)\right| \\
\leq \sum_{f|q}\left|\sum_{0<m < \frac{d}{4fq}}\left(\frac{m}{e}\right)\right| < \tau(q)\sqrt{e}\log e \leq \tau(q) \sqrt{e}\log d,
\end{multline*}
\footnote{We let $\tau_x(n) = \sum_{d|n}d^x$, and for $x=0$, we may omit $0$ and write $\tau=\tau_0$ instead.}
and so
\begin{equation}\label{eqeq}
\left|\sum_{\substack{0<r=mq < \frac{d}{4}\\ (m,q)=1}} f(r)-\frac{d}{4q}\right|< 1+ \tau(q)\tau_{\frac{1}{2}}(d) \log d.
\end{equation}

Now observe that the right hand side of \eqref{g:lower} is bounded from below by the following sum
\begin{equation}\label{final}
S=\sum_{p\in P_d} \sum_{\substack{0<r=mp < \frac{d}{4}\\ \gcd(m,p)=1}} f(r) - \sum_{p_1\neq p_2\in P_d} \sum_{\substack{0<r=mp_1p_2 < \frac{d}{4}\\ \gcd(m,p_1p_2)=1}} f(r)
\end{equation}
by the inclusion-exclusion principle, where $P_d\subset \mathrm{P}$ is the set of primes given in Corollary \ref{crit}. Because all primes in $P_d$ is less than $d^{1/4-\beta}$, we see that $p_1p_2 < d^{1/2-2\beta}$. Hence by assuming that $d$ is sufficiently large, we may assume for any given $\varepsilon>0$ using \eqref{eqeq} that
\[
\sum_{\substack{0<r=mp < \frac{d}{4}\\ \gcd(m,p)=1}} f(r) > (1-\varepsilon)\frac{d}{4p}
\]
for $p\in P_d$, and that
\[
\sum_{\substack{0<r=mp_1p_2 < \frac{d}{4}\\ \gcd(m,p_1p_2)=1}} f(r) <(1+\varepsilon) \frac{d}{4p_1p_2}
\]
for $p_1,p_2 \in P_d$. In particular, we have
\[
S > \sum_{p\in P_d}(1-\varepsilon)\frac{d}{4p}  - \sum_{p_1\neq p_2\in P_d}(1+\varepsilon) \frac{d}{4p_1p_2}  > \frac{d}{4}  \left((1-\varepsilon) \sum_{p\in P_d} \frac{1}{p} - (1+\varepsilon) \left(\sum_{p\in P_d} \frac{1}{p}\right)^2\right),
\]
which we see that is $\gg d$ by taking $\varepsilon$ sufficiently small and using
\[
1/4<\sum_{p\in P_d} \frac{1}{p} < 3/4
\]
by Corollary \ref{crit}.
\end{proof}
\subsection{On binary hermitian forms}
To prove an upper bound of the number of embedded totally geodesic surfaces, we need to understand the least integer represented by a binary hermitian form, in the spirit of Corollary \ref{para:cor}. To this end, we show in this section that every primitive binary hermitian form over
$\mathcal{O}_d$ of discriminant $D$ represents a positive integer $\ll \gcd(d,D) \log^2 \frac{D}{\gcd(d,D)}$. Firstly, we recall the local-global principle for indefinite integral forms.
\begin{lemma}{\cite[p. 131]{MR522835}}
Let $q$ be a regular indefinite integral form in $n\geq 4$ variables, and let $a\neq 0$ be an integer. Suppose that $a$ is represented by $q$ over all $\mathbb{Z}_p$. Then $a$ is represented by $q$ over $\mathbb{Z}$.
\end{lemma}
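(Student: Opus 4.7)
The plan is to deduce the lemma from two classical pillars: the Hasse--Minkowski principle for rational representation of quadratic forms, and Eichler's strong approximation theorem for the spin group of an indefinite form in $\geq 4$ variables.

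First I would establish rational representability. Since $q$ represents $a$ over every $\mathbb{Z}_p$, it certainly represents $a$ over every $\mathbb{Q}_p$. The indefiniteness of $q$ over $\mathbb{R}$, combined with $a\neq 0$, ensures representability over $\mathbb{R}$ as well. Hasse--Minkowski then produces $v_0\in\mathbb{Q}^n$ with $q(v_0)=a$. This step uses only regularity of $q$ and makes no appeal to the dimension hypothesis $n\geq 4$.

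Next I would upgrade $v_0$ to an integral solution. The affine quadric $Y=\{v : q(v)=a\}$ is a homogeneous space for $\mathrm{O}(q)$, so the given $\mathbb{Z}_p$-points $v_p$ and the rational point $v_0$ are related by elements $g_p\in\mathrm{O}(q)(\mathbb{Q}_p)$ with $g_p\cdot v_0=v_p$. The aim is to produce a single $g\in\mathrm{O}(q)(\mathbb{Q})$ that is $p$-integral at every prime. For this I would invoke strong approximation for $\mathrm{Spin}(q)$: because $n\geq 4$ and $q$ is indefinite, $\mathrm{Spin}(q)$ is simply connected, almost simple, and noncompact at the archimedean place, so $\mathrm{Spin}(q)(\mathbb{Q})$ is dense in the finite adelic points. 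Combining this density with the spinor-genus theory of Eichler, one obtains a rational isometry carrying $v_0$ simultaneously into $\mathbb{Z}_p^n$ for every $p$, hence into $\mathbb{Z}^n$.

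The main obstacle is this second step: the passage from rational to integral. Concretely one must verify that, under the stated hypotheses, every spinor genus in the genus of $q$ coincides with a class, so that local representability implies representability by $q$ itself rather than merely by some form in its genus. This is where $n\geq 4$ and indefiniteness are essential; for ternary indefinite forms there can be spinor-exceptional integers obstructing the conclusion, and for definite forms the archimedean place obstructs strong approximation. Once this ingredient is granted the lemma follows, and a complete treatment can be extracted from Cassels's \emph{Rational Quadratic Forms} or O'Meara's \emph{Introduction to Quadratic Forms}.
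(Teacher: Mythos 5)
The paper does not prove this lemma at all: it is imported verbatim from Cassels \cite[p.~131]{MR522835} and used as a black box in the proof of Theorem \ref{upper}, so there is no internal argument to compare yours against. Judged on its own terms, your outline follows the standard route to this classical theorem, and the architecture is right: Hasse--Minkowski (plus indefiniteness at the real place) gives a rational representation, and the local-to-global upgrade is indeed governed by strong approximation for $\mathrm{Spin}(q)$ together with spinor genus theory. You also correctly locate where each hypothesis is used --- $n\geq 4$ to rule out spinor-exceptional integers, indefiniteness to make the archimedean place noncompact for strong approximation.

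The genuine gap is that the decisive second step is described rather than proved, and the description elides the two separate ingredients it needs. What is actually required is: (i) Eichler's theorem that for an indefinite form in $n\geq 3$ variables each spinor genus consists of a single class, and (ii) the fact that for $n\geq 4$ an integer represented by the genus is represented by \emph{every} spinor genus in the genus. Your text conflates these into ``every spinor genus coincides with a class,'' which is only ingredient (i) and by itself does not rule out that $a$ is represented by some other class in the genus. Ingredient (ii) is where the real work lies: in the orbit picture, after using Witt's extension theorem to produce $g_p\in\mathrm{O}(q)(\mathbb{Q}_p)$ with $g_p v_0=v_p$, one must adjust the $g_p$ by elements of the stabilizer of $v_0$ --- an orthogonal group in $n-1\geq 3$ variables, whose local spinor norms are surjective onto the units --- so that the resulting family lies in the image of $\mathrm{Spin}$ and strong approximation applies; $\mathrm{O}(q)$ itself does not satisfy strong approximation. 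Since you close by saying this ingredient can be ``extracted from Cassels,'' i.e.\ from the very source the lemma is quoted from, the proposal as written is an annotated citation rather than a proof. That is acceptable for the role the lemma plays in this paper, but the missing step should either be carried out or cited as a precise numbered theorem (e.g.\ the relevant results of Chapters 9--11 of \cite{MR522835}) rather than waved at.
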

We also need the following classical fact about solving an equation in $\mathbb{Z}_p$.
\begin{proposition}\label{prop}
Let $f(x)$ be a polynomial with integral coefficients. One can solve $f(x)=0$ in $\mathbb{Z}_p$ if there is an integer $x_0$ such that
\[
f(x_0) \equiv 0 \pmod{p}
\]
and
\[
f'(x_0) \not\equiv 0 \pmod{p}.
\]
\end{proposition}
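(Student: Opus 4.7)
The plan is to prove this standard form of Hensel's Lemma via Newton iteration. I construct inductively a sequence of integers $x_0, x_1, x_2, \ldots$ satisfying $f(x_n) \equiv 0 \pmod{p^{n+1}}$ and $x_{n+1} \equiv x_n \pmod{p^{n+1}}$. Since the consecutive differences then lie in $p^{n+1}\mathbb{Z}$, the sequence is $p$-adically Cauchy and converges to some $x_\infty \in \mathbb{Z}_p$; by continuity of $f$ together with $v_p(f(x_n)) \geq n+1 \to \infty$, one gets $f(x_\infty) = 0$.

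The real content lies in the inductive step. Given $x_n$ with the required properties, I write $x_{n+1} = x_n + p^{n+1} y$ for an integer $y$ to be chosen, and expand via Taylor's formula:
\[
f(x_n + p^{n+1} y) \;=\; f(x_n) + p^{n+1} y\, f'(x_n) + p^{2(n+1)} R(x_n, y),
\]
where $R(X,Y) \in \mathbb{Z}[X,Y]$ (this is just the integrality of the higher Taylor coefficients for a polynomial in $\mathbb{Z}[X]$). Writing $f(x_n) = p^{n+1} u_n$ with $u_n \in \mathbb{Z}$, the target congruence $f(x_{n+1}) \equiv 0 \pmod{p^{n+2}}$ becomes
\[
u_n + y\, f'(x_n) \;\equiv\; 0 \pmod{p},
\]
a linear congruence in $y$. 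It is solvable because $f'(x_n) \not\equiv 0 \pmod{p}$: the congruence $x_n \equiv x_0 \pmod p$ from the inductive hypothesis, combined with $f' \in \mathbb{Z}[X]$, gives $f'(x_n) \equiv f'(x_0) \pmod p$, and the right-hand side is nonzero mod $p$ by hypothesis. Any lift of the solution $y$ to $\mathbb{Z}$ yields the desired $x_{n+1}$.

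The argument is essentially mechanical and presents no serious obstacle. The one point requiring genuine attention is the stability of the nonvanishing of $f'(x_n) \pmod p$ throughout the iteration, and this is baked in automatically by the congruence $x_n \equiv x_0 \pmod p$ maintained at every stage. Everything else is bookkeeping of $p$-adic valuations.
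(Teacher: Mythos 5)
Your proof is correct: this is the standard Newton-iteration proof of Hensel's lemma, and all the key points (integrality of the higher Taylor coefficients, solvability of the linear congruence in $y$, and the persistence of $f'(x_n)\not\equiv 0 \pmod p$ via $x_n\equiv x_0 \pmod p$) are handled properly. The paper itself offers no proof --- it states the proposition as a classical fact --- so there is nothing to compare against; your argument is the canonical one.
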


\begin{theorem}\label{upper}
Let $a,c \in \mathbb{Z}$ and let $B\in \mathcal{O}_d$. Let $Q(x,y) = a|x|^2 + 2\mathrm{Re}(Bx\bar{y})+c|y|^2$ be a primitive binary hermitian form, i.e., no positive integer other than $1$ divides all $a,B,c$. Let $D=|B|^2-ac$. Then $Q(x,y)$ represents a positive integer whose modulus is $O(\gcd(d,D) \log^2 (D/\gcd(d,D)))$.
\end{theorem}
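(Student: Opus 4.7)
The plan is to view $Q(x,y)$ as a regular, indefinite, quaternary integral quadratic form over $\mathbb{Z}$ in the variables $x_1,x_2,y_1,y_2$ obtained by expanding $x=x_1+\omega x_2$ and $y=y_1+\omega y_2$ on a $\mathbb{Z}$-basis $\{1,\omega\}$ of $\mathcal{O}_d$. A short computation shows the resulting form has signature $(2,2)$ and determinant a nonzero multiple of $D^2$, so the local--global principle quoted above reduces the theorem to exhibiting a positive integer $n$ of the claimed size that $Q$ represents over $\mathbb{Z}_p$ for every prime $p$ (representability over $\mathbb{R}$ being automatic by the signature).

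The bulk of the proof is then a prime-by-prime local analysis. For any $p\nmid 2dD$, the reduction $Q\bmod p$ is a non-degenerate quaternary form over $\mathbb{F}_p$, hence universal (every element of $\mathbb{F}_p$ is a nontrivial value), and Proposition \ref{prop} lifts any such representation to one over $\mathbb{Z}_p$. For $p\mid D$ with $p\nmid d$, local representability of $n$ imposes a Legendre-symbol condition on $n\bmod p$. For ramified primes $p\mid\gcd(d,D)$ the local condition forces a specific divisibility of $n$ by a factor of $\gcd(d,D)$ -- this is precisely the origin of the $\gcd(d,D)$ factor in the final bound -- while $p=2$ is handled by a finite case analysis that requires extra care when $d\equiv 3\pmod 4$ (since then $\mathcal{O}_d=\mathbb{Z}[(1+\sqrt{-d})/2]$ introduces additional $2$-adic subtleties). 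Together these say that the positive integers $Q$ locally represents are exactly those of the form $\gcd(d,D)\cdot m$ with $m$ lying in a prescribed union of residue classes modulo a divisor of $D/\gcd(d,D)$.

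It then remains to produce such an $m$ of size $O(\log^2(D/\gcd(d,D)))$. Writing the indicator function of the residue conditions as a linear combination of Dirichlet characters modulo divisors of $D/\gcd(d,D)$ and applying Polya--Vinogradov, in the spirit of the character-sum manipulations in the proof of Theorem \ref{lem2}, shows that all non-principal character contributions are dominated by the main term once $m$ is slightly larger than $\log^2(D/\gcd(d,D))$, yielding a valid $m$ in that range.

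The main obstacle will be the local analysis at the ramified primes $p\mid\gcd(d,D)$ and at $p=2$, where ramification of $\mathbb{Q}(\sqrt{-d})/\mathbb{Q}$ interacts non-trivially with the coefficients $(a,B,c)$. One has to use the primitivity hypothesis to rule out a universal $p$-adic obstruction (so that the locally represented set is non-empty), and to show that the constraints cleanly decompose as a fixed divisibility by a factor of $\gcd(d,D)$ together with purely multiplicative character conditions modulo $D/\gcd(d,D)$; matching the theorem's sharp dependence on $\gcd(d,D)$ is the delicate part of this decomposition.
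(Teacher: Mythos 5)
Your overall architecture matches the paper's: expand $Q$ into a quaternary integral quadratic form over $\mathbb{Z}$, invoke the Cassels local--global principle for indefinite forms in $\geq 4$ variables, do a prime-by-prime local analysis (with the ramified primes and $p=2$ requiring care, and with the factor $\gcd(d,D)$ emerging from the primes dividing $\gcd(d,D)$), and finally produce a small admissible integer satisfying the resulting congruence/coprimality conditions. The paper additionally first divides the quaternary form by its content $e=\gcd(a,c,2B_1,2dB_2)=2^ke_0$ (the form you write down need not be primitive over $\mathbb{Z}$ even though $Q$ is primitive over $\mathcal{O}_d$); you gloss over this, but it is a repairable technicality.

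The genuine gap is in your final step. After the local analysis, what one must do is find a positive integer $m\ll \log^2(D/\gcd(d,D))$ lying in a prescribed residue class modulo $8\gcd(d,D)/e_0$ and coprime to the odd part $M$ of $D/\gcd(d,D)$. You propose to detect such an $m$ by expanding the conditions in Dirichlet characters and applying Polya--Vinogradov. This cannot work: for a character of conductor $q\leq M$, Polya--Vinogradov only beats the main term once the interval has length $\gg \sqrt{q}\log q$, so this route produces an admissible $m$ of size roughly $\sqrt{M}\log M=\sqrt{D/\gcd(d,D)}\log(\cdots)$, not $\log^2 M$. (Even elementary inclusion--exclusion over the prime divisors of $M$ only gives $m\ll 2^{\omega(M)}M/\phi(M)$, which can be of size $\exp(c\log M/\log\log M)$.) The $\log^2$ in the theorem is precisely Iwaniec's bound $j(n)=O(\log^2 n)$ on the Jacobsthal function --- every interval of $O(\log^2 M)$ consecutive integers contains one coprime to $M$ --- and this is the key quantitative input the paper uses. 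Without it, your argument proves only a bound of order $\gcd(d,D)\sqrt{D/\gcd(d,D)}$ up to logarithms, which is too weak for the application in Theorem \ref{thm:upper} (where one needs $D\ll \gcd(d,D)^2\log^4(D/\gcd(d,D))$, i.e.\ the represented integer must be $\ll\gcd(d,D)\log^2(\cdots)$). A secondary, minor point: your claim that the locally represented integers are \emph{exactly} a union of residue classes overstates what is needed or proved; the paper only establishes a sufficient condition (representability of $\tfrac{8\gcd(d,D)}{e_0}N+\alpha$ whenever it is coprime to $M$), and likewise your ``Legendre-symbol condition'' at primes $p\mid D$, $p\nmid d$ is really just a coprimality condition in the paper's analysis.
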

\begin{proof}
Let $B=B_1+\sqrt{-d}B_2$. Then the binary hermitian form $Q$ over $\mathcal{O}_d$ is a quaternary quadratic form over $\mathbb{Z}$:
\[
q(x_1,x_2,y_1,y_2) = ax_1^2+adx_2^2+2B_1(x_1y_1+dx_2y_2)+ 2dB_2 ( x_2y_1 - x_1y_2) + cy_1^2+cdy_2^2,
\]
whose discriminant is $16d^2D^2$.

Let $(a,c,2B_1,2dB_2)=e=2^k e_0$, where $e_0$ is odd, and we note here that $k\leq 2$. Let $a'=a/e,c'=c/e,~B_1'=B_1/e,~d'=d/e$. Then
\[
\frac{q}{e}=q'(x_1,x_2,y_1,y_2) = a'x_1^2+a'dx_2^2+2B_1'(x_1y_1+dx_2y_2)+ 2d'B_2 ( x_2y_1 - x_1y_2) + c'y_1^2+c'dy_2^2,
\]
is primitive. We claim that $q'$ represents an integer whose modulus is $O\left(\frac{\gcd(d,D)}{e_0} \log^2 (D/\gcd(d,D))\right) $.

We first note for an odd prime $p| \gcd(d,D)/e_0$ that $q'$ represents at least one rational integer $\alpha_p$ in $\{1,2,\ldots,p\}$ over $\mathbb{Z}_p$. For $p=2$, observe that a primitive quadratic form represents at least one rational integer $\alpha_2$ in $\{1,2,\ldots,8\}$ over $\mathbb{Z}_2$. From this 
data we choose $0<\alpha \leq 8\gcd(d,D)/e_0$ to be the integer such that $\alpha\equiv \alpha_p \pmod{p}$ for odd primes $p$ and $\alpha\equiv \alpha_2 \pmod{8}$ using Chinese remainder theorem.

Observe that if $p$ does not divide $dD$, then $q'$ represents any rational integer over $\mathbb{Z}_p$. We now consider three cases:\\[\baselineskip]

\noindent
\textbf{Case 1:}~If $p|e$, then $p\nmid B_2$, because $q$ is primitive. So if we set
\[
f(x)=q'(0,x,1,0) = a'dx^2+2d'B_2 x + c',
\]
we have $f(x) \equiv 2d'B_2 x + c' \pmod{p}$ which represents any number mod $p$, and $f'(x) \equiv 2d'B_2 \pmod{p}$ is nonzero. Therefore $f(x)$ represents any rational integer over $\mathbb{Z}_p$ by
Proposition \ref{prop}.\\[\baselineskip]
\textbf{Case 2:}~If $p|d$, but $p \nmid D$, then
\[
q'(x,0,y,0) = a'x^2 + 2B_1' xy + c'y^2
\]
which is a binary quadratic form with the discriminant that is congruent to $4D/e$ modulo $p$. So $q'(x,0,y,0)$ represents any rational integer of the form $p^{2k}m$ with $p\nmid m$, over $\mathbb{Z}_p$. Likewise, because $p\| d$,
\[
q'(0,x,0,y) = d(a'x^2+2B_1'xy+c'y^2)
\]
represents any rational integer of the form $p^{2k+1}m$ with $p\nmid m$, over $\mathbb{Z}_p$.\\[\baselineskip]
\textbf{Case 3:} If $p|D=B_1^2+dB_2^2-ac$, we consider two scenarios: when $p|a$ and $p|c$, and when $p\nmid a$ or $p \nmid c$. In the first scenario, $p\nmid B_1$, and so
\[
f(x)=q'(x,0,1,0) = a'x^2 + 2B_1' x + c'
\]
represents any rational integer over $\mathbb{Z}_p$, by Proposition \ref{prop}.

In the second scenario, we assume without loss of generality that $p \nmid a$. Then
\[
q'(x,y,0,0) = a'(x^2+ dy^2),
\]
and because $p\nmid d$, this represents any rational integer coprime to $p$.

Combining all these, we see that $q'$ represents
\begin{equation}\label{repre}
\frac{8\gcd(d,D)}{e_0}N + \alpha
\end{equation}
if it is coprime to the odd part of $D/\gcd(d,D)$, which we denote by $M$. Let $\beta$ be the multiplicative inverse of $\frac{8\gcd(d,D)}{e}$ modulo $M$. Hence we want to find $N$ such that $N+\alpha\beta$ is coprime to $M$.

We recall that the Jacobsthal function $j(n)$ is the smallest number $m$ such that every sequence of $m$ consecutive integers contains an integer coprime to $n$, and that we have $j(n) = O(\log ^2 n)$ \cite{MR499895}.

So one can choose $N  \ll \log^2 M \ll \log^2 (D/\gcd(d,D))$ to make \eqref{repre} represented by $q'$. In other words, $q'$ represents an integer
\[
\ll \frac{8\gcd(d,D)}{e_0} \log^2 (D/\gcd(d,D))
\]
and so $q$ represents an integer
\[
\ll e\frac{8\gcd(d,D)}{e_0} \log^2 (D/\gcd(d,D))   \ll \gcd(d,D) \log^2 (D/\gcd(d,D)).
\]
where we used the fact that $e/e_0 = 2^k \leq 4$.
\end{proof}
Finally, we recall a general bound for the number of immersed totally geodesic surfaces having a given discriminant.
\begin{lemma}\label{upper2}
For each fixed $D$, there are at most
\[
4\tau(\gcd(d,D))^2=4\prod_{p|\gcd(d,D)} 4
\]
inequivalent binary hermitian forms of the discriminant $D$.
\end{lemma}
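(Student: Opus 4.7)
The plan is to identify the $\PSL(2,\mathcal{O}_d)$-equivalence classes of primitive binary Hermitian forms of discriminant $D$ with conjugacy classes of certain orders in a fixed quaternion algebra, and then to bound that set via genus theory. Since $d$, and hence $\gcd(d,D)$, is squarefree, the target bound reads $4\cdot 4^{\omega(\gcd(d,D))}$.

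The first step is to set up a form-to-order dictionary in the spirit of the classical Gauss form-ideal correspondence for binary integral quadratic forms. As noted in \S\ref{parameter}, every primitive Hermitian form $A$ of discriminant $D$ gives rise to the arithmetic Fuchsian group $\Stab^+(\mathcal{C},\Gamma_d)$ sitting inside $\Gamma_d$, with invariant quaternion algebra $\bigl(\tfrac{-d,D}{\Q}\bigr)$. Let $\mathcal{R}_A\subset\bigl(\tfrac{-d,D}{\Q}\bigr)$ be the $\mathcal{O}_d$-order generated by $\Stab^+(\mathcal{C},\Gamma_d)$ inside its standard embedding. One then checks that two forms $A,A'$ of discriminant $D$ are $\PSL(2,\mathcal{O}_d)$-equivalent if and only if $\mathcal{R}_A$ and $\mathcal{R}_{A'}$ are conjugate inside $\bigl(\tfrac{-d,D}{\Q}\bigr)$; equivalently, $A\mapsto\mathcal{R}_A$ descends to an injection from form classes to order conjugacy classes.

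The second step is to bound the number of conjugacy classes appearing in the image. All of the orders $\mathcal{R}_A$ lie in a single genus: at every rational prime $p\nmid\gcd(d,D)$ the local order is forced by the global invariant $D$, whereas at primes $p\mid\gcd(d,D)$ only an Eichler-type local level datum can vary. The resulting type number is then controlled by the $2$-torsion of a narrow ray class group of $\Q(\sqrt{-d})$ whose conductor is supported on $\gcd(d,D)$, and genus theory bounds that $2$-torsion by $2^{2\omega(\gcd(d,D))}=4^{\omega(\gcd(d,D))}$. Multiplying by a constant factor of at most $4$ to absorb the ambiguities coming from units in $\mathcal{O}_d^\times$, the orientation choice of Lemma~\ref{lem:orient}, and the passage between $\SL$ and $\PSL$, yields exactly $4\cdot 4^{\omega(\gcd(d,D))}=4\tau(\gcd(d,D))^2$.

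The main obstacle is the first step: verifying that distinct $\PSL(2,\mathcal{O}_d)$-orbits of primitive forms of discriminant $D$ produce genuinely non-conjugate orders $\mathcal{R}_A$, with only the constant ambiguity accounted for separately by the leading factor of $4$. Once the form-to-order correspondence is in place, the bound reduces to standard facts about type numbers of quaternion orders over $\Q$ and about $2$-primary parts of narrow ray class groups over imaginary quadratic fields, so no deep analytic input is required beyond the classical genus-theoretic bookkeeping.
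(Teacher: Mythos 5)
The paper's proof of this lemma is a one-line citation: the bound is read directly off the explicit classifications in Theorems 5.1, 5.3, 5.5 and 5.7 of \cite{JM}, which enumerate the $\Gamma_d$-equivalence classes of binary Hermitian forms (equivalently, of the associated circles) of discriminant $D$ case by case, and in every case the count is visibly at most $4\prod_{p\mid\gcd(d,D)}4$. Your proposal instead tries to rederive the bound via a form-to-order dictionary and a type-number/genus-theory estimate. That is a genuinely different route, but as written it has a real gap at precisely the point you yourself flag as ``the main obstacle.''

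The gap is that the claimed injection from $\PSL(2,\mathcal{O}_d)$-classes of primitive forms of discriminant $D$ to conjugacy classes of the orders $\mathcal{R}_A$ is never established, and it is doubtful as stated. All of the groups $\Stab^+(\mathcal{C},\Gamma_d)$ for circles of a fixed discriminant are arithmetic Fuchsian groups with the same invariant quaternion algebra $\bigl(\tfrac{-d,D}{\mathbb{Q}}\bigr)$, and conjugacy of the orders they generate inside that algebra is in general a \emph{coarser} invariant than $\Gamma_d$-equivalence of the forms: inequivalent forms can yield conjugate orders. If the map fails to be injective, counting order classes gives no upper bound on form classes, and the whole reduction collapses. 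The actual parametrization in \cite{JM} (building on \cite{MR1}) is not by a type number of a quaternion algebra but by genus-theoretic data attached to $\mathbb{Q}(\sqrt{-d})$ together with local conditions at the primes dividing $\gcd(d,D)$ and at $2$; recovering this from an order-theoretic picture would require a full Eichler-type correspondence for the quaternary forms of \S 4.2, not merely a type-number bound. The second step has the same problem in miniature: the bound on the relevant $2$-torsion by $4^{\omega(\gcd(d,D))}$ and the ``constant factor of at most $4$'' absorbing units, orientation, and the $\SL$/$\PSL$ passage are asserted without argument, and it is not checked that they combine to the stated constant rather than something larger. As it stands the proposal is a plausible program rather than a proof; the honest fix is either to carry out the correspondence in detail or to do what the paper does and quote the explicit classification.
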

\begin{proof}
This follows from Theorem 5.1, 5.3, 5.5, and 5.7 of \cite{JM}.
\end{proof}
\section{The Bianchi groups I: Existence and lower bounds}
\label{bianchi}
It will be convenient to recall some additional notation. We let $B_d$ denote the maximal discrete group containing $\Gamma_d$, which following \cite{Vul} we refer to as the {\em extended Bianchi group}.
The group $B_d$ contains $\PGL(2,\mathcal{O}_d)$ as a normal subgroup, and we describe $B_d$ by adjoining elements to $\PGL(2,\mathcal{O}_d)$. (Note that $\PGL(2,\mathcal{O}_d)$ contains $\Gamma_d$ with index $2$.) Let
\[
\delta = \left\{\begin{array}{cr} -d &\text{if } d\equiv 3 \pmod{4}\\-4d &\text{if } d\not\equiv 3 \pmod{4} \end{array}\right.
\]
denote the discriminant of $\mathbb{Q}(\sqrt{-d})$. For a positive square-free integer $r|\delta$, let $\sigma_r$ is a matrix with entries in $\mathcal{O}_d$ with $\det \sigma_r = r$.  Define
\[
\tilde{\sigma}_r = \sigma_r \begin{pmatrix}\frac{1}{\sqrt{r}} & 0 \\ 0 & \frac{1}{\sqrt{r}}\end{pmatrix} \in \PSL(2,\mathbb{C}).
\]
Then we have the decomposition:
\[
B_d = \coprod_{\substack{r|\delta,~0<r< \sqrt{|\delta|}\\r:\text{ square-free}}}\tilde{\sigma}_r \PGL(2,\mathcal{O}_d),
\]
and in particular, we have $\lbrack B_d: \PGL(2,\mathcal{O}_d) \rbrack = 2^{\omega(\delta)-1}$.
\begin{remark}
It follows that when $d=1,2$ or is a prime congruent to $3$ modulo $4$ then $B_d=\PGL(2,\mathcal{O}_d)$.
\end{remark}

For $B\in\mathcal{O}_d$ and $c\in \mathbb{Z}$, let $\mathcal{S}_{B,c}$ be the immersed totally geodesic surface in $\Omega_d$
whose associated circle in $\partial\mathbb{H}^3$ is given by
\[
\mathcal{C}_{B,c}~:~d|z|^2 + B\sqrt{-d}z +\overline{B\sqrt{-d}}\overline{z}+dc=0.
\]
In this section, we analyze totally geodesic surfaces associated to the circles obtained as
\begin{equation}\label{rep}
\tilde{\sigma}_r^{-1} \mathcal{C}_{B,c}
\end{equation}
for distinct $B \in \mathcal{O}_d$ and $c\in \mathbb{Z}$. Note that the circles $\sigma_r^{-1} \mathcal{C}_{B,c}$ are indeed associated to totally geodesic surfaces in $\Omega_d$ since $\mathcal{C}_{B,c}$ 
has equation
of the form in \S \ref{parameter}, and $\sigma_r^{-1}\in B_d$ which contains $\Gamma_d$ as a subgroup of finite index.

Note that 
from \cite[Theorem 8]{Vul}  if the ideal class group of the imaginary quadratic field $\mathbb{Q}(\sqrt{-d})$ does not contain any element of order $4$, then every immersed totally geodesic surface in $\Omega_d$ is associated to a circle of the form \eqref{rep} for some $|B|<d$ and $c\in \mathbb{Z}$. For other $d$, such a parametrization is not known. Nevertheless, \eqref{rep} in general forms a rich subfamily of totally geodesic surfaces, and we investigate embedded closed totally geodesic surfaces of the form \eqref{rep} in subsequent sections.

\subsection{Lifting immersed totally geodesic surfaces}
\begin{proposition}\label{prop1}

Fix $m, c\in \mathbb{Z}$. 
Then among the circles
\[
\tilde{\sigma}_r^{-1} \mathcal{C}_{m,c} \subset \partial \mathbb{H}^3
\]
with $r|\delta$ and $0<r<\sqrt{|\delta|}$, one can choose at least $2^{\omega(\delta)-1}/\tau(8\gcd(d,m))$ circles such that none of them is $\PGL(2,\mathcal{O}_d)$-equivalent to the other.
\end{proposition}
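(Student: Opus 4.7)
The plan is to recognize that the $2^{\omega(\delta)-1}$ circles in question form a single orbit under a natural finite group action, reduce the claim to bounding the size of the stabilizer of that orbit, and then bound the stabilizer via an arithmetic invariant of the Hermitian form. Concretely, let $A$ be the Hermitian matrix of $\mathcal{C}_{m,c}$. Since $\PGL(2,\mathcal{O}_d)$ is normal in $B_d$ with quotient $G := B_d/\PGL(2,\mathcal{O}_d)$ an elementary abelian $2$-group of order $2^{\omega(\delta)-1}$, the group $G$ acts on $\PGL(2,\mathcal{O}_d)$-equivalence classes of binary Hermitian forms by $[g]\cdot [A'] := [g^* A' g]$. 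The $\tilde{\sigma}_r$'s with $r\mid\delta$ square-free and $0 < r < \sqrt{|\delta|}$ exhaust the cosets of $G$, so the $2^{\omega(\delta)-1}$ circles $\tilde{\sigma}_r^{-1}\mathcal{C}_{m,c}$ constitute precisely the $G$-orbit of $[A]$. Hence the number of pairwise $\PGL(2,\mathcal{O}_d)$-inequivalent circles in this family is $|G|/|\Stab_G([A])|$, and the proposition reduces to proving
\[
|\Stab_G([A])| \leq \tau(8\gcd(d,m)).
\]

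Unwinding definitions, $[g]\in\Stab_G([A])$ precisely when some element of $g\PGL(2,\mathcal{O}_d)$ stabilizes $\mathcal{C}_{m,c}$ setwise, so $\Stab_G([A])$ coincides with the image in $G$ of $H := \Stab_{B_d}(\mathcal{C}_{m,c})$. To bound this image, my strategy is to isolate an arithmetic invariant of the $\PGL(2,\mathcal{O}_d)$-equivalence class of $A$ that is sensitive to the $G$-action. A natural choice is the content ideal $\mathfrak{c}(A') := (a', B', c')\subset\mathcal{O}_d$, whose ideal class is preserved by $\PGL(2,\mathcal{O}_d)$ but is twisted by factors supported on the primes above $r\mid\delta$ under the action of $\tilde{\sigma}_r$. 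A direct computation yields $\mathfrak{c}(A) = (d, m\sqrt{-d})$, whose ideal class encodes $\gcd(d,m)$ together with the ramification behavior of the primes dividing $\gcd(d,m)$; requiring $\tilde{\sigma}_r$ to preserve this class confines $r$ to a subgroup of $G$ of bounded order.

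The hard part is completing the last step with the sharp constant $\tau(8\gcd(d,m))$. I expect the analysis to parallel the $\mathbb{Z}_p$-local computations in the proof of Theorem \ref{upper}: at each odd prime $p\mid\gcd(d,m)$ the condition of preserving the content ideal class should leave at most $\tau(p) = 2$ local choices for the Atkin-Lehner twist, giving the factor $\tau(\gcd(d,m))$ when $\gcd(d,m)$ is odd; and at $p=2$ a separate $\mathbb{Z}_2$-analysis, by the same mechanism responsible for the factor $8$ appearing in the proof of Theorem \ref{upper}, contributes the extra factor $\tau(8)=4$. The principal technical subtleties here are the explicit tracking of how $\mathfrak{c}(A)$ transforms under the Atkin-Lehner-type matrices $\tilde{\sigma}_r$ (whose precise form depends on the ideal class structure of $\mathbb{Q}(\sqrt{-d})$), and handling the prime $2$ with care, just as in Theorem \ref{upper} and Lemma \ref{upper2}. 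Combining the local counts gives the bound $\tau(8\gcd(d,m))$, and the proposition follows.
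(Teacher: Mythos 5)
Your first paragraph is exactly the reduction the paper makes: since $\PGL(2,\mathcal{O}_d)$ is normal in $B_d$ with quotient $G\cong(\mathbb{Z}/2)^{\omega(\delta)-1}$ represented by the $\tilde{\sigma}_r$, two circles $\tilde{\sigma}_{r_1}^{-1}\mathcal{C}_{m,c}$ and $\tilde{\sigma}_{r_2}^{-1}\mathcal{C}_{m,c}$ are $\PGL(2,\mathcal{O}_d)$-equivalent iff $\tilde{\sigma}_{r_1}\tilde{\sigma}_{r_2}^{-1}\PGL(2,\mathcal{O}_d)$ meets $\Stab(\mathcal{C}_{m,c},B_d)$, so everything comes down to showing the image of this stabilizer in $G$ has order at most $\tau(8\gcd(d,m))$. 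Where you diverge is in how that bound is obtained, and this is where the proposal has a genuine gap: the decisive step is never carried out. The paper fixes an explicit $\sigma_r$ (with a separate construction when $d\equiv 1\pmod 4$ and $r$ is even), writes out via Lemma \ref{fuchsian} the three equations an element of $\tilde{\sigma}_r\PGL(2,\mathcal{O}_d)$ must satisfy to fix $\mathcal{C}_{m,c}$, and reduces the norm equation $\pm 1=rx_1^2+qx_2^2+2m(z_1x_2-z_2x_1)+crz_1^2+cqz_2^2$ modulo an odd prime $p$ dividing $r$ (resp.\ $q=\delta/r$) to force $p\mid m$. Your substitute --- that the content ideal class is twisted by $\tilde{\sigma}_r$ and that requiring it to be preserved ``confines $r$ to a subgroup of bounded order'' --- is only asserted. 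Two essential ingredients are missing. First, the transformation law $\mathfrak{c}(\tilde{\sigma}_r^*A\tilde{\sigma}_r)$ versus $\mathfrak{c}(A)$ is a nontrivial local computation at each ramified prime (note $\tilde{\sigma}_r^*A\tilde{\sigma}_r=\tfrac1r\sigma_r^*A\sigma_r$, and the local valuation of $\mathfrak{c}(A)=(d,m\sqrt{-d})$ at $\mathfrak{p}_p$ is $1$ or $2$ according as $p\nmid m$ or $p\mid m$, so the Atkin--Lehner twist behaves differently in the two cases); this depends on the explicit $\sigma_r$ and is exactly the content of the paper's case analysis. Second, even granting that $\tilde{\sigma}_r$ multiplies the content class by $\prod_{p\mid r,\,p\nmid m}[\mathfrak{p}_p]$, concluding that preservation of the class forces (the odd part of) $r$ or $\delta/r$ to divide $\gcd(d,m)$ requires knowing that no proper nonempty subproduct of the ramified primes is principal --- i.e.\ genus theory, asserting that these classes generate $(\mathbb{Z}/2)^{\omega(\delta)-1}$ with the single relation coming from $(\sqrt{-d})$. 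Without invoking this, the kernel of your content-class homomorphism could a priori be all of $G$ and the bound fails.

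Two smaller inaccuracies: the invariant preserved by $\GL(2,\mathcal{O}_d)$ is the ideal $(a,B,\bar B,c)$ itself (not merely its class --- the class only becomes the relevant invariant after twisting by the non-integral $\tilde{\sigma}_r$), and your final counting heuristic does not match how $\tau(8\gcd(d,m))$ actually arises: it is not a product of independent ``local choices of Atkin--Lehner twist'' at primes of $\gcd(d,m)$, but the number of divisors $r$ of $\delta$ for which $r$ or $\delta/r$ divides $8\gcd(d,m)$, i.e.\ the $r$ left \emph{unconstrained} by the necessary condition. The analogy with the $p$-adic representation arguments of Theorem \ref{upper} is also misplaced; the factor $8$ here comes from the prime $2$ being excluded from the congruence argument (the paper only reduces modulo odd primes), not from representation-theoretic considerations at $2$. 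The content-ideal route is plausible and could likely be completed with the genus-theoretic input made explicit, but as written the core of the proof is missing.
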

Before proving the proposition, we recall that the action of $\PGL(2,\mathbb{C})\cong \PSL(2,\mathbb{C})$ on $\partial\mathbb{H}^3$ via fractional transformation is well-defined, and can be extended
to an action on $\mathbb{H}^3 = \{z+rj~:~ z\in \mathbb{C}, r>0\}$. 
\begin{proof}
Recall that $\PGL(2,\mathcal{O}_d)$ is a normal subgroup of $B_d$, with
\[
\{\tilde{\sigma}_r~:~ r|\delta,~ 0< r< \sqrt{|\delta|}\}
\]
being a complete coset representatives. Hence $\PGL(2,\mathcal{O}_d)\tilde{\sigma}_{r_1}^{-1} \mathcal{C}_{m,c}$ and $\PGL(2,\mathcal{O}_d)\tilde{\sigma}_{r_2}^{-1} \mathcal{C}_{m,c}$ are identical if and only if $\Stab(\mathcal{C}_{m,c},B_d)$ intersects
\[
\tilde{\sigma}_{r_1}\PGL(2,\mathcal{O}_d)\tilde{\sigma}_{r_2}^{-1}=\tilde{\sigma}_{r_1}\tilde{\sigma}_{r_2}^{-1}\PGL(2,\mathcal{O}_d)
\]
non-trivially. In order to prove the claim, therefore, it is sufficient to prove that there are at most $\tau(8\gcd(d,m))$ distinct $r$ with $r|\delta$ $~ 0< r< \sqrt{|\delta|}$ such that
\[
\tilde{\sigma}_{r}\PGL(2,\mathcal{O}_d)
\]
has non-trivial intersection with $\Stab(\mathcal{C}_{m,c},B_d)$.

We first specify the choice of $\sigma_r$. When $d \not\equiv 1 \pmod {4}$, we let
\[
\sigma_r = \begin{pmatrix}\sqrt{-d} & r \\ vr & u\sqrt{-d}\end{pmatrix},
\]
with $-d=rs$, $us-vr=1$. When $d\equiv 1 \pmod{4}$ and $r$ is odd, we define $\sigma_r$ the same as above, and when $r=2r_0$ is even, we let
\[
\sigma_r =\begin{pmatrix}1+\sqrt{-d} & 2 \\ 2v_0 & u_0(1+\sqrt{-d})\end{pmatrix} \sigma_{r_0}
\]
where $-1-d=2s_1$ and $u_0s_1-2v_0=1$.

We then have
\begin{multline*}
\tilde{\sigma}_r \PGL(2,\mathcal{O}_d)= \{\begin{pmatrix}x&y\\z&w\end{pmatrix}=\begin{pmatrix}x_1\sqrt{r}+x_2\sqrt{-s} & y_1\sqrt{r}+y_2\sqrt{-s}\\ z_1\sqrt{r}+z_2\sqrt{-s}& w_1\sqrt{r}+w_2\sqrt{-s}\end{pmatrix}\in \PGL(2,\mathbb{C})\\
:~x_1+x_2\sqrt{-d},y_1+y_2\sqrt{-d},z_1+z_2\sqrt{-d},w_1+w_2\sqrt{-d}\in \mathcal{O}_d\},
\end{multline*}
if $d \not\equiv 1 \pmod {4}$, or if $d\equiv 1 \pmod{4}$ and $r$ is odd. When $d\equiv 1 \pmod{4}$ and $r$ is even, we write
\[
\tilde{\sigma}_r \PGL(2,\mathcal{O}_d)
= \{\begin{pmatrix}\frac{\sqrt{2}+\sqrt{-2d}}{2} & \sqrt{2} \\ \sqrt{2}v_0 & u_0\frac{\sqrt{2}+\sqrt{-2d}}{2}\end{pmatrix}\begin{pmatrix}x&y\\z&w\end{pmatrix}~:~ \begin{pmatrix}x&y\\z&w\end{pmatrix} \in \tilde{\sigma}_{r_0}\PGL(2,\mathcal{O}_d)\}.
\]

Note that Proposition \ref{prop1} 
is trivial when $d=1,3$. So we assume that $d\neq 1,3$, which makes
\[
\PGL(2,\mathcal{O}_d) = \{\gamma \in M_{2\times 2}(\mathcal{O}_d) ~:~ \det \gamma = \pm 1\}.
\]
Observe in both cases that if we write $\delta = r q$, then
\[
\tilde{\sigma}_r \PGL(2,\mathcal{O}_d) \subset  \{\begin{pmatrix}x&y\\z&w\end{pmatrix}=\begin{pmatrix}x_1\sqrt{r}+x_2\sqrt{-q} & y_1\sqrt{r}+y_2\sqrt{-q}\\ z_1\sqrt{r}+z_2\sqrt{-q}& w_1\sqrt{r}+w_2\sqrt{-q}\end{pmatrix} ~:~ x_i,y_i,z_i,w_i \in \frac{1}{2}\mathbb{Z},~ xw-yz=\pm 1 \}.
\]
and we may therefore regard $\tilde{\sigma}_r \PGL(2,\mathcal{O}_d)$ as a subset of
\[
\{\begin{pmatrix}x&y\\z&w\end{pmatrix}=\begin{pmatrix}x_1\sqrt{\pm r}+x_2\sqrt{\mp q} & y_1\sqrt{\pm r}+y_2\sqrt{\mp q}\\ z_1\sqrt{\pm r}+z_2\sqrt{\mp q}& w_1\sqrt{\pm r}+w_2\sqrt{\mp q}\end{pmatrix} ~:~ x_i,y_i,z_i,w_i \in \frac{1}{2}\mathbb{Z},~xw-yz=1 \}  \subset \PSL(2,\mathbb{C}).
\]
Recall from Lemma \ref{fuchsian} with $a=d$, $B= m\sqrt{-d}$, and $c=cd$
that $\beta = \begin{pmatrix}x&y\\z&w\end{pmatrix} \in \PSL(2,\mathbb{C}) $
fixes $\mathcal{C}_{B,c}$  if and only if
\begin{equation}\label{equi}
\left\{\begin{array}{rl}
\pm1&=|x|^2+2\mathrm{Re}\left(\frac{\sqrt{-d}}{d}mz\bar{x}\right)+c|z|^2\\
w&=\frac{m\sqrt{-d}}{d} (z\mp\bar{z})\pm\bar{x}\\
y&=\frac{m\sqrt{-d}}{d} (x\mp\bar{x})\pm c\bar{z}.
\end{array}\right.
\end{equation}
We claim that if
\[
\begin{pmatrix}x&y\\z&w\end{pmatrix}=\begin{pmatrix}x_1\sqrt{\pm r}+x_2\sqrt{\mp q} & y_1\sqrt{\pm r}+y_2\sqrt{\mp q}\\ z_1\sqrt{\pm r}+z_2\sqrt{\mp q}& w_1\sqrt{\pm r}+w_2\sqrt{\mp q}\end{pmatrix},
\]
with  $x_i,y_i,z_i,w_i \in \frac{1}{2}\mathbb{Z}$ and $xw-yz=1$ satisfies \eqref{equi}, then it is necessary that either the odd part of $r$ or the odd part of $q$ must divide $m$.

The first equation in \eqref{equi} is equivalent to
\begin{equation}\label{first}
\pm 1= rx_1^2 + qx_2^2 +2m (z_1x_2 - z_2 x_1) + cr z_1^2 + cq z_2^2.
\end{equation}

We first deal with the following case:
\[
\begin{pmatrix}x&y\\z&w\end{pmatrix}=\begin{pmatrix}x_1\sqrt{r}+x_2\sqrt{-q} & y_1\sqrt{r}+y_2\sqrt{-q}\\ z_1\sqrt{r}+z_2\sqrt{-q}& w_1\sqrt{r}+w_2\sqrt{-q}\end{pmatrix}.
\]
We consider $+1$ and $-1$ cases of \eqref{equi} separately:\\[\baselineskip]
\textbf{Case $+1$:} In this case, if $r$ has an odd prime factor $p$ that does not divide $m$, then the second and the third condition of \eqref{equi} imply that $p|z_2$ and $p|x_2$, and so there are no solutions to the \eqref{first}, because it would imply
\[
+1 \equiv 0 \pmod{p}.
\]
Hence the system of equations can have a solution only if the odd part of $r$ divides $m$.\\[\baselineskip]
\textbf{Case $-1$:} Likewise, if $q$ has an odd prime factor $p$ that does not divide $m$, then the same argument shows that there are no solutions to \eqref{first}. So the system of equations may have a solution only
if the odd part of $q$ divides $m$.\\[\baselineskip]
Applying the same argument to the case
\[
\begin{pmatrix}x&y\\z&w\end{pmatrix}=\begin{pmatrix}x_1\sqrt{-r}+x_2\sqrt{q} & y_1\sqrt{-r}+y_2\sqrt{q}\\ z_1\sqrt{-r}+z_2\sqrt{q}& w_1\sqrt{-r}+w_2\sqrt{q}\end{pmatrix},
\]
we conclude that $\tilde{\sigma}_r \PGL(2,\mathcal{O}_d)$ intersects $\Stab(\mathcal{C}_{m,c},B_d)$ 
nontrivially only if the odd part of $r$ or the odd part of $\delta/r$ divides $m$. Hence only if $r$ or $\delta/r$ divides $8\gcd(d,m)$. Among $0<r<\sqrt{|\delta|}$, there are at most $\tau (8\gcd (d,m))$ of them, concluding the proof.
\end{proof}
\begin{corollary}\label{duplicate}
Among the totally geodesic surfaces $\tilde{\sigma}_r \mathcal{S}_{m,c}$ with $r|d$ and $0<r$ in $\Omega_d$, there are at least $\frac{1}{10}2^{\omega(d)- \omega(\gcd(d,m))}$ distinct totally geodesic surfaces.
\end{corollary}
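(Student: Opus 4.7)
The corollary follows rather directly from Proposition \ref{prop1} after two easy manipulations. First, I observe that Proposition \ref{prop1} produces at least
\[
N\ :=\ \frac{2^{\omega(\delta)-1}}{\tau(8\gcd(d,m))}
\]
circles $\tilde{\sigma}_r^{-1}\mathcal{C}_{m,c}$ that are pairwise $\PGL(2,\mathcal{O}_d)$-inequivalent; these are \emph{a fortiori} pairwise $\Gamma_d$-inequivalent, since $[\PGL(2,\mathcal{O}_d):\Gamma_d]=2$ implies that two circles lying in distinct $\PGL(2,\mathcal{O}_d)$-orbits also lie in distinct $\Gamma_d$-orbits. A direct computation from the explicit formula for $\sigma_r$ recorded in the proof of Proposition \ref{prop1} moreover shows $\tilde{\sigma}_r^{\,2}\in\PSL(2,\mathcal{O}_d)=\Gamma_d$, so $\tilde{\sigma}_r$ and $\tilde{\sigma}_r^{-1}$ represent the same coset in $B_d/\Gamma_d$; hence $\tilde{\sigma}_r^{-1}\mathcal{C}_{m,c}$ and $\tilde{\sigma}_r\mathcal{C}_{m,c}$ define the same surface $\tilde{\sigma}_r\mathcal{S}_{m,c}$ in $\Omega_d$, and the $N$ inequivalent circles yield $N$ distinct immersed totally geodesic surfaces in the family indexed in the corollary. (The minor discrepancy between $r\mid\delta$ in Proposition \ref{prop1} and $r\mid d$ in the corollary is immaterial: for $d\equiv 3\pmod 4$ the two index sets coincide, while in the remaining cases one can restrict the proposition's family to the sub-collection with $r\mid d$, losing at most a factor of $2$ which is absorbed into the constant $1/10$ below.)

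It then remains to verify the numerical bound. Set $g=\gcd(d,m)$ and $k=\omega(g)$. Since $d$ is square-free, so is $g$, and a short calculation gives
\[
\tau(8g)\ =\ \begin{cases} 2^{k+2}, & g\text{ odd},\\ 5\cdot 2^{k-1}, & g\text{ even}.\end{cases}
\]
Combined with $\omega(\delta)=\omega(d)+1$ if $d\equiv 1\pmod 4$ and $\omega(\delta)=\omega(d)$ otherwise, a short case analysis on $d\bmod 4$ (and, when $d$ is even, on the parity of $m$) shows
\[
N\ \in\ \bigl\{\tfrac14,\ \tfrac15,\ \tfrac18\bigr\}\cdot 2^{\omega(d)-\omega(\gcd(d,m))},
\]
each of which exceeds $\tfrac{1}{10}\,2^{\omega(d)-\omega(\gcd(d,m))}$, as required.

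There is essentially no real obstacle here: the substantive work is contained in Proposition \ref{prop1}, and the corollary is simply an arithmetic repackaging of its conclusion in terms of $\omega(d)$ rather than $\omega(\delta)$, with the clean constant $1/10$ chosen large enough to absorb the small combinatorial factors that appear in each case of the analysis.
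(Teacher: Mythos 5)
Your proof is correct and follows the paper's own route: invoke Proposition \ref{prop1}, note that circles in distinct $\PGL(2,\mathcal{O}_d)$-orbits are automatically in distinct $\Gamma_d$-orbits, and then bound $2^{\omega(\delta)-1}/\tau(8\gcd(d,m))$ from below, where the paper uses the uniform estimates $\tau(8\gcd(d,m))\le 5\cdot 2^{\omega(\gcd(d,m))}$ (since $2^k\,\|\,8\gcd(d,m)$ forces $k\le 4$) and $2^{\omega(\delta)-1}\ge 2^{\omega(d)-1}$ in place of your sharper case-by-case evaluation. The one point to watch is your parenthetical factor of $2$ for reconciling $r\mid\delta$ with $r\mid d$: if that factor were actually incurred in the case $d\equiv 2\pmod 4$ with $\gcd(d,m)$ odd, your bound would drop to $\tfrac{1}{16}\,2^{\omega(d)-\omega(\gcd(d,m))}$, below the claimed $\tfrac{1}{10}\,2^{\omega(d)-\omega(\gcd(d,m))}$; fortunately when $d$ is even every square-free divisor of $\delta=-4d$ already divides $d$, so no loss occurs there, and the paper itself simply identifies the two index sets without comment.
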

\begin{proof}
We note that the boundary circles corresponding to $\tilde{\sigma}_r \mathcal{S}_{m,c}$ are $\tilde{\sigma}_r \mathcal{C}_{m,c}$, and by Proposition \ref{prop1} one can choose at least $2^{\omega(\delta)-1}/\tau(8\gcd(d,m))$ so that none of them are $\PGL(2,\mathcal{O}_d)$ equivalent to the other. It is then automatic that none of them are $\Gamma_d$ equivalent to the other, and so corresponding geodesic surfaces in $\Omega_d$ must all be distinct. To prove the inequality, if we write $2^k\| 8\gcd(d,m)$, we first have
\[
\tau(8\gcd(d,m)) = (k+1)\prod_{p|\gcd(d,m),~p\neq 2} 2 \leq 5 \prod_{p|\gcd(d,m),~p\neq 2} 2 \leq 5 \cdot 2^{\omega(\gcd(d,m))}.
\]
We then use
\[
2^{\omega(\delta)-1} \geq 2^{\omega(d)-1}.
\]
\end{proof}
\subsection{Constructing closed embedded orientable totally geodesic surfaces}
\label{exists}
We begin with an explicit construction of totally geodesic surfaces.
\begin{proposition}
\label{thereis}
For any $r|d$, $m,c \in \mathbb{Z}$, such that $0<m<d/2$ and $0<m^2-dc<d/4$,
\[
\sigma_r^*A \sigma_r= \sigma_r^*\begin{pmatrix}d &m\sqrt{-d}\\ -m\sqrt{-d} & dc\end{pmatrix}\sigma_r
\]
corresponds to an orientable embedded totally geodesic surface in $\Omega_d$.
\end{proposition}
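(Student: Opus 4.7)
My plan is to verify embeddedness via the trace criterion (Corollary \ref{cor_embed}) and orientability via Lemma \ref{lem:orient}, with the key arithmetic input being an integrality statement about the associated binary Hermitian form. Setting $A = \begin{pmatrix} d & m\sqrt{-d} \\ -m\sqrt{-d} & dc\end{pmatrix}$, a direct computation gives discriminant $D = d(m^2 - dc)$, so the hypothesis $0 < m^2 - dc < d/4$ yields $0 < 4D < d^2$. The circle $\mathcal{C}_{m,c}$ is centered at $im/\sqrt{d}$ with radius $\sqrt{(m^2-dc)/d} < 1/2$. By cyclicity of trace, the embeddedness condition for $A^\sharp = \sigma_r^* A\sigma_r$ against $\Gamma_d$ is equivalent to $|\mathrm{Tr}(\delta^*A\delta A^{-1})|\geq 2$ for every $\delta = \tilde\sigma_r\gamma\tilde\sigma_r^{-1}$ with $\gamma\in\Gamma_d$, reducing the problem to a trace bound for the conjugate lattice acting on the standard circle $\mathcal{C}_{m,c}$.

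For parabolic $\delta$, Lemma \ref{lem:para} gives
\[
\mathrm{Tr}(\delta^*A\delta A^{-1}) = 2 - \frac{|t|^2 Q_A(-\bar x,\bar y)^2}{D},
\]
so the claim reduces to $|t|^2 Q_A(-\bar x,\bar y)^2 \geq 4D$ whenever $Q_A(-\bar x,\bar y)\neq 0$. The key observation is that $Q_A$ factors as $d\cdot q(x_1,x_2,y_1,y_2)$, where $q$ is an integer-valued quaternary quadratic form on the $\mathbb{Z}$-coordinates of $x, y\in \mathcal{O}_d$ (a short check using the bases $\{1,\sqrt{-d}\}$ when $d\not\equiv 3\pmod{4}$ and $\{1,(1+\sqrt{-d})/2\}$ when $d\equiv 3\pmod{4}$). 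Hence $|Q_A|\geq d$ whenever nonzero, and together with $|t|^2\geq 1$ (from $\mathcal{O}_d$-integrality of entries) and $4D < d^2$ this gives $|t|^2 Q_A^2\geq d^2 > 4D$, forcing $\mathrm{Tr}\leq -2$.

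For non-parabolic $\delta$, I would apply identity \eqref{identity} to write the trace explicitly as
\[
-\frac{d(c' + ca') - 2m\sqrt d\,\mathrm{Im}(B')}{D},
\]
where $a' = Q_A(\alpha,\xi)$, $c' = Q_A(\beta,\eta)$, and $B'$ is the mixed bilinear term for $\delta = \begin{pmatrix}\alpha&\beta\\\xi&\eta\end{pmatrix}$. Exploiting $a', c'\in d\mathbb{Z}$ together with the size constraint $4D < d^2$ and a bound on $|\mathrm{Im}(B')|$ coming from the rationality structure of $\delta$'s entries then forces $|\mathrm{Tr}|\geq 2$. Finally, for orientability, by Lemma \ref{lem:orient} I would suppose there exists $\delta$ in the twisted lattice solving \eqref{orient} with the minus sign: the third equation of \eqref{orient} forces $|z|^2\geq d/(m^2-dc) > 4$, the first equation forces $\mathrm{Tr}(\delta)\in \sqrt{-d}\cdot\mathbb{Z}$, and combining these with the hypothesis $0 < m < d/2$ eliminates all possible such $\delta$.

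The main obstacle I expect is the non-parabolic embeddedness case: the parabolic bound is essentially automatic from Lemma \ref{lem:para}, but in the general case one must carefully track how the $\sigma_r$-twist affects the integrality of the entries of $\delta$, and how this controls the off-diagonal contribution $\mathrm{Im}(B')$ appearing in the trace formula.
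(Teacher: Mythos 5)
Your parabolic-case argument is essentially the paper's key observation specialized to parabolics: $Q_A$ takes values in $d\mathbb{Z}$, so Lemma \ref{lem:para} together with $4D<d^2$ gives $\mathrm{Tr}\le -2$. But Corollary \ref{cor_embed} demands the trace bound for \emph{every} $\gamma\in\Gamma_d$, and your plan for the non-parabolic case is where the genuine gap lies. You propose to use the identity \eqref{identity} with $a'=Q_A(\cdot),\,c'=Q_A(\cdot)\in d\mathbb{Z}$ plus ``a bound on $|\mathrm{Im}(B')|$ coming from the rationality structure of $\delta$'s entries.'' No such bound exists (the entries of $\gamma$ are unbounded), and a divisibility statement for the mixed term alone is not enough: knowing only that the numerator $dc'+dca'-2m\sqrt{d}\,\mathrm{Im}(B')$ lies in $md\mathbb{Z}$ does not exclude it from being $0$ or otherwise small, which would put the trace strictly inside $(-2,2)$. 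What is actually needed — and what the paper proves — is the exact congruence $\mathrm{Tr}(\gamma^*A\gamma\tilde{A})\equiv -2D\pmod{d^2}$ for all $\gamma$, where $\tilde{A}=-DA^{-1}$. The diagonal terms vanish mod $d^2$ by the $d\mid Q_A$ observation, but pinning down the mixed term requires expanding $2\mathrm{Re}\bigl(m\sqrt{-d}\begin{pmatrix}\bar{x}&\bar{z}\end{pmatrix}A\begin{pmatrix}y\\w\end{pmatrix}\bigr)\equiv -2m^2d\,\mathrm{Re}(w\bar{x}-y\bar{z})\pmod{d^2}$ and then using the determinant relation $xw-yz=1$ to get $\mathrm{Re}(w\bar{x}-y\bar{z})\equiv 1\pmod{d}$. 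That use of $\det\gamma=1$ is the missing idea; once you have the congruence, $0<4D<d^2$ forces $|\mathrm{Tr}(\gamma^*A\gamma\tilde{A})|\ge 2D$ and hence $|\mathrm{Tr}(\gamma^*A\gamma A^{-1})|\ge 2$, with no case split between parabolic and non-parabolic elements.

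Two smaller points. First, for the $\sigma_r$-twist you should simply invoke that $\sigma_r$ normalizes $\Gamma_d$, so the ``conjugate lattice'' is again $\Gamma_d$ and the whole computation reduces verbatim to the $\sigma_1$ case; tracking rationality of twisted entries is unnecessary. Second, your orientability sketch (``the first equation forces $\mathrm{Tr}(\delta)\in\sqrt{-d}\,\mathbb{Z}$ \ldots combining these eliminates all possible such $\delta$'') is too vague to certify. The paper's argument is a clean divisibility contradiction: the second and third equations of \eqref{orient} (with the minus sign) force $d/\gcd(m,d)$ to divide $x_1$ and $z_1$, and since $0<m^2-dc<d/4$ guarantees $d\nmid m$, reducing the first equation modulo a prime $p\mid d/\gcd(m,d)$ yields $-1\equiv 0\pmod{p}$. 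You would need to supply an argument of comparable precision.
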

\begin{proof}
Let $D=-\det(A)=m^2d-d^2c$ and notice that $D= d(m^2-dc) < d(d/4)$ by assumption. Hence $d^2>4D$. As above let $Q_A$ be the binary Hermitian form corresponding to $A$. Note that since $\tilde{\sigma}_r\in B_d$
normalizes $\Gamma_d$ by construction we have $\sigma_r^{-1} \Gamma_d \sigma_r = \Gamma_d$.

It will be instructive to deal with the case of $\sigma_1$ first.  Since $\sigma_1\in \Gamma_d$, it suffices to work with $A$.  Set $\tilde{A} = \begin{pmatrix}dc &-m\sqrt{-d}\\ m\sqrt{-d} & d\end{pmatrix} = -DA^{-1}$. Then
with $\gamma=\begin{pmatrix} x&y  \\ z & w \end{pmatrix} \in \Gamma_d$ we have:

\begin{align*}
\mathrm{Tr}(\gamma^*A\gamma \tilde{A} )
=&\mathrm{Tr}\left(\begin{pmatrix}\bar{x} & \bar{z} \\ \bar{y} & \bar{w}\end{pmatrix} A \begin{pmatrix} x&y  \\ z & w \end{pmatrix} \begin{pmatrix}dc &- m\sqrt{-d}\\ m\sqrt{-d} & d\end{pmatrix} \right)\\
= & \mathrm{Tr}\left( \begin{pmatrix} Q_A(\bar{x},\bar{z}) & \begin{pmatrix}\bar{x} & \bar{z}\end{pmatrix}A\begin{pmatrix}y\\w\end{pmatrix}  \\   \begin{pmatrix}\bar{y} & \bar{w}\end{pmatrix}A\begin{pmatrix}x\\z\end{pmatrix}   & Q_A(\bar{y},\bar{w}) \end{pmatrix} \begin{pmatrix}dc &- m\sqrt{-d}\\ m\sqrt{-d} & d\end{pmatrix} \right)\\
= & dc Q_A(\bar{x},\bar{z}) + dQ_A(\bar{y},\bar{w}) + 2\mathrm{Re}\left( m\sqrt{-d}\begin{pmatrix}\bar{x} & \bar{z}\end{pmatrix}A\begin{pmatrix}y\\w\end{pmatrix} \right).
\end{align*}
By choice of $A$, we see that for $x,y \in \mathcal{O}_d$, $Q_A(x,y)$ always is a multiple of $d$, and so we have
\begin{multline*}
\mathrm{Tr}(\gamma^*A\gamma \tilde{A} ) \equiv 2\mathrm{Re}\left( m\sqrt{-d}\begin{pmatrix}\bar{x} & \bar{z}\end{pmatrix}\begin{pmatrix}d &m\sqrt{-d}\\ -m\sqrt{-d} & dc\end{pmatrix} \begin{pmatrix}y\\w\end{pmatrix} \right) \pmod{d^2}\\
=2\mathrm{Re}\left( m\sqrt{-d}(y(\bar{x}d -\bar{z}m\sqrt{-d}) +w(\bar{x}m\sqrt{-d}-\bar{z}dc) )\right) \\
\equiv -2m^2d\mathrm{Re}\left(  w\bar{x}-y\bar{z} \right) \pmod{d^2}.
\end{multline*}
Since $\det(\gamma)=1=xw-yz$ we have
\[
\mathrm{Re}\left(  w\bar{x}-y\bar{z} \right) = \mathrm{Re}\left(  w\bar{x}-y\bar{z} -wx+yz +1\right) = \mathrm{Re}\left(  w(\bar{x}-x)-y(\bar{z}-z)\right) + 1 \equiv 1 \pmod{d}.
\]
Putting these calculations together we deduce that:
\[\mathrm{Tr}(\gamma^*A\gamma \tilde{A} )\equiv -2m^2d \equiv -2(m^2d-d^2c) = -2D \pmod{d^2}\]
From the definition of $\tilde{A}$ and the fact noted above that $d^2>4D$, it follows that the associated surface is embedded by Corollary \ref{cor_embed}.

To prove orientability
of the surface associated to $A$, we apply Lemma \ref{lem:orient} with $a=d$, $B= m\sqrt{-d}$, and $c=cd$
to see that it is equivalent to the non-existence of $\gamma \in \Gamma_d$ such that
\begin{align*}
-1&=|x|^2+2\mathrm{Re}\left(\frac{\sqrt{-d}}{d}mz\bar{x}\right)+c|z|^2\\
w&=\frac{m\sqrt{-d}}{d} (z+\bar{z})-\bar{x}\\
y&=\frac{m\sqrt{-d}}{d} (x+\bar{x})-c\bar{z}.
\end{align*}
Assume
by way of contradiction that there exists such a $\gamma$. Let $x=x_1+\sqrt{-d}x_2$ and $z=z_1+\sqrt{-d}z_2$. Because $w,y \in \mathcal{O}_d$, we see from the second and the third condition that $z_1$ and $x_1$ must be divisible by $d/\gcd(m,d)$. Because $m$ satisfies $0<m^2-dc<d/4$, $m$ is not divisible by $d$, so there is a prime factor $p|d/\gcd(m,d)$. If we rearrange the first equation, we get
\[
-1=x_1^2+dx_2^2 + 2m(z_1x_2-z_2x_1) + cz_1^2 + dcz_2^2,
\]
and taking it modulo $p$ implies $-1 \equiv 0\pmod{p}$, which is a contradiction.

Now the orientability of the surfaces associated to $\sigma_r^* A \sigma_r$ (embedded or not) follows easily from above.  Briefly, since $\sigma_r$ normalizes $\Gamma_d$, so the required stabilizer of $\sigma_r^*A\sigma_r$
in $\Gamma_d$ is the conjugate $\sigma_r^{-1} F_A \sigma_r$, where $F_A$ is the Fuchsian subgroup corresponding to $A$. So the totally geodesic surfaces corresponding to $A$ and $\sigma_r^*A\sigma_r$ are isometric, and orientability follows.

We now discuss embedding the surface associated to $\sigma_r^*A\sigma_r$.  To that end, for $\gamma\in \Gamma_d$ set $\tilde{\gamma}=\sigma_r^{-1}\gamma \sigma_r$. Since $\sigma_r$ normalizes $\Gamma_d$ it suffices to consider

\[
\mathrm{Tr}\left((\tilde{\gamma})^*\sigma_r^*A\sigma_r \tilde{\gamma} \sigma_r^{-1}\tilde{A}(\sigma_r^{-1})^* \right).
\]
A calculation shows that this is simply $\mathrm{Tr}(\gamma^*A\gamma \tilde{A} )$ and embedding in this case now follows as above.\end{proof}

\subsection{Enumerating the surfaces}
We are ready to present a lower bound for the number of closed orientable embedded totally geodesic surfaces:

\begin{corollary}\label{lower}
For all sufficiently large $d$, $\Omega_d$ contains $\gg d$ closed orientable embedded totally geodesic surfaces.
\end{corollary}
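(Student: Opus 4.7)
The plan is to assemble Proposition \ref{thereis}, Corollary \ref{duplicate}, and Theorem \ref{lem2}. For each $r_0 \in \mathcal{D}$ I would first choose an integer $m$ with $0 < m < d/2$ and $m^2 \equiv r_0 \pmod{d}$, which is possible because $r_0$ is a square modulo $d$ and $0 < r_0 < d/4$ forces $r_0 \not\equiv 0 \pmod{d}$. Setting $c = (m^2 - r_0)/d \in \mathbb{Z}$, the pair $(m, c)$ satisfies the hypotheses $0 < m < d/2$ and $0 < m^2 - dc = r_0 < d/4$ of Proposition \ref{thereis}. Hence for every square-free $r \mid d$ the matrix $\sigma_r^* A \sigma_r$, with $A = \begin{pmatrix} d & m\sqrt{-d} \\ -m\sqrt{-d} & dc \end{pmatrix}$, corresponds to an orientable embedded totally geodesic surface in $\Omega_d$. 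Closedness follows from Lemma \ref{straight_line}(2): the discriminant of $A$ is $D = -\det A = d r_0$, and bilinearity of the Hilbert symbol together with $\bigl(\frac{-d,d}{\mathbb{Q}}\bigr) = 1$ (established in the proof of Lemma \ref{straight_line}(1)) imply that $d r_0$ is of the form $\alpha^2 + d\beta^2$ with $\alpha, \beta \in \mathbb{Q}$ if and only if $r_0$ is, which is excluded by the definition of $\mathcal{D}$.

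Next, keeping $(m, c)$ fixed and invoking Corollary \ref{duplicate}, I obtain at least $\tfrac{1}{10}\cdot 2^{\omega(d) - \omega(\gcd(d, m))}$ pairwise distinct (and, by the previous step, closed, orientable, embedded) totally geodesic surfaces as $r$ varies over the divisors of $d$. To bring this into the form demanded by Theorem \ref{lem2}, I would observe that, since $d$ is squarefree and $r_0 \equiv m^2 \pmod{d}$, a prime $p \mid d$ divides $m$ if and only if it divides $r_0$; hence $\omega(\gcd(d, m)) = \omega(\gcd(d, r_0))$.

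Finally, surfaces produced for distinct $r_0 \in \mathcal{D}$ are themselves distinct, because the discriminant $D = d r_0$ is a $B_d$-invariant: $\det \tilde{\sigma}_r = 1$, so conjugation by any element of $B_d$ preserves the determinant of the associated Hermitian matrix and therefore $D$. Summing the per-$r_0$ lower bound and applying Theorem \ref{lem2},
\[
\#\{\text{closed orientable embedded totally geodesic surfaces in } \Omega_d\} \;\geq\; \frac{1}{10}\sum_{r_0 \in \mathcal{D}} 2^{\omega(d) - \omega(\gcd(d, r_0))} \;\gg\; d,
\]
which proves the corollary. With Theorem \ref{lem2} (the arithmetic lower bound) and Proposition \ref{thereis} (the explicit construction) already in hand, no step here is a substantial obstacle; the argument is essentially a bookkeeping exercise matching the counting parameter of Corollary \ref{duplicate} to the summation index of Theorem \ref{lem2}.
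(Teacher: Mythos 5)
Your proposal is correct and follows essentially the same route as the paper's own proof: apply Proposition \ref{thereis} for the embedded orientable surfaces, Lemma \ref{straight_line}(2) together with the Hilbert-symbol identity $\bigl(\frac{-d,\,dr_0}{\mathbb{Q}}\bigr)\cong\bigl(\frac{-d,\,r_0}{\mathbb{Q}}\bigr)$ for closedness, Corollary \ref{duplicate} to count the surfaces $\tilde{\sigma}_r\mathcal{S}_{m,c}$ for fixed $(m,c)$, and Theorem \ref{lem2} to sum over $r_0\in\mathcal{D}$. Your added bookkeeping (the explicit choice of $m$ with $m^2\equiv r_0\pmod d$, the identity $\omega(\gcd(d,m))=\omega(\gcd(d,r_0))$, and distinctness across different $r_0$ via invariance of the discriminant) only makes explicit steps the paper leaves implicit.
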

\begin{proof} Let $\mathcal{C}$ be a circle of discriminant $D= d(m^2-dc)$ as in the statement of Proposition \ref{thereis}.  The Hilbert Symbol of the invariant quaternion algebra of $\Stab^+(\mathcal{C},\Gamma_d)$ is given by (\cite[Chapter 9.6]{MR2})
\[
\biggl ({\frac{-d,D}{\mathbb{Q}}}\biggr ) = \biggl ({\frac{-d,d(m^2-dc)}{\mathbb{Q}}}\biggr ) \cong  \biggl ({\frac{-d,(m^2-dc)}{\mathbb{Q}}}\biggr )
\]
If $0<m^2-dc < \frac{d}{4}$ is not of the form $n^2+dy^2$ then the corresponding surface is closed by Lemma \ref{straight_line}(2).

By Corollary \ref{duplicate}, for each fixed $m,c$ there are at least $\frac{1}{10} 2^{\omega(d)-\omega(\gcd(m,d))}$ distinct totally geodesic surfaces of the form $\tilde{\sigma}_r\mathcal{S}_{m,c}$. So the total number of closed orientable embedded totally geodesic surfaces is bounded from below by
\[
\frac{1}{10}\sum_{r\in \mathcal{D}} 2^{\omega(d)-\omega((m,d))}
\]
which is $\gg d$ by Theorem \ref{lem2}.
\end{proof}
\section{The Bianchi groups II: Upper bounds and finiteness}
We first state a general upper bound for embedded totally geodesic surfaces in $\Omega_d$ for any $d$.
\begin{theorem}\label{thm:upper}
There are at most
\[
\ll_\epsilon  d^{1+\epsilon}
\]
distinct embedded totally geodesic surfaces in $\Omega_d$.
\end{theorem}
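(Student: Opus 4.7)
The plan is to combine Corollary \ref{para:cor} (which, for an embedded surface, lower bounds the discriminant by the square of the minimum nonzero value of the associated Hermitian form), Theorem \ref{upper} (which upper bounds that minimum by $O(\gcd(d,D) \log^2(D/\gcd(d,D)))$), and Lemma \ref{upper2} (which bounds the number of $\Gamma_d$-inequivalent forms of a given discriminant by $4\tau(\gcd(d,D))^2$).

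First, given an embedded closed totally geodesic surface $\mathcal{S}\subset\Omega_d$, I would choose a primitive Hermitian matrix $A$ representing its lifted circle (the circle determines $A$ only up to a rational scalar, so I scale out the content). Writing $D=-\det A>0$ and $g=\gcd(d,D)$, Corollary \ref{para:cor} yields $4D\leq Q_A(x,y)^2$ for every $(x,y)\in \mathcal{O}_d^2$ with $Q_A(x,y)\neq 0$, while Theorem \ref{upper} produces some such $(x,y)$ with $0<Q_A(x,y)\ll g\log^2(D/g)$. Combining these gives
\[
D \ll g^2 \log^4(D/g).
\]
Since $g\leq d$, this self-referential inequality forces $\log D\ll \log d$, so in fact $D\ll g^2(\log d)^4$.

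Next, I would count by summing over divisors $g\mid d$. For a fixed $g$, writing $D=gm$ with $\gcd(m,d/g)=1$, the bound above gives $m\ll g(\log d)^4$, so there are $O(g(\log d)^4)$ admissible values of $D$ with $\gcd(d,D)=g$. Lemma \ref{upper2} contributes a factor $4\tau(g)^2$ per $D$, so
\[
\#\{\text{embedded t.g.\ surfaces in }\Omega_d\} \;\ll\; (\log d)^4 \sum_{g\mid d} g\,\tau(g)^2.
\]
Because $d$ is squarefree, $g\mapsto g\tau(g)^2$ is multiplicative and
\[
\sum_{g\mid d} g\,\tau(g)^2 \;=\; \prod_{p\mid d}(1+4p) \;\leq\; 5^{\omega(d)}\,d.
\]
The standard estimates $5^{\omega(d)}\ll_{\epsilon} d^{\epsilon/2}$ and $(\log d)^4 \ll_{\epsilon} d^{\epsilon/2}$ then yield the claimed bound $\ll_{\epsilon} d^{1+\epsilon}$.

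The main point that requires care is the first step. One must reduce to a primitive $A$ before invoking Theorem \ref{upper}, and one must verify that the self-referential estimate $D \ll g^2\log^4(D/g)$ genuinely gives a polynomial bound: this follows from the crude inequality $g\leq d$, which implies $D\leq C d^2 \log^4 D$ and hence $\log D = O(\log d)$. Once this is secured, the remaining steps are routine multiplicative number theory using that $d$ is squarefree, so all logarithmic and divisor-type factors can be absorbed into the $d^{\epsilon}$ slack.
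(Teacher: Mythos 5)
Your proposal is correct and follows essentially the same route as the paper: both combine the parabolic trace bound (Lemma \ref{lem:para}/Corollary \ref{para:cor}) with Theorem \ref{upper} to force $D\ll \gcd(d,D)^2\log^4(D/\gcd(d,D))$, then count discriminants divisor-by-divisor of $d$ using Lemma \ref{upper2} and evaluate $\sum_{e\mid d}e\prod_{p\mid e}4=\prod_{p\mid d}(1+4p)$. Your explicit remarks about reducing to a primitive form and resolving the self-referential inequality via $\log D\ll\log d$ are exactly the (implicit) care points in the paper's argument.
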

\begin{proof}
By Lemma \ref{lem:para} and Lemma \ref{upper}, we see that $D$ is a discriminant of an embedded totally geodesic surface only if
\[
\frac{\gcd(d,D)^2 \log^4 \frac{D}{\gcd(d,D)}}{D} \gg 1.
\]
Using Lemma \ref{upper2}, we see that for each $D$, there are at most
\[
\ll \prod_{p|\gcd(d,D)} 4
\]
distinct totally geodesic surface having discriminant $D$.

Now fix $e|d$ and let $D=eD_0$ with $\gcd(D_0,d/e)=1$. Then the first condition is equivalent to saying $\frac{D_0}{\log^4 D_0}\ll e$, which implies that there are at most $\ll e \log^4 d$ distinct $D$ with $\gcd(d,D)=e$ which can be a discriminant of an embedded totally geodesic surface. Summing over, we see that the total number of distinct embedded totally geodesic surface is bounded from above by
\[
\sum_{e|d} e \log^4 d  \prod_{p|e} 4  = \log^4 d  \prod_{p|d} (4p+1) = d\log^4 d \prod_{p|d} (4+1/p)  \ll_\epsilon d^{1+\epsilon},
\]
as claimed.
\end{proof}

\subsection{Bianchi orbifolds without closed embedded totally geodesic surfaces}\label{none}
When the ideal class group of $\mathbb{Q}(\sqrt{-d})$
does not contain an element of order $4$, we can say more about embedded closed totally geodesic surfaces, because we know from \cite[Theorem 8]{Vul} that every immersed totally geodesic surfaces is of the form \eqref{rep}.
\begin{lemma}\label{lem:none}
The surface associated to $\sigma_r^{-1} \mathcal{S}_{B,c}$ is an embedded closed totally geodesic surface in $\Omega_d$, only if $D/d=|B|^2-dc \in \mathcal{D}$, where $\mathcal{D}$ is defined in \eqref{D}.
\end{lemma}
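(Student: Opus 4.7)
I plan to verify the three defining conditions of $\mathcal{D}$ from \eqref{D}: $0 < |B|^2 - dc < d/4$, $|B|^2 - dc$ is a square modulo $d$, and $|B|^2 - dc$ is not of the form $\alpha^2 + d\beta^2$ with $\alpha, \beta \in \mathbb{Q}$.

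First I would reduce the problem about $\sigma_r^{-1}\mathcal{S}_{B,c}$ to the corresponding assertion about $\mathcal{S}_{B,c}$ itself. Since $\Gamma_d$ is the kernel of $\det: \PGL(2, \mathcal{O}_d) \to \{\pm 1\}$, it is characteristic in $\PGL(2, \mathcal{O}_d)$, and because $\PGL(2, \mathcal{O}_d)$ is normal in $B_d$, $\Gamma_d$ is normal in $B_d$. Hence $\tilde\sigma_r^{-1} \in B_d$ descends to an isometry of $\Omega_d$ sending $\mathcal{S}_{B,c}$ to $\sigma_r^{-1}\mathcal{S}_{B,c}$. Embeddedness and closedness are invariant under isometries of $\Omega_d$, so $\mathcal{S}_{B,c}$ is itself embedded and closed in $\Omega_d$, and its discriminant equals $D = d(|B|^2 - dc)$.

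For the square-mod-$d$ condition, each prime $p \mid d$ ramifies in $\mathcal{O}_d$ (since $d$ is square-free), so the unique prime $\mathcal{P}$ above $p$ satisfies $\mathcal{O}_d/\mathcal{P} \cong \mathbb{F}_p$, and reducing modulo $\mathcal{P}$ collapses $B$ and $\bar B$ to the same image $b \in \mathbb{F}_p$; hence $|B|^2 = B\bar B \equiv b^2 \pmod{p}$. CRT across the prime divisors of the square-free $d$ then yields that $|B|^2 - dc \equiv |B|^2 \pmod{d}$ is a square modulo $d$. For non-representability, Lemma \ref{straight_line}(2) applied to the closed surface $\mathcal{S}_{B,c}$ gives that $D$ is not of the form $\alpha^2 + d\beta^2$ with $\alpha, \beta \in \mathbb{Q}$; the elementary substitution $(\alpha, \beta) = (d\beta', \alpha')$ bijects solutions of $D = \alpha^2 + d\beta^2$ with those of $D/d = \alpha'^2 + d\beta'^2$ in $\mathbb{Q}$, so $|B|^2 - dc$ is not of this form either.

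The main step is the upper bound $|B|^2 - dc < d/4$ (positivity being immediate from $D > 0$). Applying Corollary \ref{para:cor} to the embedded surface $\mathcal{S}_{B,c}$ with Hermitian matrix $A_{B,c}$, the parabolic element $\gamma = \begin{pmatrix}1 & 0 \\ -1 & 1\end{pmatrix} \in \Gamma_d$ (arising from Lemma \ref{lem:para} with $(t, x, y) = (1, 1, 0)$) gives $Q_{A_{B,c}}(-1, 0) = d$, whence
\[
4D \leq d^2, \qquad \text{i.e.,} \qquad |B|^2 - dc \leq d/4.
\]
Since $d$ is square-free we have $4 \nmid d$, so $d/4 \notin \mathbb{Z}$, while $|B|^2 - dc \in \mathbb{Z}$ in both integral-basis cases for $\mathcal{O}_d$; the inequality is therefore strict. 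The principal obstacle is the reduction in the first paragraph, where one must verify that $\tilde\sigma_r$ normalizes $\Gamma_d$ (and not merely $\PGL(2, \mathcal{O}_d)$); once this is in place, everything else is a direct application of Corollary \ref{para:cor} to a single explicit parabolic together with elementary manipulations of the norm form.
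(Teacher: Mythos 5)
Your argument is correct and follows essentially the same route as the paper: both proofs extract the bound $0<|B|^2-dc<d/4$ from the trace of a single parabolic element, observe that $|B|^2-dc\equiv b_1^2 \pmod{d}$ is a square modulo $d$, and invoke Lemma \ref{straight_line}(2) (really its ``if and only if'' form, as the authors use it) for the non-representability condition. The structural difference is at the start: you reduce to $\mathcal{S}_{B,c}$ by appealing to the normality of $\Gamma_d$ in $B_d$ and then cite Corollary \ref{para:cor}, whereas the paper sidesteps normality entirely by exhibiting $\sigma_r^{-1}\left(\begin{smallmatrix}1&1\\0&1\end{smallmatrix}\right)\sigma_r$ as an explicit element of $\Gamma_d$ and using invariance of the trace under conjugation by $\sigma_r$. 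Your reduction is legitimate (the paper itself asserts $\sigma_r^{-1}\Gamma_d\sigma_r=\Gamma_d$ in the proof of Proposition \ref{thereis}), though ``kernel of $\det$, hence characteristic'' is not quite a proof --- what you actually need is that conjugation by elements of $B_d\subset\PGL(2,\mathbb{C})$ preserves determinants. One detail deserves attention: a direct computation of $\mathrm{Tr}(\gamma^*A\gamma A^{-1})$ for $\gamma=\left(\begin{smallmatrix}1&1\\0&1\end{smallmatrix}\right)$ gives $2-a^2/D$, so the displayed formula in Lemma \ref{lem:para} appears to have its arguments transposed (it should read $Q_A(\bar y,\bar x)$). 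Consequently the parabolic witnessing the value $d=Q_A(1,0)$ is $\left(\begin{smallmatrix}1&1\\0&1\end{smallmatrix}\right)$, i.e.\ $(t,x,y)=(1,0,1)$ --- exactly the element the paper conjugates --- and not $\left(\begin{smallmatrix}1&0\\-1&1\end{smallmatrix}\right)$, whose trace is $2-(dc)^2/D$ and yields a useless bound. Since Corollary \ref{para:cor} is stated as a minimum over all $(x,y)$ with $Q_A(x,y)\neq 0$ and is valid under either convention, your conclusion $4D\leq d^2$, hence $|B|^2-dc\leq d/4$ with strictness from $4\nmid d$ and integrality of the norm, stands; your careful justification of the strict inequality is a point the paper glosses over.
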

\begin{proof}
We first note that
\[
\sigma_r^{-1} \begin{pmatrix} 1 & 1 \\ 0 & 1 \end{pmatrix} \sigma_r = \begin{pmatrix} 1-uv\sqrt{-d} & -u^2s \\ -v^2r & 1+uv\sqrt{-d} \end{pmatrix} \in \Gamma_d.
\]
Hence if the surface corresponding to $\sigma_r^* A \sigma_r$ is embedded, then
\begin{multline*}
\mathrm{Tr}\left(\left(\sigma_r^{-1} \begin{pmatrix} 1 & 1 \\ 0 & 1 \end{pmatrix} \sigma_r\right)^*\sigma_r^* A \sigma_r\sigma_r^{-1} \begin{pmatrix} 1 & 1 \\ 0 & 1 \end{pmatrix} \sigma_r \left(\sigma_r^* A \sigma_r\right)^{-1}\right) \\
= \mathrm{Tr}\left(\sigma_r^*\begin{pmatrix} 1 & 0 \\ 1 & 1 \end{pmatrix}  A  \begin{pmatrix} 1 & 1 \\ 0 & 1 \end{pmatrix} A^{-1} \left(\sigma_r^*\right)^{-1}\right) = \mathrm{Tr}\left(\begin{pmatrix} 1 & 0 \\ 1 & 1 \end{pmatrix}  A  \begin{pmatrix} 1 & 1 \\ 0 & 1 \end{pmatrix} A^{-1}\right)
\end{multline*}
must have modulus greater than or equal to $2$ by Corollary \ref{cor_embed}. This is equivalent to saying that $D=d(|B|^2-dc)<\frac{d^2}{4}$, i.e., $0<b_1^2+db_2^2-dc<d/4$, where $B=b_1+\sqrt{-d}b_2$.

Observing that $b_1^2+db_2^2-dc$ is a square modulo $d$ in $(0,d/4)$, and from the assumption that $A$ corresponds to a closed surface, we see that $D/d$ must belong to $\mathcal{D}$.
\end{proof}
\begin{theorem}\label{theorem_none}
For $d \in \mathcal{E}_1$ given in the appendix, $\Omega_d$, does not contain any closed embedded totally geodesic surface.
\end{theorem}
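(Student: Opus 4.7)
The plan is to combine the parametrization of immersed totally geodesic surfaces from \cite[Theorem 8]{Vul} with the necessary condition from Lemma \ref{lem:none} to reduce the theorem to a finite numerical check for each $d \in \mathcal{E}_1$. First I would verify that every $d$ in the explicit list $\mathcal{E}_1$ has the property that the ideal class group of $\mathbb{Q}(\sqrt{-d})$ contains no element of order $4$; this is a finite computation and guarantees that Vulakh's result applies, so that every immersed totally geodesic surface in $\Omega_d$ corresponds to a circle of the form $\sigma_r^{-1}\mathcal{C}_{B,c}$ as in \eqref{rep}. (This is presumably how $\mathcal{E}_1$ is defined, and explains for instance the exclusion of $d=39$ remarked after Theorem \ref{cuspsummary}.)

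Given this parametrization, Lemma \ref{lem:none} asserts that any embedded closed totally geodesic surface in $\Omega_d$ must have $D/d \in \mathcal{D}$, where
\[
\mathcal{D} = \{0 < r < d/4 : r \text{ is a square mod } d \text{ and } r \neq \alpha^2+d\beta^2 \text{ for any } \alpha,\beta \in \mathbb{Q}\}.
\]
Thus it suffices to show that $\mathcal{D} = \emptyset$ for each $d \in \mathcal{E}_1$. To check this efficiently I would use Legendre's theorem, already invoked in the proof of Theorem \ref{lem2}: a positive integer $r$ has the form $\alpha^2 + d\beta^2$ with $\alpha,\beta \in \mathbb{Q}$ if and only if $r$ is a square modulo $d$ and $-d$ is a square modulo the squarefree part of $r$. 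Consequently $\mathcal{D} = \emptyset$ is equivalent to the assertion that, for every $0 < r < d/4$ which is a square modulo $d$, $-d$ is a quadratic residue modulo the squarefree part of $r$.

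For each $d \in \mathcal{E}_1$ the interval $(0, d/4)$ contains only a few integers, so one can enumerate all squares modulo $d$ in this range and verify Legendre's condition on a case-by-case basis. Once $\mathcal{D} = \emptyset$ is confirmed, Lemma \ref{lem:none} immediately gives that $\Omega_d$ has no embedded closed totally geodesic surface, completing the proof. The main obstacle is organizational rather than conceptual: the argument is unavoidably an explicit finite check, and the set $\mathcal{E}_1$ is presumably delineated precisely as the values of $d$ (with no class-group element of order $4$) for which this Legendre verification succeeds uniformly across the interval $(0, d/4)$. For values of $d$ where the class group does admit an element of order $4$, or where some square residue $r$ in $(0, d/4)$ violates Legendre's condition, a different or more delicate argument would be required, which is why such $d$ lie outside $\mathcal{E}_1$.
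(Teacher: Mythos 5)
Your proposal matches the paper's own proof: the paper likewise verifies that for $d \in \mathcal{E}_1$ the class group of $\mathbb{Q}(\sqrt{-d})$ has no element of order $4$, invokes \cite[Theorem 8]{Vul} to parametrize all immersed totally geodesic surfaces by circles of the form \eqref{rep}, and then applies Lemma \ref{lem:none} to conclude that $D/d$ must lie in $\mathcal{D}$, which is checked (by finite computation) to be empty for these $d$. Your additional observation that Legendre's theorem reduces the emptiness of $\mathcal{D}$ to a quadratic-residue check is exactly the mechanism already used in the proof of Theorem \ref{lem2} and implicitly in the appendix computation.
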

\begin{proof}
If $d\in \mathcal{E}_1$, then the ideal class group of $\mathbb{Q}(\sqrt{-d})$
does not contain an element of order $4$, and so every immersed totally geodesic surface corresponds to a circle of the form $\sigma_r^{-1} \mathcal{S}_{B,c}$.
Then by Lemma \ref{lem:none}, $D/d$ must belong to $\mathcal{D}$, which is empty for those $d\in \mathcal{E}_1$.
\end{proof}
$\mathcal{E}$ is the list of $d < 10^9$ for which $\mathcal{D}$ is empty. So we expect for these discriminants in $\mathcal{E}$, $\Omega_d$ has no closed embedded totally geodesic surfaces. In principle, one only needs to check finitely many cases to determine whether our expectation is true. However, because this will involve a serious amount of numerical experiment, we leave this as an open question.
\begin{remark}
Yet another special case is when $d=p$ is a prime which is $3 \pmod{4}$. In this case, again by \cite{Vul}, $\mathcal{S}_{m,c}$ with $m,c \in \mathbb{Z}$, $0\leq m<p$ parameterize every immersed totally geodesic surfaces in $\Omega_p$. So by Lemma \ref{lem:none}, only possible pairs of $m,c$ are those satisfying
\[
0<m^2-pc < p/4,
\]
hence there are at most $O(p)$ embedded closed totally geodesic surfaces. Hence at least for such $d$'s, the lower bound in Corollary \ref{lower} is sharp in terms of the order of magnitude.
\end{remark}
\section{The Picard group}
\label{picard}
We now focus on the case of the Picard group $\Gamma_1$. Theorem \ref{main} implies that $\Omega_1$ does not have any closed embedded totally geodesic surfaces (which as noted in \S \ref{links} also follows from the existence of alternating link complement coverings of $\Omega_1$).  From the discussion in \S \ref{congruence} it is particularly interesting to consider embedding closed totally geodesic surfaces in quotients of $\mathbb{H}^3/\Gamma$ where $\Gamma$ is a congruence subgroup of $\Gamma_1$.
Hence it is natural to investigate the existence of embedded closed totally geodesic surfaces for quotients of $\mathbb{H}^3$ by congruence subgroups of $\Gamma_1$, and in this section, we analyze certain principal congruence subgroups
$\Gamma(I):= \Gamma_1(I)$.

\subsection{Additional facts in the case of $\Gamma_1$}
\label{circles1}

We begin with the following classification result proved in \cite{MR1}.

\begin{theorem}[\cite{MR1}]\label{classi}
Any totally geodesic surface immersed in $\Omega_1$ corresponds to a circle $\mathcal{C}$ which is $\Gamma_1$-equivalent to one of the following:
\begin{itemize}
\item $\mathcal{C}_D~:~|z|^2-D=0$,
\item $\mathcal{C}_{D,1}~:~2|z|^2+z+\bar{z}-\frac{D-1}{2}=0$ (when $D\equiv 1 \pmod{4}$),
\item $\mathcal{C}_{D,2}~:~2|z|^2+iz-i\bar{z}-\frac{D-1}{2}=0$ (when $D\equiv 1 \pmod{4}$), or
\item $\mathcal{C}_{D,3}~:~2|z|^2+(1+i)z+(1-i)\bar{z}-\frac{D-2}{2}=0$ (when $D\equiv 2 \pmod{4}$).
\end{itemize}
\end{theorem}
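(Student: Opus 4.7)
The plan is to prove Theorem~\ref{classi} by explicit reduction theory for primitive indefinite binary Hermitian forms over $\mathbb{Z}[i]$ under the $\Gamma_1$-action $A \mapsto \gamma^* A \gamma$. By the parametrization of \S\ref{parameter}, each immersed totally geodesic surface in $\Omega_1$ corresponds to a $\Gamma_1$-orbit of a primitive Hermitian matrix $A=\begin{pmatrix}a & B \\ \bar B & c\end{pmatrix}$ with $a,c\in\mathbb{Z}$, $B\in\mathbb{Z}[i]$, and discriminant $D=|B|^2-ac>0$; it is enough to show each such orbit contains a representative of one of the four listed forms.

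Three types of moves generate the reductions I need. First, translations $\gamma_t=\begin{pmatrix}1 & t\\ 0 & 1\end{pmatrix}$, $t\in\mathbb{Z}[i]$, fix $a$ and shift $B\mapsto B+at$; choosing $t$ a nearest Gaussian integer to $-B/a$ gives $|B|\leq|a|/\sqrt{2}$. Second, the inversion $J=\begin{pmatrix}0 & -1\\ 1 & 0\end{pmatrix}$ sends $(a,B,c)\mapsto(c,-\bar B,a)$, permitting one to place the smaller of $|a|,|c|$ in the $(1,1)$-slot. Third, a general $\gamma=\begin{pmatrix}p & q\\ r & s\end{pmatrix}\in\Gamma_1$ replaces $a$ by $Q_A(\bar p,\bar r)$, where $(p,r)\in\mathbb{Z}[i]^2$ is any primitive pair. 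Iterating the first two moves reaches a form with $|a|\leq|c|$ and $|B|\leq|a|/\sqrt{2}$, which terminates because $|a|\in\mathbb{Z}_{>0}$.

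The crux is to show that a globally reduced representative has $|a|\in\{1,2\}$. This requires verifying: if $|a|\geq 3$ in the translation/inversion-reduced form, then there exists a primitive $(p,r)\in\mathbb{Z}[i]^2\setminus\{(1,0),(0,1)\}$ with $|Q_A(p,r)|<|a|$, after which the third move yields further reduction. The existence of such $(p,r)$ follows from a case analysis on the residues of $B$ and $c$ modulo $a$, exploiting the Euclidean structure of $\mathbb{Z}[i]$; for instance, for $|a|=3$ with $B$ satisfying $|B|\leq 3/\sqrt 2$, evaluating $Q_A$ at small primitive vectors like $(1,1), (1,i), (1,1+i),\ldots$ must hit a value of absolute value $\leq 2$ because the form $Q_A$ on the real lattice $\mathbb{Z}[i]^2\cong\mathbb{Z}^4$ is an indefinite quaternary real quadratic form of discriminant $D^2$, whose minimum represented positive integer is bounded by a Hermite-type constant that forces $\leq 2$ under the normalisation $|a|\leq|c|$, $|B|\leq|a|/\sqrt 2$. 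This is where I expect the heaviest bookkeeping, and it is the main obstacle.

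Once $|a|\in\{1,2\}$, the listed forms emerge by inspection. If $|a|=1$, then $|B|<1$ forces $B=0$ and $c=-D$, giving $\mathcal{C}_D$. If $|a|=2$, the inequality $|B|^2\leq 2$ restricts $B$ to $\{0,\pm 1,\pm i,\pm(1\pm i)\}$; the integrality of $c=(|B|^2-D)/2$ and primitivity of $(a,B,c)$ force $|B|^2=1$ (so $D\equiv 1\pmod 4$, giving $B=\pm 1$ and $B=\pm i$, i.e.\ $\mathcal{C}_{D,1}$ and $\mathcal{C}_{D,2}$) or $|B|^2=2$ (so $D\equiv 2\pmod 4$, giving $B\in\{\pm(1\pm i)\}$, all $\Gamma_1$-equivalent to $B=1+i$ by translations $t\in 2\mathbb{Z}[i]$ and conjugation by $\mathrm{diag}(i,-i)$, yielding $\mathcal{C}_{D,3}$); the case $|B|^2=0$ is eliminated either by non-primitivity or by applying step three once more to reduce back to $|a|=1$. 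Finally, if the initial $\mathcal{C}$ is a straight-line ($a=0$), one applies $J$ (after a preliminary translation to move $\infty$ off the line if necessary) to reach a form with $a\neq 0$, whereupon the previous analysis applies.
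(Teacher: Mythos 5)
First, note that the paper does not prove Theorem~\ref{classi}; it is quoted from \cite{MR1}, so your argument has to stand entirely on its own. Your reduction skeleton is sound: the translation and inversion moves are computed correctly, the identification $(\gamma^*A\gamma)_{11}=Q_A(\bar p,\bar r)$ is right, and the final inspection of the cases $|a|=1$ and $|a|=2$ (including the unit/translation argument identifying the four values of $B$ with $|B|^2=2$) does recover exactly the four listed circles. The problem is that the entire proof rests on the step you yourself flag as ``the main obstacle,'' and the justification you offer for it is not merely incomplete but quantitatively wrong. You need that a translation/inversion-reduced primitive form has $|a|\leq 2$, i.e.\ that every primitive indefinite binary Hermitian form over $\mathbb{Z}[i]$ primitively represents a value of absolute value at most $2$. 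You propose to get this from ``a Hermite-type constant'' for the associated quaternary form on $\mathbb{Z}^4$. But that form has determinant of order $D^2$, so any geometry-of-numbers or Blaney/Oppenheim-type minimum bound for indefinite quaternary forms gives only $\min|q|\ll (D^2)^{1/4}=\sqrt{D}$ --- consistent with the trivial estimate $a^2\leq |B|^2+a|c|=D$ already implied by your normalisation, and nowhere near the absolute constant $2$ you need. No normalisation of $a,B,c$ rescues this: the bound $\leq 2$ is a genuinely arithmetic feature of Hermitian forms over $\mathbb{Z}[i]$, not a lattice-packing one, and it is false for general $\mathcal{O}_d$ (which is precisely why the paper's Theorem~\ref{upper} only achieves $\ll\gcd(d,D)\log^2(D/\gcd(d,D))$ in general, and why the classification for general $d$ is more complicated).

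The correct route to the missing step is the local--global principle for indefinite quaternary integral forms (\cite[p.~131]{MR522835}, already quoted before Theorem~\ref{upper}): one checks that the quaternary form attached to $Q_A$ represents $1$ (or, at $p=2$, represents $1$ or $2$) over every $\mathbb{Z}_p$, the only nontrivial primes being those dividing $2D$, where primitivity of $(a,B,c)$ and a completion-of-the-square argument do the work; the $2$-adic analysis is exactly what produces the congruence conditions $D\equiv 1$ and $D\equiv 2\pmod 4$ appearing in the statement, and one must also arrange that the representing vector $(p,r)\in\mathbb{Z}[i]^2$ is unimodular so that it extends to an element of $\SL(2,\mathbb{Z}[i])$. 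Be aware that the resulting fact is precisely Remark~\ref{hermitian_represent}, which the paper \emph{deduces from} Theorem~\ref{classi}; so if you take this route you must prove the representability claim independently, not quote the remark. As written, your proposal identifies the right reduction framework but leaves its load-bearing step unestablished.
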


We will use the notation $F_D,~F_{D,1},~F_{D,2},$ and $F_{D,3}$ to denote the subgroups $\Stab^+(C_{D,*},\Gamma_1)$. For convenience, we record a special case of Lemma \ref{straight_line}(2).

\begin{lemma}
\label{sumofsquares}
$F_D,~F_{D,1},~F_{D,2},$ and $F_{D,3}$ are non-cocompact arithmetic Fuchsian groups if and only $D$ is the sum of two squares.\end{lemma}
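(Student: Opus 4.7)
The plan is to reduce the statement to a standard computation with Hilbert symbols. By \cite[Chapter 9.6]{MR2}, for any circle $\mathcal{C}$ of discriminant $D$ appearing in Theorem \ref{classi}, the invariant quaternion algebra of $\Stab^+(\mathcal{C},\Gamma_1)$ has Hilbert symbol
\[
\biggl(\frac{-1,D}{\mathbb{Q}}\biggr).
\]
The well-known arithmetic dichotomy for arithmetic Fuchsian groups asserts that such a group is non-cocompact if and only if its invariant quaternion algebra is isomorphic to $M(2,\mathbb{Q})$. So the task reduces to showing that this quaternion algebra is split over $\mathbb{Q}$ precisely when $D$ is the sum of two integer squares.

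Next I would unwrap the Hilbert symbol. The algebra $\bigl(\tfrac{-1,D}{\mathbb{Q}}\bigr)$ is isomorphic to $M(2,\mathbb{Q})$ if and only if the equation $-x^2+Dy^2=1$ has a solution with $x,y\in\mathbb{Q}$, or equivalently $D$ is a norm from $\mathbb{Q}(i)$, i.e.\ $D=\alpha^2+\beta^2$ for some $\alpha,\beta\in\mathbb{Q}$. This is exactly the specialization of Lemma \ref{straight_line}(2) to $d=1$; one direction of that lemma already provides the ``only if'' half of the present lemma, so what I really need to produce is a small additional ingredient, described below.

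The remaining ingredient is the classical fact that a positive integer $D$ is a sum of two rational squares if and only if it is a sum of two integer squares. I would invoke this directly (it follows from the standard characterization that $D$ is a sum of two integer squares iff every prime $p\equiv 3\pmod 4$ divides $D$ to an even power, together with the fact that the same condition characterizes being a norm from $\mathbb{Q}(i)$, using Hasse--Minkowski or a direct descent argument). With this in hand, the equivalence
\[
\Bigl(\tfrac{-1,D}{\mathbb{Q}}\Bigr)\cong M(2,\mathbb{Q})\iff D=a^2+b^2,\ a,b\in\mathbb{Z}
\]
is established.

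Putting the pieces together: the ``only if'' direction is immediate from Lemma \ref{straight_line}(2) combined with the rational-vs-integer squares fact. For the ``if'' direction, if $D=a^2+b^2$ with $a,b\in\mathbb{Z}$, then $(x,y)=(a/b, 1/b)$ (assuming $b\neq 0$; handle $b=0$ trivially) solves $-x^2+Dy^2=1$ over $\mathbb{Q}$, hence the invariant quaternion algebra is $M(2,\mathbb{Q})$, and the arithmetic Fuchsian group is non-cocompact. I do not foresee a real obstacle here; the only subtle point is making sure the classical ``sum of two rational squares equals sum of two integer squares'' equivalence is cleanly cited rather than rederived.
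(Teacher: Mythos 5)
Your proposal is correct and follows essentially the same route as the paper: the paper records this lemma precisely as the $d=1$ special case of Lemma \ref{straight_line}(2), whose proof is exactly your Hilbert-symbol computation showing that $\bigl(\tfrac{-1,D}{\mathbb{Q}}\bigr)$ splits iff $-x^2+Dy^2=1$ is rationally solvable iff $D$ is a sum of two rational squares. Your explicit attention to the rational-versus-integer squares equivalence is a point the paper leaves implicit, but it is not a different method.
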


A simple consequence of this and Theorem \ref{classi} is the following.

\begin{lemma}\label{lem12}
Suppose that $D$ is not a sum of two squares, and let $C_{D,*}$ denote one of the circles listed in Theorem \ref{classi}. Then $\begin{pmatrix}1 & 1 \\ 0 &1 \end{pmatrix}C_{D,*} \cap C_{D,*}$ is nontrivial.
\end{lemma}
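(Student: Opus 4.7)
The plan is to exhibit an explicit parabolic element of $\Gamma_1$ that translates $\mathcal{C}_{D,*}$ to a circle meeting it nontrivially, and then apply the intersection criterion of Lemma \ref{lem:tr}. The natural choice is the unit translation $T = \begin{pmatrix} 1 & 1 \\ 0 & 1 \end{pmatrix} \in \Gamma_1$, which acts on $\partial \mathbb{H}^3$ by $z \mapsto z + 1$.

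First I would note that each circle listed in Theorem \ref{classi} has leading coefficient $a \in \{1, 2\}$: namely $a = 1$ for $\mathcal{C}_D$, and $a = 2$ for $\mathcal{C}_{D,1}$, $\mathcal{C}_{D,2}$, $\mathcal{C}_{D,3}$. Using the description in \S\ref{parameter} that $\mathcal{C}_{D,*}$ is a Euclidean circle of radius $\sqrt{D}/|a|$ centered at $-\overline{B}/a$, applying $T$ leaves the radius unchanged and translates the center by $1$. Two congruent circles whose centers lie at Euclidean distance $1$ meet in two points precisely when $2\sqrt{D}/|a| > 1$, equivalently $4D > a^2$. One can also verify this directly by computing $\mathrm{Tr}(T^* A T A^{-1}) = 2 - a^2/D$ and invoking Lemma \ref{lem:tr}.

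Next I would check the inequality $4D > a^2$ in each of the four normal forms. For $\mathcal{C}_D$ with $a = 1$, the inequality $4D > 1$ is automatic for every positive discriminant. For $\mathcal{C}_{D,1}$ and $\mathcal{C}_{D,2}$ with $a = 2$ and $D \equiv 1 \pmod 4$, the condition becomes $D > 1$, and the only $D \equiv 1 \pmod 4$ failing this is $D = 1 = 1^2 + 0^2$, a sum of two squares ruled out by hypothesis. For $\mathcal{C}_{D,3}$ with $a = 2$ and $D \equiv 2 \pmod 4$, one has $D \geq 2$ automatically, but $D = 2 = 1^2 + 1^2$ is again a sum of two squares and excluded. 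In every remaining case $4D > a^2$, so $T \mathcal{C}_{D,*} \cap \mathcal{C}_{D,*}$ is nonempty.

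There is no serious obstacle in this argument: once one recognizes $T$ as a unit horizontal translation and inspects the leading coefficient of each normal form in Theorem \ref{classi}, the hypothesis that $D$ is not a sum of two squares is precisely what rules out the small discriminants ($D = 1$ for $\mathcal{C}_{D,1}, \mathcal{C}_{D,2}$, and redundantly $D = 2$ for $\mathcal{C}_{D,3}$) where the circle would be too small for its unit translate to meet it.
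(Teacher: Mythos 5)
Your proof is correct and follows essentially the same route as the paper: the paper simply observes that each circle in Theorem \ref{classi} has radius $\sqrt{D}$ or $\sqrt{D}/2$, which is large enough (since $D\geq 3$ when $D$ is not a sum of two squares) for the unit translate to meet the original circle. Your version is if anything slightly more careful, deriving the sharp condition $4D>a^2$ (equivalently $|\mathrm{Tr}(T^*ATA^{-1})|=|2-a^2/D|<2$) and checking it against the congruence conditions on $D$ in each normal form.
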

\begin{proof}
Because each of these circles has radius $\sqrt{D}$ or $\sqrt{D}/2$, and because $D\geq 3$, each of the radii is $>1$.  Hence translating by $1$ gives a nontrivial intersection.
\end{proof}

\begin{remark} \label{hermitian_represent}
Note that another consequence of Theorem \ref{classi} is that, in the context of $\Gamma_1$, any primitive binary Hermitian form associated to a primitive triple $(a,B,c)$ as in \S \ref{parameter} represents $1$ or $2$.\end{remark}

\subsection{Degree one primes}
\label{split}
In this subsection we will prove to prove Theorem \ref{main2}. Thus throughout this section $\mathcal{P}$ is a degree one prime in $\mathbb{Z}[i]$, or $\mathcal{P}=<1+i>$.

\begin{lemma}\label{lem22}
Assume that $(p,2D)=1$. Then $F_D,~F_{D,1},~F_{D,2},$ and $F_{D,3}$ surject onto $\PSL(2,\mathbb{Z}[i]/\mathcal{P})$ under the homomorphism induced by reduction modulo $\mathcal{P}$.
\end{lemma}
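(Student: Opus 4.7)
The plan is to realize each of $F_{D}, F_{D,1}, F_{D,2}, F_{D,3}$ as a subgroup commensurable with the norm-one units of a maximal $\mathbb{Z}$-order $\mathcal{O}$ in the indefinite rational quaternion algebra $B=\left(\frac{-1,D}{\mathbb{Q}}\right)$, and then to deduce the desired surjectivity from strong approximation. By Lemma~\ref{fuchsian} together with the recipe for invariant quaternion algebras in \cite[Chapter~9.6]{MR2}, each of the circles $\mathcal{C}_{D}, \mathcal{C}_{D,1}, \mathcal{C}_{D,2}, \mathcal{C}_{D,3}$ has discriminant $D$, so the invariant quaternion algebra of each stabilizer $F_{D,\ast}<\Gamma_{1}$ is $B=\left(\frac{-1,D}{\mathbb{Q}}\right)$, and each $F_{D,\ast}$ is commensurable in $\PSL(2,\mathbb{R})$ with $\mathcal{O}^{1}/\{\pm I\}$.

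Next I would verify that $p$ is a prime of good reduction for $B$. Because $D>0$, $B$ is split at $\mathbb{R}$, hence indefinite; and because the reduced discriminant of $B$ divides $2D$, the hypothesis $(p,2D)=1$ yields $B\otimes_{\mathbb{Q}}\mathbb{Q}_{p}\cong M(2,\mathbb{Q}_{p})$ with $\mathcal{O}\otimes_{\mathbb{Z}}\mathbb{Z}_{p}\cong M(2,\mathbb{Z}_{p})$. Strong approximation for the simply connected algebraic group $B^{1}$ (applicable because $B^{1}(\mathbb{R})\cong\SL(2,\mathbb{R})$ is noncompact) then gives that $\mathcal{O}^{1}$ surjects onto $\SL(2,\mathbb{F}_{p})$; passing to $\PSL$ produces a surjection onto $\PSL(2,\mathbb{F}_{p})$.

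The remaining step is to reconcile this intrinsic strong-approximation reduction with the extrinsic one inherited from $\Gamma_{1}\subset\PSL(2,\mathbb{Z}[i])$. Since $\mathcal{P}$ is a degree-one prime over $p$, one has $\mathbb{Q}(i)\otimes\mathbb{Q}_{p}\cong\mathbb{Q}_{p}\times\mathbb{Q}_{p}$, and the explicit embedding $B\hookrightarrow M(2,\mathbb{Q}(i))$ used to realize $F_{D,\ast}\hookrightarrow\Gamma_{1}$ becomes, after projection onto the factor corresponding to $\mathcal{P}$, an isomorphism $B\otimes\mathbb{Q}_{p}\xrightarrow{\sim}M(2,\mathbb{Q}_{p})$ carrying $\mathcal{O}\otimes\mathbb{Z}_{p}$ onto $M(2,\mathbb{Z}_{p})$. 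Via the identification $\mathbb{Z}[i]/\mathcal{P}\cong\mathbb{F}_{p}$, reduction modulo $\mathcal{P}$ in the ambient $\PSL(2,\mathbb{Z}[i])$ therefore agrees with the intrinsic mod-$p$ reduction of $\mathcal{O}^{1}/\{\pm I\}$, and combining this with the preceding paragraph yields $F_{D,\ast}\twoheadrightarrow\PSL(2,\mathbb{Z}[i]/\mathcal{P})$.

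The main obstacle is exactly this last bookkeeping: one has to check that the abstract strong-approximation reduction $\mathcal{O}^{1}\twoheadrightarrow\SL(2,\mathbb{F}_{p})$ genuinely coincides, up to an inner automorphism of $M(2,\mathbb{F}_{p})$, with the ambient reduction $\Gamma_{1}\to\PSL(2,\mathbb{Z}[i]/\mathcal{P})$ restricted to $F_{D,\ast}$. Once that identification is pinned down nothing further is required. A more elementary alternative would be to exhibit enough explicit elements of each $F_{D,\ast}$ (for instance Pell-type solutions of $a^{2}-Dc^{2}=1$ together with their $i$-twists) and then invoke Dickson's classification of subgroups of $\PSL(2,\mathbb{F}_{p})$ to rule out every proper subgroup as a possible image; this avoids strong approximation but is considerably more case-heavy.
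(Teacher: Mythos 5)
Your overall strategy is the same as the paper's: identify each $F_{D,\ast}$ with a unit group of an order in the quaternion algebra $\left(\frac{-1,D}{\Q}\right)$, use $(p,2D)=1$ to see the algebra and order are unramified/split at $p$, invoke strong approximation to get density in $\SL(2,\Z_p)$, and use the degree-one hypothesis $\Z[i]_{\mathcal P}\cong\Z_p$ to match the intrinsic mod-$p$ reduction with the ambient reduction mod $\mathcal P$. Your third paragraph correctly isolates the one piece of bookkeeping (compatibility of the two reductions) that the paper also checks.

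However, there is a genuine gap in how you set this up: you only claim that each $F_{D,\ast}$ is \emph{commensurable} with $\mathcal{O}^{1}/\{\pm I\}$ for a maximal order $\mathcal{O}$, and then deduce surjectivity of $F_{D,\ast}$ from surjectivity of $\mathcal{O}^{1}$. Commensurability is not enough for a \emph{fixed} prime $p$: a finite-index subgroup of $\mathcal{O}^{1}$ can have arbitrarily small image modulo a given $p$ (compare $\Gamma(p)<\SL(2,\Z)$, which has trivial image mod $p$). Strong approximation applied to $\mathcal{O}^1$ transfers to a commensurable group only for all but finitely many primes, whereas Lemma \ref{lem22} is needed for every $p$ coprime to $2D$ and is applied later with $p$ fixed in advance. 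The paper closes this gap by quoting the precise identifications from \cite[\S 6.4, \S 6.5]{MR1}: $F_{D}$, $F_{D,1}$ and $F_{D,3}$ are \emph{equal to} the images of the full norm-one unit groups of explicit orders $O=\Z[1,i,j,ij]$, $\mathcal{M}$, $\mathcal{N}$ (not merely commensurable with them), these orders differ from one another only at the prime $2$ and hence are maximal at $p$, and $F_{D,2}$ is handled by conjugating to $F_{D,1}$ inside $\PGL(2,\mathcal{O}_1)$ via $\begin{pmatrix}i&0\\0&1\end{pmatrix}$. With the exact identification in hand, density of $O^1$, $\mathcal{M}^1$, $\mathcal{N}^1$ in $\SL(2,\Z_p)$ does yield the surjection onto $\SL(2,\Z[i]/\mathcal P)$, and the rest of your argument goes through. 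So the missing ingredient is the citation (or proof) that the stabilizers are the full unit groups of orders maximal at $p$; without it the deduction fails as stated.
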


\begin{proof} Note that although $F_{D,1}$ and $F_{D,2}$ are not $\Gamma_1$-conjugate, they are $\PGL(2,\mathcal{O}_1)$-conjugate (using the element
$\mathrm{P}\begin{pmatrix}i & 0 \\ 0 &1 \end{pmatrix})$. For our purposes it suffices to work with only $F_{D,1}$.

It will be convenient to work in $\SL(2,\mathbb{Z}[i])$ and show that the lifts of $F_D,~F_{D,1},~F_{D,2},$ and $F_{D,3}$ surject onto $\SL(2,\mathbb{Z}[i]/\mathcal{P})$. For convenience we continue to denote these by
$F_D,~F_{D,1},~F_{D,2},$ and $F_{D,3}$.

We recall some more information from \cite{MR1} about the groups  $F_D,~F_{D,1}$ and $F_{D,3}$.
From \cite[\S 6.4, \S 6.5]{MR1}, the groups $F_{D,1}$ and $F_{D_3}$ (when they are defined) can be identified with the images of the elements of norm
$1$ in orders $\mathcal{M}$ and $\mathcal{N}$ of the quaternion algebra $B_D=\biggl ({\frac{-1,D}{\mathbb{Q}}}\biggr )$.  In addition, in all cases, the group $F_D$ can be identified with the image of the elements
of norm $1$ in the order $O=\mathbb{Z}[1,i,j,ij] \subset B_D$. As noted in \cite[\S 6.4, \S 6.5]{MR1}, $O$ differs from the orders $\mathcal{M}$ and $\mathcal{N}$ only at the prime $2$.

Any prime that ramifies the algebra $B_D$ divides $2D$, and since $p$ is chosen so that $(p,2D)=1$, $B_D$ is unramified at $p$. Furthermore, we also note that locally at $p$, $O$ is maximal.
We therefore deduce that for such primes $p$, $O^1$, $\mathcal{M}^1$ and $\mathcal{N}^1$ are dense in $\SL(2,\mathbb{Z}_p)$.

Since $\mathcal{P}$ is a degree one prime, $\SL(2,\mathbb{Z}[i]_{\mathcal{P}})\cong \SL(2,\mathbb{Z}_p)$, and one checks that the image of $F_D,~F_{D,1}$ and $F_{D,3}$ in $\SL(2,\mathbb{Z}[i]_{\mathcal{P}})$ under the inclusion map
$\SL(2,\mathbb{Z}[i])\hookrightarrow \SL(2,\mathbb{Z}[i]_{\mathcal{P}})$ coincides with that of $O^1$, $\mathcal{M}^1$ and $\mathcal{N}^1$ described above. We deduce from this that
the groups $F_D,~F_{D,1}$ and $F_{D,3}$ surject $\SL(2,\mathbb{Z}[i]/\mathcal{P})\cong \SL(2,\mathbb{Z}/p\mathbb{Z})$. From the remarks above, the same holds for $F_{D,2}$ and the result now follows.
\end{proof}

We now commence with the proof of Theorem \ref{main2}. We deal with the split prime case first. It suffices to only consider the case of the prime ideal $\mathcal{P}$ since $\mathbb{H}^3/\Gamma(\mathcal{P})$ and
$\mathbb{H}^3/\Gamma(\overline{\mathcal{P}})$ differ by an orientation-reversing isometry.

Thus, by way of contradiction, assume that a circle $\mathcal{C}$ corresponds to a closed embedded totally geodesic surface $S$ in $\Gamma(\mathcal{P})$.  From Theorem \ref{classi}, $\mathcal{C}$ is $\Gamma_1$-equivalent to one of
the circles $C_{D,*}$, and so let $\gamma \in \Gamma_1$ be chosen so that
$\gamma \mathcal{C}=\mathcal{C}_{D,*}$.

Then the subgroup $G = \Stab(\mathcal{C},\Gamma_1)$ is given by $G=\gamma^{-1}F_{D,*}\gamma$. Also, by Lemma \ref{lem12}, $\begin{pmatrix}1 & 1 \\ 0 &1 \end{pmatrix} \gamma \mathcal{C} \cap \gamma \mathcal{C}$ is nontrivial, meaning that $\gamma^{-1 }\begin{pmatrix}1 & 1 \\ 0 &1 \end{pmatrix} \gamma \mathcal{C} \cap \mathcal{C}$ is nontrivial. From these, we deduce that any element in
\[
G\gamma^{-1 }\begin{pmatrix}1 & 1 \\ 0 &1 \end{pmatrix} \gamma  G=\gamma^{-1 }F_{D,*}\begin{pmatrix}1 & 1 \\ 0 &1 \end{pmatrix}F_{D,*} \gamma
\]
sends $\mathcal{C}$ to a circle that intersects $\mathcal{C}$ nontrivially.

Now consider the image of $G$ in $\PSL(2,\mathbb{Z}[i]/\mathcal{P})$. From above this coincides with the image of
$\gamma^{-1}F_{D,*}\gamma$. Hence, if $D$ is coprime to $p$, Lemma \ref{lem22} shows that the image of $G$ is $\PSL(2,\mathbb{Z}[i]/\mathcal{P})$.  Putting these remarks together, it follows that
 \[
 G \gamma^{-1 }\begin{pmatrix}1 & 1 \\ 0 &1 \end{pmatrix} \gamma  G
\]
contains an element in $\Gamma(\mathcal{P})$, which contradicts  the assumption $S$ being embedded.  Therefore $p$ must divide $D$.

Assuming this is the case, apply Lemma \ref{lem:para} with
\[
\gamma = \begin{pmatrix}1-txy & ty^2 \\ -tx^2 & 1+txy\end{pmatrix} \in \Gamma(\mathcal{P}),
\]
where $t=\pi$, a generator for the prime ideal $\mathcal{P}$. Hence,
\[
\mathrm{Tr}(\gamma^*A\gamma A^{-1}) = 2-\frac{p Q_A(-\bar{x},\bar{y})^2 }{D}.
\]
Now using Remark \ref{hermitian_represent} there exist $x,y \in \mathbb{Z}[i]$ such that
\[
 Q_A(-\bar{x},\bar{y})^2 \leq 4.
\]
For such $x,y$ we have
\[
2>\mathrm{Tr}(\gamma^*A\gamma A^{-1}) = 2-\frac{p Q_A(-\bar{x},\bar{y})^2 }{D} \geq 2-4=-2
\]
where the equality holds only if $p=D$. Now $p$ splits to $\mathbb{Q}(i)$ and so is a sum of two squares. However, because $S$ is a closed surface, this cannot happen by Lemma \ref{sumofsquares}.

Therefore we have for $x,y$ as above,
\[
\left|\mathrm{Tr}(\gamma^*A\gamma A^{-1})\right| <2,
\]
which is a contradiction to the assumption that $S$ is embedded, by Corollary \ref{cor_embed}.\\[\baselineskip]
\noindent We now handle the case of $\Gamma(1+i)$.  Perhaps the simplest way to deal with this case is topologically as follows.  The fundamental group $B$ of the complement of the Borromean rings in $S^3$ is an alternating arithmetic link complement covering $\Omega_1$ (see \cite{Ha} for example).  Moreover, \cite{FN} proves that $B$ can be realized as a normal subgroup of $\Gamma_1$ of index $24$ arising as the normal closure in $\Gamma_1$ of the set
$\{\begin{pmatrix}1 & 4\\ 0 & 1\end{pmatrix}, \begin{pmatrix}1 & 1+i\\ 0 & 1\end{pmatrix}\}$. These elements
are visibly in $\Gamma(1+i)$, and so $\Gamma(1+i)$ being a normal subgroup, contains this normal closure; i.e. $B < \Gamma(1+i)$. As discussed in \S \ref{links}, it follows from \cite{Men} that $\mathbb{H}^3/B$ cannot contain a closed embedded
totally geodesic surface, hence the same holds for $\mathbb{H}^3/\Gamma(1+i)$.\qed
\begin{remark} It is known that $\mathbb{H}^3/\Gamma(2)$ is also homeomorphic to a link complement in $S^3$ \cite{Ba0}, and also cannot contain a closed embedded totally geodesic surface (using \cite{Oe}). However, the methods of this
paper shows that $\mathbb{H}^3/\Gamma((1+i)^3)$ (homeomorphic to the complement of a link of $12$ components \cite{BGR}, \cite{BGRa}) does contain a closed embedded totally geodesic surface (of genus $3$).  This was also known to M. Stover.\end{remark}

\subsection{The case of $\Gamma_{1,0}(n)$}
\label{someothers}
In this subsection we construct closed embedded totally geodesic surfaces in $\mathbb{H}^3/\Gamma_{1,0}(n)$, for certain $n \in  \mathbb{Z}$ (some of this was also noticed by M. Stover).
For convenience, we abbreviate $\Gamma_{1,0}(n)$ to $\Gamma_0(n)$.

Recall that a result of Fermat shows that an integer $n$ expressed as $2^a\prod p^b\prod q^c$ where the products are over primes $p\equiv 1\pmod{4}$ and $q\equiv 3\pmod{4}$ is a sum of two squares (one of which may be zero) if and only if
all the exponents $c$ are even.

\begin{proposition}
Assume that $n\equiv 1 \pmod{4}$ is not a sum of two squares. Then $\mathbb{H}^3/\Gamma_0(n)$ contains a closed embedded totally geodesic surface.
\end{proposition}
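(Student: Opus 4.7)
The plan is as follows. Since $n \equiv 1 \pmod 4$, Theorem \ref{classi} yields the circle $\mathcal{C}_{n,1}:\, 2|z|^2 + z + \bar z - (n-1)/2 = 0$ with Hermitian matrix $A_{n,1}$ and discriminant $D = n$. By Lemma \ref{sumofsquares}, the hypothesis that $n$ is not a sum of two squares guarantees that the associated immersed totally geodesic surface $\mathcal{S}_{n,1} \subset \Omega_1$ is closed. I will produce an embedded lift of $\mathcal{S}_{n,1}$ in the cover $\mathbb{H}^3/\Gamma_0(n) \to \Omega_1$.

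The naive lift via $\mathcal{C}_{n,1}$ itself fails to embed, because the parabolic $\begin{pmatrix}1 & 1 \\ 0 & 1\end{pmatrix}$ lies in $\Gamma_0(n)$ and translates $\mathcal{C}_{n,1}$ (whose Euclidean radius is $\sqrt{n}/2 > 1/2$) to a distinct intersecting circle. To remedy this, I take the lift corresponding to the conjugate circle $\mathcal{C}' = \gamma'(\mathcal{C}_{n,1})$ for the inversion $\gamma' = \begin{pmatrix} 0 & -1 \\ 1 & 0 \end{pmatrix} \in \Gamma_1 \setminus \Gamma_0(n)$; a direct computation produces the Hermitian matrix $A' = \begin{pmatrix}(n-1)/2 & 1 \\ 1 & -2\end{pmatrix}$ (up to sign), whose associated circle has Euclidean radius $2\sqrt{n}/(n-1)$. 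This radius is strictly less than $1/2$ precisely when $(n-1)^2 \geq 16n$, that is for $n \geq 18$; since the hypotheses force $n \geq 21$, this is automatic.

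By Corollary \ref{cor_embed}, embedding of the surface associated to $A'$ in $\mathbb{H}^3/\Gamma_0(n)$ is equivalent to $|\mathrm{Tr}(\gamma^* A' \gamma (A')^{-1})| \geq 2$ for every $\gamma \in \Gamma_0(n)$. I would write $\gamma = \begin{pmatrix} x & y \\ nz_0 & w \end{pmatrix}$ with $xw - nyz_0 = 1$ and expand the trace. For parabolic $\gamma$ fixing $\infty$, Lemma \ref{lem:para} reduces the bound to $|m|^2 Q_{A'}(1,0)^2 \geq 4D$ for all nonzero $m \in \mathbb{Z}[i]$, which follows directly from the size estimate above. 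For parabolic $\gamma$ at the other $\Gamma_0(n)$-cusps, the corresponding values of $Q_{A'}$ grow with $n$ and the bound is easy. The remaining cases are controlled by the key arithmetic input that the indefinite form $Q(u, v) = |u|^2 - n|v|^2$ on $\mathbb{Z}[i]^2$ has no nontrivial zero: a nontrivial zero $Q(u,v) = 0$ with $v \neq 0$ would exhibit $n = |u/v|^2$ as the norm of a Gaussian integer, hence as a sum of two rational and then two integer squares, contradicting the hypothesis.

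The main obstacle is converting the qualitative non-representability of $0$ by $Q$ over $\mathbb{Z}[i]$ into the quantitative exclusion of $\mathrm{Tr} \in (-2, 2)$ for \emph{all} $\gamma \in \Gamma_0(n)$, not merely the parabolic ones. The plan is to combine the divisibility $n \mid z$ with the determinant relation $xw - nyz_0 = 1$ to force the lattice points at which $Q$ (or an allied form coming from the expansion of $\mathrm{Tr}(\gamma^* A' \gamma (A')^{-1}) \pm 2$) is evaluated to avoid the finitely many small nonzero values of $Q$; the non-representability of $0$, together with the parity and modular restrictions coming from $n \equiv 1 \pmod 4$, is precisely what rules these small values out.
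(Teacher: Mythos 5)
Your setup (discriminant $n$, closedness via Lemma \ref{sumofsquares}, the observation that the standard representative $\mathcal{C}_{n,1}$ cannot embed because of the parabolic $\begin{pmatrix}1&1\\0&1\end{pmatrix}\in\Gamma_0(n)$) is on the right track, but the specific lift you choose does not work, and the rest of the argument is only a plan. Concretely, your circle $\mathcal{C}'$ with matrix $A'=\begin{pmatrix}(n-1)/2&1\\1&-2\end{pmatrix}$ is destroyed by the order-two element $\gamma=\begin{pmatrix}i&0\\0&-i\end{pmatrix}\in\Gamma_0(n)$ (lower-left entry $0$, so it lies in $\Gamma_0(n)$ for every $n$): one computes $\gamma^*A'\gamma=\begin{pmatrix}(n-1)/2&-1\\-1&-2\end{pmatrix}$, which is a genuinely different circle, and
\[
\mathrm{Tr}\bigl(\gamma^*A'\gamma\,(A')^{-1}\bigr)=2-\tfrac{4}{n}\in(-2,2),
\]
so by Corollary \ref{cor_embed} the surface self-intersects. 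Geometrically this is transparent: $\gamma$ acts by $z\mapsto -z$, and $\mathcal{C}'$ (center $-2/(n-1)$, radius $2\sqrt{n}/(n-1)$) contains the origin in its interior, so it must meet its reflection through the origin. Controlling the radius only guards against the translation parabolics; the finite-order upper-triangular elements of $\Gamma_0(n)$ are just as dangerous, and your circle fails against them. Beyond this, the step you label ``the main obstacle''---excluding $\mathrm{Tr}\in(-2,2)$ for \emph{all} $\gamma\in\Gamma_0(n)$---is never actually carried out, and the proposed mechanism (non-representability of $0$ by $|u|^2-n|v|^2$) is qualitative and does not by itself prevent the relevant integer from landing in the forbidden range.

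The paper avoids both problems by choosing a different $\Gamma_1$-translate of the same immersed surface, namely $A=\begin{pmatrix}n(n-1)/2&n\\n&2\end{pmatrix}$ (this is $\delta^*A_{n,1}\delta$ for $\delta=\begin{pmatrix}(n+1)/2&1\\-1&0\end{pmatrix}$), whose key feature is the divisibility $n\mid a$ and $n\mid B$. For $\gamma\in\Gamma_0(n)$ this forces $\mathrm{Tr}(\gamma^*A\gamma\tilde{A})\equiv 0\pmod{n}$ with $\tilde{A}$ the adjugate, while a separate parity computation using $2\mid Q_A$ and $\det\gamma=1$ gives $\equiv 2\pmod 4$; since $n\equiv 1\pmod 4$ these combine to $\equiv -2n\pmod{4n}$, hence $|\mathrm{Tr}(\gamma^*A\gamma A^{-1})|\geq 2$ uniformly, with no case analysis on $\gamma$. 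If you want to salvage your approach, the lesson is that the representative must be chosen so that the trace collapses modulo $n$ \emph{and} modulo $4$ for every element of $\Gamma_0(n)$ at once; radius estimates and parabolic-only checks cannot substitute for this.
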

\begin{proof}
We claim that the Hermitian matrix
\[
A= \begin{pmatrix}\frac{n(n-1)}{2} & n \\ n & 2\end{pmatrix}
\]
corresponds to a closed embedded totally geodesic surface $S$ in $\mathbb{H}^3/\Gamma_0(n)$. To that end, note that the discriminant of $S$ is $n$ and so Lemma \ref{sumofsquares} shows that $S$ is closed. Let
\[
\tilde{A}  = A \begin{pmatrix}n & 0 \\ 0 & n\end{pmatrix} = \begin{pmatrix}2 &- n \\ -n & \frac{n(n-1)}{2}\end{pmatrix},
\]
and compute its trace modulo $n$:
\begin{align*}
\mathrm{Tr} \left( \gamma^* A \gamma \tilde{A}\right) &\equiv \mathrm{Tr} \left( \begin{pmatrix}\bar{x} & 0 \\ \bar{y} & \bar{w}\end{pmatrix} \begin{pmatrix}0 & 0 \\ 0 & 2\end{pmatrix} \begin{pmatrix}x & y \\ 0 & w\end{pmatrix} \begin{pmatrix}2 & 0 \\ 0 & 0\end{pmatrix}\right)\pmod{n}\\
&\equiv 0 \pmod{n}.
\end{align*}
Observe that $Q_A(x,y)$ is always an even number, from which we have:
\begin{align*}
\mathrm{Tr} \left( \gamma^* A \gamma \tilde{A}\right)& = 2Q_A(\bar{x},\bar{z}) - 2\mathrm{Re}\left(n \begin{pmatrix}\bar{x} & \bar{z}\end{pmatrix} \begin{pmatrix}\frac{n(n-1)}{2} & n \\ n & 2\end{pmatrix} \begin{pmatrix}y \\ w\end{pmatrix} \right)+ \frac{n(n-1)}{2}Q_A(\bar{y},\bar{w}) \\
&\equiv -2 \mathrm{Re}( \bar{x}w + \bar{z}y) \pmod{4}\\
&\equiv 2 \mathrm{Re}( xw - zy) \pmod{4}\\
&=2.
\end{align*}
Therefore
\[
\mathrm{Tr} \left( \gamma^* A \gamma \tilde{A}\right) \equiv -2n \pmod{4n},
\]
and so
\[
\left|\mathrm{Tr} \left( \gamma^* A \gamma A^{-1}\right)\right|\geq 2.
\]
Therefore $S$ is embedded, by Corollary \ref{cor_embed}.
\end{proof}
\begin{proposition}
Assume that $n \equiv 3 \pmod{4}$ is not a sum of two squares. Then $\mathbb{H}^3/\Gamma_0(2n)$ contains a closed embedded totally geodesic surface.
\end{proposition}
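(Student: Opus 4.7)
The plan is to construct an explicit primitive binary Hermitian form whose associated closed totally geodesic surface is embedded in $\mathbb{H}^3/\Gamma_0(2n)$. I would take $A = \begin{pmatrix} n(n-1) & n \\ n & 1 \end{pmatrix}$, which has discriminant $D = n^2 - n(n-1) = n$. Since $n$ is not a sum of two squares by hypothesis, Lemma \ref{sumofsquares} shows the corresponding surface is closed. It then remains to verify via Corollary \ref{cor_embed} that $|\mathrm{Tr}(\gamma^* A \gamma A^{-1})| \geq 2$ for every $\gamma = \begin{pmatrix} x & y \\ z & w\end{pmatrix} \in \Gamma_0(2n)$; equivalently, with $\tilde A = -D A^{-1} = \begin{pmatrix} 1 & -n \\ -n & n(n-1) \end{pmatrix}$, that $|\mathrm{Tr}(\gamma^* A \gamma \tilde A)| \geq 2n$.

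The strategy is to compute this trace modulo $n$ and modulo $4$ and then combine via the Chinese Remainder Theorem. Modulo $n$, the condition $z \equiv 0 \pmod n$ together with the complementary block reductions $A \equiv \begin{pmatrix} 0 & 0 \\ 0 & 1\end{pmatrix}$ and $\tilde A \equiv \begin{pmatrix} 1 & 0 \\ 0 & 0 \end{pmatrix} \pmod n$ yields $\mathrm{Tr}(\gamma^* A \gamma \tilde A) \equiv 0 \pmod n$ by the same block-multiplication computation used in the previous proposition. Modulo $4$, I would expand the trace as $c\, Q_A(\bar x, \bar z) + a\, Q_A(\bar y, \bar w) - 2\mathrm{Re}(B \bar{B'})$, where $(a, B, c) = (n(n-1), n, 1)$ and $B'$ is the $(1,2)$-entry of $\gamma^* A \gamma$; under $n \equiv 3 \pmod 4$ the coefficients $n(n-1)$, $n^2(n-1)$, and $n(n-1)^2$ all reduce modulo $4$ in a controlled way ($2$, $2$, $0$ respectively), and under $2n \mid z$ every term containing $|z|^2$ or a factor of $2n\cdot\mathrm{Re}(z\,\cdot\,)$ vanishes modulo $4$. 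What remains should simplify to
\[
\mathrm{Tr}(\gamma^* A \gamma \tilde A) \equiv 2\bigl(|x|^2 + |w|^2 + \mathrm{Re}(x\bar w)\bigr) \pmod 4.
\]

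The crucial final step is a parity analysis of $|x|^2 + |w|^2 + \mathrm{Re}(x\bar w)$. Writing $x = x_1 + ix_2$, $w = w_1 + iw_2$ and using $a^2 \equiv a \pmod 2$, this expression reduces modulo $2$ to $x_1 + x_2 + w_1 + w_2 + x_1 w_1 + x_2 w_2$. Since $\gamma \in \Gamma_0(2n)$ forces $z$ to be even in $\mathbb{Z}[i]$, the relation $xw - yz = 1$ forces $xw \equiv 1 \pmod 2$, and because the group of units of $\mathbb{Z}[i]/(2)$ is $\{1,i\}$, the pair $(x \bmod 2,\ w \bmod 2)$ must equal $(1,1)$ or $(i,i)$. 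In either case inspection of real and imaginary parts shows $x_1 + x_2 + w_1 + w_2 \equiv 0 \pmod 2$ and $x_1 w_1 + x_2 w_2 \equiv 1 \pmod 2$, so $|x|^2 + |w|^2 + \mathrm{Re}(x\bar w) \equiv 1 \pmod 2$. Hence $\mathrm{Tr}(\gamma^* A \gamma \tilde A) \equiv 2 \pmod 4$; combining with $\equiv 0 \pmod n$ via CRT (using $n \equiv 3 \pmod 4$, so the unique compatible residue class modulo $4n$ is $2n$) yields $\mathrm{Tr}(\gamma^* A \gamma \tilde A) \equiv 2n \pmod{4n}$, and in particular $|\mathrm{Tr}(\gamma^* A \gamma \tilde A)| \geq 2n$, which is exactly what Corollary \ref{cor_embed} requires.

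The main obstacle is the modulo-$4$ expansion together with the parity step. The trace is a fairly long quadratic expression in $x, y, z, w$ with coefficients involving $n$, $n-1$, and their squares and products, and verifying that everything collapses to $2(|x|^2 + |w|^2 + \mathrm{Re}(x\bar w)) \pmod 4$ requires patient $2$-adic bookkeeping. It is precisely this parity step that forces the level to be $2n$ rather than $n$: for $\gamma \in \Gamma_0(n)$ with $n$ odd, $z$ need not be even, and then $xw \equiv 1 \pmod 2$ can fail, breaking both the parity argument and (as one can verify numerically, e.g.\ for $n = 3$ and small $\gamma$) the embedding condition itself.
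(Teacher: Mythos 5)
Your proposal is correct and follows exactly the paper's approach: the paper uses the same matrix $A=\begin{pmatrix}n(n-1)&n\\ n&1\end{pmatrix}$ and simply says ``apply the same argument'' as in the $n\equiv 1\pmod 4$ case to conclude $\mathrm{Tr}(\gamma^*A\gamma\tilde A)\equiv -2n\pmod{4n}$ (which is the same residue class as your $2n$). Your mod~$4$ reduction to $2(|x|^2+|w|^2+\mathrm{Re}(x\bar w))$ and the parity analysis via $xw\equiv 1$ in $\mathbb{Z}[i]/(2)$ correctly supply the details the paper omits, including the point that the evenness of $z$ is exactly what forces the level $2n$.
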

\begin{proof}
We apply the same argument to
\[
A= \begin{pmatrix}n(n-1) & n \\ n & 1\end{pmatrix}.
\]
from which we infer
\[
\mathrm{Tr} \left( \gamma^* A \gamma \tilde{A}\right) \equiv -2n \pmod{4n}.
\]
Then we apply Corollary \ref{cor_embed}.
\end{proof}
We combine these two to conclude the following:
\begin{corollary}
In the following cases, $\mathbb{H}^3/\Gamma_0(n)$ contains a closed embedded totally geodesic surface
\begin{enumerate}
\item $n$ is even and $n$ has a prime factor $\equiv 3 \pmod{4}$, or
\item $n$ has two distinct prime factors $\equiv 3 \pmod{4}$.
\end{enumerate}
\end{corollary}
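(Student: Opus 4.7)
The plan is to reduce both cases to the two preceding propositions by means of an elementary covering argument. Whenever $d \mid n$, the inclusion $\Gamma_0(n) \subset \Gamma_0(d)$ induces a finite orbifold covering $\pi\colon \mathbb{H}^3/\Gamma_0(n) \to \mathbb{H}^3/\Gamma_0(d)$, and the $\pi$-preimage of an embedded closed totally geodesic surface is again a disjoint union of embedded closed totally geodesic surfaces (preimages of embedded suborbifolds under a covering are embedded). Hence it suffices, in each case, to exhibit a divisor $d$ of $n$ for which one of the two preceding propositions already produces a closed embedded totally geodesic surface in $\mathbb{H}^3/\Gamma_0(d)$.

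For case (1), suppose $n$ is even and has an odd prime factor $q \equiv 3 \pmod{4}$, and take $d = 2q$, which divides $n$. A prime $\equiv 3 \pmod{4}$ is never a sum of two squares, by Fermat's characterization recalled just before the preceding propositions. The second proposition, applied with its ``$n$'' replaced by $q$, therefore yields a closed embedded totally geodesic surface in $\mathbb{H}^3/\Gamma_0(2q)$, which pulls back through $\pi$ to the desired surface in $\mathbb{H}^3/\Gamma_0(n)$.

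For case (2), suppose $n$ has two distinct prime factors $p,q \equiv 3 \pmod{4}$, and take $d = pq$. Then $d \equiv 1 \pmod{4}$, and by Fermat's criterion $d$ is not a sum of two squares, since both $p$ and $q$ occur to odd (indeed, first) exponent in its factorization. The first proposition, applied with its ``$n$'' replaced by $pq$, then produces a closed embedded totally geodesic surface in $\mathbb{H}^3/\Gamma_0(pq)$, which again pulls back under $\pi$ to $\mathbb{H}^3/\Gamma_0(n)$.

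No serious obstacle presents itself: the substantive work has already been done in the two preceding propositions, and the corollary merely packages the arithmetic hypotheses on $n$ so as to guarantee the existence of a divisor to which one of those propositions applies. The only point meriting care is the covering principle itself, but this is standard, and the preimage of an embedded totally geodesic suborbifold under the finite orbifold cover $\pi$ is embedded for the usual local-homeomorphism reason.
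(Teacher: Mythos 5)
Your proof is correct and follows essentially the same route as the paper, which likewise reduces the corollary to the two preceding propositions via the observation that if $n\mid m$ then the covering $\mathbb{H}^3/\Gamma_0(m)\to\mathbb{H}^3/\Gamma_0(n)$ pulls back closed embedded totally geodesic surfaces to closed embedded totally geodesic surfaces. Your choices of divisors ($2q$ in case (1), $pq$ in case (2)) and the appeal to Fermat's two-squares criterion match the intended argument; you have simply written out the details the paper leaves implicit.
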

\begin{proof}
If $n|m$ and $\mathbb{H}^3/\Gamma_0(n)$ contains a closed embedded totally geodesic surface, then so does $\mathbb{H}^3/\Gamma_0(m)$.
\end{proof}

\newpage
\appendix

\section{Exceptional discriminants}
We define $\mathcal{E}_1$ to be the
list of $d$'s in the following table. Here $Cl_{\mathbb{Q}(\sqrt{-d})}$ denote the ideal class group of $\mathbb{Q}(\sqrt{-d})$ and $h_{-d}$ the class number. We only indicate the structure of the ideal class group when $h_{-d}$
is divisible by $4$.

\[
\begin{array}{ | c | c | c | c | c | c | c | }
\hline
	d & h_{-d} & Cl_{\mathbb{Q}(\sqrt{-d})} &&	d & h_{-d} & Cl_{\mathbb{Q}(\sqrt{-d})}\\ \hline
	1& 1& &&	215& 14&   \\
	2& 1& &&	231& 12& \mathbb{Z}_6\times \mathbb{Z}_2  \\
	3& 1& &&	239& 15&  \\
	5& 2&  &&	255& 12& \mathbb{Z}_6\times \mathbb{Z}_2 \\
	6& 2& &&	287& 14&   \\
	7& 1&  &&	311& 19&  \\
	10& 2&  &&	335& 18&  \\
	11& 1& &&	359& 19&  \\
	15& 2& &&	455& 20& \mathbb{Z}_{10}\times \mathbb{Z}_2 \\
	19& 1& &&	479& 25& \\
	21& 4& \mathbb{Z}_2\times \mathbb{Z}_2&&	519& 18&  \\
	23& 3&  &&	551& 26&  \\
	26& 6& &&	591& 22&  \\
	30& 4& \mathbb{Z}_2\times \mathbb{Z}_2&&	615& 20& \mathbb{Z}_{10}\times \mathbb{Z}_2 \\
	31& 3&  &&	671& 30& \\
	33& 4& \mathbb{Z}_2\times \mathbb{Z}_2 &&	719& 31&\\
	35& 2& &&	815& 30&  \\
	42& 4& \mathbb{Z}_2\times \mathbb{Z}_2 &&	935& 28& \mathbb{Z}_{14}\times \mathbb{Z}_2 \\
	47& 5& &&	951& 26&  \\
	51& 2& &&	1095& 28& \mathbb{Z}_{14}\times \mathbb{Z}_2  \\
	59& 3& &&	1335& 28& \mathbb{Z}_{14}\times \mathbb{Z}_2  \\
	71& 7& &&	1455& 28& \mathbb{Z}_{14}\times \mathbb{Z}_2 \\
	79& 5& &&	1599& 36& \mathbb{Z}_{18}\times \mathbb{Z}_2  \\
	87& 6& &&	2015& 52& \mathbb{Z}_{26}\times \mathbb{Z}_2 \\
	105& 8& \mathbb{Z}_2\times \mathbb{Z}_2\times \mathbb{Z}_2&&	2415& 40& \mathbb{Z}_{10}\times \mathbb{Z}_2\times \mathbb{Z}_2 \\
	119& 10& &&	2679& 52& \mathbb{Z}_{26}\times \mathbb{Z}_2  \\
	131& 5&&&	3135& 40& \mathbb{Z}_{10}\times \mathbb{Z}_2\times \mathbb{Z}_2  \\
	143& 10& &&	3255& 40& \mathbb{Z}_{10}\times \mathbb{Z}_2\times \mathbb{Z}_2  \\
	159& 10& &&	3471& 60& \mathbb{Z}_{30}\times \mathbb{Z}_2  \\
	167& 11& &&	8151& 88& \mathbb{Z}_{22}\times \mathbb{Z}_2\times \mathbb{Z}_2  \\
	191& 13& &&	10335& 88& \mathbb{Z}_{22}\times \mathbb{Z}_2\times \mathbb{Z}_2  \\
	195& 4& \mathbb{Z}_2\times \mathbb{Z}_2 &&&&\\
\hline
\end{array}
\]
and define $\mathcal{E}_2$ to be
\begin{multline*}
\{14,~17,~34,~39,~41,~55,~66,~69,~95,~111,~183,~285,~327,~399,~471,\\
~759,~791,~831,~959,~1055,~1119,~1239,~1551,~3311,~4199,~15015\}.
\end{multline*}
Let $\mathcal{E}=\mathcal{E}_1 \cup \mathcal{E}_2$.
\begin{proposition}
If $\Omega_d$ has no embedded closed totally geodesic surfaces, then $d \in \mathcal{E}$ or $d>10^9$.
\end{proposition}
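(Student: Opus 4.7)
The plan is to argue the contrapositive: for squarefree $d \leq 10^9$ with $d \notin \mathcal{E}$, I will exhibit a closed embedded totally geodesic surface in $\Omega_d$, so the non-existence hypothesis forces $d \in \mathcal{E}$ or $d > 10^9$. By construction, $\mathcal{E}$ is precisely the list of $d \leq 10^9$ for which the set $\mathcal{D}$ defined in \eqref{D} is empty; hence $d \leq 10^9$ and $d \notin \mathcal{E}$ immediately produce some $r \in \mathcal{D}$, satisfying $0 < r < d/4$, $r$ a square modulo $d$, and $r \neq \alpha^2 + d\beta^2$ for any $\alpha,\beta \in \mathbb{Q}$.

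Given such an $r$, the next step is to promote it to a parameter pair $(m,c)$ fitting the hypotheses of Proposition \ref{thereis}. I would pick an integer $m$ with $m^2 \equiv r \pmod{d}$ and $0 < m < d/2$; this is possible because any square root of $r$ modulo $d$ can be reflected to lie in $(0,d/2)$, and the endpoints $m = 0$ and $m = d/2$ are excluded by the combination of $0 < r < d/4$ and squarefreeness of $d$. Setting $c = (m^2-r)/d \in \mathbb{Z}$ then gives $m^2 - dc = r \in (0, d/4)$, so Proposition \ref{thereis} directly produces an orientable embedded totally geodesic surface $S \subset \Omega_d$ of discriminant $D = d(m^2 - dc) = dr$.

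It remains to verify that $S$ is closed. By Lemma \ref{straight_line}(2), this amounts to showing that $D = dr$ is not of the form $\alpha^2 + d\beta^2$ for any $\alpha,\beta \in \mathbb{Q}$. A short manipulation, clearing denominators to common form and using squarefreeness of $d$ to divide out a factor of $d$ from the resulting integer numerator, shows that $dr$ admits such a representation over $\mathbb{Q}$ if and only if $r$ itself does; the latter is excluded by $r \in \mathcal{D}$, so $S$ is indeed closed. The substantive work has already been absorbed into Proposition \ref{thereis} and into the explicit tabulation of $\mathcal{E}$ in the appendix, so no genuine obstacle arises in the argument itself; the only point warranting care is the elementary equivalence between the closedness condition for discriminant $dr$ and the defining non-representability condition for $r \in \mathcal{D}$, sketched above.
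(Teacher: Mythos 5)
Your argument is correct and is essentially the paper's: the paper's own proof of this proposition is nothing but the Mathematica verification that for squarefree $d\le 10^9$ outside the tabulated set $\mathcal{E}$ the set $\mathcal{D}$ of \eqref{D} is non-empty, and the deduction you supply from $\mathcal{D}\neq\emptyset$ to the existence of a closed embedded surface (lift $r$ to $m$ with $m^2\equiv r\pmod d$ and $0<m<d/2$, apply Proposition \ref{thereis}, and use the equivalence of $dr$ and $r$ being represented by $\alpha^2+d\beta^2$ together with Lemma \ref{straight_line}(2) for closedness) is exactly the construction already carried out in Corollary \ref{lower} and Theorem \ref{theorem_none}. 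The one point to flag is that your opening claim ``by construction, $\mathcal{E}$ is precisely the list of $d\le 10^9$ for which $\mathcal{D}$ is empty'' is not a definition but the computed fact that constitutes the entirety of the paper's proof: $\mathcal{E}$ is defined by the explicit tables in the appendix, and its identification with $\{d\le 10^9:\mathcal{D}=\emptyset\}$ is precisely what the Mathematica code certifies.
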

\begin{proof}
This is checked using Mathematica, and the code can be shared upon request.
\end{proof}
We conjecture that $\mathcal{E}$ is the complete list of $d$ for which $\Gamma_d \backslash \mathbb{H}^3$ has no embedded closed totally geodesic surfaces.
\end{document}